\DeclareFontFamily{U}{dutchcal}{\skewchar\font=45 }
\DeclareFontShape{U}{dutchcal}{m}{n}{<-> s*[1.0] dutchcal-r}{}
\DeclareFontShape{U}{dutchcal}{b}{n}{<-> s*[1.0] dutchcal-b}{}
\DeclareMathAlphabet{\mathlcal}{U}{dutchcal}{m}{n}
\SetMathAlphabet{\mathlcal}{bold}{U}{dutchcal}{b}{n}
\DeclareMathOperator{\sheafhom}{\mathscr{H}\text{\kern -3pt {\calligra\large om}}\,}
\DeclareMathOperator{\sheafend}{\mathscr{E}\text{\kern -3pt {\calligra\large nd}}\,}
\theoremstyle{plain}
\newtheorem{thm}{Theorem}[section]
\newtheorem*{claim}{Claim}
\newtheorem*{conj*}{Conjecture}
\newtheorem*{theorem*}{Theorem}
\theoremstyle{definition}
\newtheorem{definition}[subsubsection]{Definition}
\newtheorem{definition-proposition}[subsubsection]{Definition-Proposition}
\newtheorem{theorem}[subsubsection]{Theorem}
\newtheorem{example}[subsubsection]{Example}
\newtheorem{remark}[subsubsection]{Remark}
\newtheorem{convention}[subsubsection]{Convention}
\newtheorem{proposition}[subsubsection]{Proposition}
\newtheorem*{proposition*}{Proposition}
\newtheorem{lemma}[subsubsection]{Lemma}
\newtheorem{corollary}[subsubsection]{Corollary}
\newtheorem{conjecture}[subsubsection]{Conjecture}
\newcommand{\Abb}{\mathbb{A}}
\newcommand{\Cbb}{\mathbb{C}}
\newcommand{\Gbb}{\mathbb{G}}
\newcommand{\Lbb}{\mathbb{L}}
\newcommand{\Pbb}{\mathbb{P}}
\newcommand{\Qbb}{\mathbb{Q}}
\newcommand{\Rbb}{\mathbb{R}}
\newcommand{\Sbb}{\mathbb{S}}
\newcommand{\Zbb}{\mathbb{Z}}
\newcommand{\Wbb}{\mathbb{W}}
\newcommand{\Mbb}{\mathbb{M}}
\newcommand{\mrm}{\mathrm{m}}
\newcommand{\Gbf}{\mathbf{G}}
\newcommand{\Vbf}{\mathbf{V}}
\newcommand{\gfrak}{\mathfrak{g}}
\newcommand{\GL}{{\mathrm{GL}}}
\newcommand{\Spec}{{\mathrm{Spec}}}
\newcommand{\End}[2]{\mathrm{End}_{#1}(#2)}
\newcommand{\Lie}{\mathrm{Lie}}
\newcommand{\ra}{\rightarrow}
\newcommand{\lra}{\longrightarrow}
\newcommand{\mono}{\hookrightarrow}
\newcommand{\inv}{{-1}}
\newcommand{\bsh}{\backslash}
\newcommand{\isom}{\simeq}
\newcommand{\Sym}{\mathrm{Sym}}
\newcommand{\Hom}{\mathrm{Hom}}
\newcommand{\Res}{\mathrm{Res}}
\newcommand{\id}{\mathrm{id}}
\newcommand{\Bbf}{\mathbf{B}}
\newcommand{\Cbf}{\mathbf{C}}
\newcommand{\Mbf}{\mathbf{M}}
\newcommand{\Nbf}{\mathbf{N}}
\newcommand{\Tbf}{\mathbf{T}}
\newcommand{\Gal}{\mathrm{Gal}}
\newcommand{\Image}{\mathrm{Im}}
\newcommand{\Ocal}{\mathcal{O}}
\newcommand{\Pcal}{\mathcal{P}}
\newcommand{\MT}{\mathbf{MT}}
\newcommand{\Hbf}{\mathbf{H}}
\newcommand{\nfrak}{\mathfrak{n}}
\newcommand{\Pbf}{\mathbf{P}}
\newcommand{\ad}{\mathrm{ad}}
\newcommand{\der}{\mathrm{der}}
\newcommand{\zbar}{\bar{z}}
\newcommand{\Vbb}{\mathbb{V}}
\newcommand{\ibf}{\mathbf{i}}
\newcommand{\Dcal}{\mathcal{D}}
\newcommand{\codim}{\mathrm{codim}}
\newcommand{\Zbf}{\mathbf{Z}}
\newcommand{\HL}{\mathrm{HL}}
\newcommand{\Ad}{\mathrm{Ad}}
\newcommand{\rig}{\mathrm{rig}}
\newcommand{\nonrig}{\mathrm{nonrig}}
\newcommand{\atyp}{\mathrm{atyp}}
\newcommand{\typ}{\mathrm{typ}}
\newcommand{\pos}{\mathrm{pos}}
\newcommand{\factorpos}{\mathrm{f-pos}}
\newcommand{\hfrak}{\mathfrak{h}}
\newcommand{\hodgecd}{\mathrm{H-cd}}
\newcommand{\moncd}{\mathrm{M-cd}}
\newcommand{\SU}{\mathrm{SU}}
\newcommand{\namelabel}[1]{%
  \phantomsection
  \renewcommand{\@currentlabel}{#1}
  \label{#1}
}
\def\paragraph{\@startsection{paragraph}{4}%
  \z@\z@{-\fontdimen2\font}%
  {\normalfont\bfseries}}
\newcommand{\thistheoremname}{}
\newtheorem*{genericthm*}{\thistheoremname}
\newenvironment{namedthm*}[1]{\renewcommand{\thistheoremname}{#1}%
	\begin{genericthm*}}
	{\end{genericthm*}}
\theoremstyle{remark}
\numberwithin{equation}{section}
\def\Z{\mathbb Z}
\def\Spec{\mathrm{Spec}}
\def\log{\mathrm{log}\,}
\def\Spec{\mathrm{Spec}\,}
\begin{document}
\title[On the distribution of non-rigid families]{On the distribution of non-rigid families in the moduli spaces}
\author[Ke Chen]{Ke Chen}
\address{School of Mathematics, Nanjing University}
\email{kechen@nju.edu.cn}

\author[Tianzhi Hu]{Tianzhi Hu}
\address{ School of Mathematics and Statistics, Wuhan University, Luojiashan, Wuchang, Wuhan, Hubei, 430072, P.R. China}
\email{hutianzhi@whu.edu.cn}

 \author{Ruiran Sun}
 \address{School of Mathematical Sciences, Xiamen University, Xiamen 361005, China}
\email{ruiransun@xmu.edu.cn}

\author[Kang Zuo]{Kang Zuo}
\address{ School of Mathematics and Statistics, Wuhan University, Luojiashan, Wuchang, Wuhan, Hubei, 430072, P.R. China; Institut f\"ur Mathematik, Universit\"at Mainz, Mainz, Germany, 55099}
\email{zuok@uni-mainz.de}

\keywords{}
\begin{abstract}
This paper investigates the distribution of non-rigid families in a moduli space $\mathcal{M}$ of polarized projective manifolds for which the infinitesimal Torelli theorem holds. Guided by the analogy with unlikely intersection in Shimura varieties, we show that the image of any non-rigid classifying morphisms into $\mathcal{M}$ is contained in the Hodge locus as long as the derived Mumford-Tate group is $\Qbb$-simple and the period map is generically finite. If moreover the period domain is not Hermitian of rank at least 2, then the Hodge locus can be replaced by a closed subscheme, which yields a finiteness theorem of geometric Bombieri-Lang type.  Inspired by the Zilber-Pink conjecture, we also characterize the geometry of non-rigid locus by the specialness of bi-Hom schemes and the finiteness of ``structurally-atypical'' intersections. Finally, we specialize to the moduli spaces of polarized Calabi-Yau manifolds, formulate an unobstructedness conjecture for non-rigid maps which implies the specialness of bi-Hom schemes, prove a geometric Andr\'e-Oort theorem describing the Zariski closure of non-rigid locus, and test the theory and the conjecture for the explicit Viehweg-Zuo family of Calabi--Yau quintics in $\Pbb^4$.
\end{abstract}

\subjclass[2010]{32H30,14F35}

\maketitle

\setcounter{tocdepth}{2}
\tableofcontents


\setcounter{tocdepth}{1}
\section{Introduction}\label{sec-1}
In their investigation of the Shafarevich conjecture, which concerns the finiteness of non-isotrivial families of genus $g$ curves over a fixed base curve, Parshin \cite{Par} and Arakelov \cite{Ara} separated the finiteness statement into two parts: {\it Boundedness} and {\it Rigidity}. For families of higher-dimensional varieties, Boundedness holds in a fairly general setting, whereas Rigidity fails, with numerous counterexamples.

A natural sequel is to study and classify non-rigid families of varieties. In this paper, we study the distribution of non-rigid families of polarized manifolds in the corresponding moduli space. Let $\mathcal{M}$ be the moduli stack of polarized projective manifolds with semi-ample canonical line bundle and with some fixed Hilbert polynomial. By \cite{Vieh95}, $\mathcal{M}$ admits a quasi-projective coarse moduli scheme $M$. Then for any family $f:\,V \to U$ in $\mathcal{M}(U)$, there is a classifying morphism $\varphi_f:\,U \to M$. We say that the family $f$ {\it lies in a general position of $\mathcal{M}$}, if the image $\varphi_f(U)$ passes through a general point of $M$. This paper is guided by two fundamental questions:

\begin{description}[labelwidth =\widthof{\bfseries9999}, leftmargin = !]
  \item[Question (A)] {\it If a family $f:\,V \to U$ lies at a general position of $\mathcal{M}$, is it rigid?}\\[-3mm]
  \item[Question (B)] {\it Can we characterize the Zariski closure of $\bigcup \varphi_f(U)$ in $M$, where $f:\,V \to U$ ranges over all the non-rigid families in $\mathcal{M}$?}
\end{description}

We denote the set $\bigcup \varphi_f(U)$ in \textbf{Question (B)} by $\mathrm{NRL}(\mathcal{M})$, the {\it Non-rigid Locus of $\mathcal{M}$}. Our investigation of $\mathrm{NRL}(\mathcal{M})$ is carried out under the assumption that the infinitesimal Torelli theorem holds on $\mathcal{M}$, so that the geometry of $\mathcal{M}$ is tightly controlled by the period map associated to the universal family.

\subsection{Previous and motivating results}
In the cases of $K3$ surfaces and abelian varieties, the rigidity or non-rigidity of families can be translated into the rigidity or non-rigidity of the associated period maps, facilitated by Torelli-type theorems and the dominance of period maps. Based on this, Faltings \cite{Fal} provided a condition for the rigidity of families of abelian varieties and verified the finiteness of families satisfying this condition.

Moreover, the moduli spaces $\mathcal{M}_{K3}$ and $\mathcal{A}_g$ are actually {\it Shimura varieties}. Noguchi's theorem then implies that the image $\varphi_f(U)$ of any non-rigid family $f$ is contained in a totally geodesic subvariety of $M=\Gamma\bsh\Dcal$, i.e. a {\it weakly special subvariety} of $M$. Consequently,
\[
\mathrm{NRL}(\mathcal{M}) \subset \{\textrm{weakly special subvarieties}\}
\]
for $\mathcal{M}=\mathcal{A}_g$ or $\mathcal{M}_{K3}$, confirming \textbf{Question (A)} immediately.

On the other hand, the existence of Hecke translates indicates an {\it``all-or-nothing''} principle for non-rigid families in Shimura varieties. In particular, the Zariski closure in \textbf{Question (B)} is simply the whole $M$ when $\mathcal{M}=\mathcal{M}_{K3}$ or $\mathcal{A}_g$. Therefore, \textbf{Question (B)} becomes truly interesting for moduli spaces which are {\it not} Shimura varieties, such as closed subvarieties in some Shimura variety or, more generally, in a Hodge variety (an arithmetic quotient of a period domain). In such settings, the characterization of Zariski closures in \textbf{Question (B)} is intimately related to the Zilber-Pink philosophy of unlikely intersections.

Saito and Zucker \cite{SZ, Saito} further employed Hodge theory to offer a complete classification of non-rigid families in $\mathcal{M}_{K3}$ and $\mathcal{A}_g$. These studies again rely on the dominance of period maps; however, outside the aforementioned cases, period maps are {\it never} dominant. Developing a systematic theory for non-dominant period maps is a central challenge addressed in this paper.

\subsection{Main results}
We now summarize the main theorems and conjectures of this paper, which address Questions (A) and (B) under various assumptions.

\subsubsection{Rigidity and finiteness}
Our first result answers \textbf{Question (A)} under certain assumptions.

\begin{thm}[=Theorem~\ref{thm-very-gen-rig}]\label{intro_very_gen_rig}
  Suppose the universal family $f:\,\mathcal{X} \to \mathcal{M}$ on the moduli stack carries a polarized $\mathbb{Z}$-VHS $\mathbb{V}$ satisfying the following conditions
  \begin{itemize}
    \item[(i)] the derived Mumford-Tate group is $\mathbb Q$-simple;
    \item[(ii)] the associated period map
          \[
        \Phi:\,\mathcal{M} \to \Gamma \backslash \mathcal{D}
          \]
          is generically finite onto its image.
  \end{itemize}
  Then $\mathrm{NRL}(\mathcal{M})$ is contained in $\mathrm{HL}(\mathbb{V})$, the {\rm Hodge locus} of $\mathbb{V}$.
\end{thm}
By the celebrated theorem of Cattani-Deligne-Kaplan \cite{CDK}, $\mathrm{HL}(\mathbb{V})$ is a countable union of closed irreducible algebraic subvarieties. Hence a family whose classifying morphism passes through a {\it very general point} of $\mathcal{M}$ is rigid. We say that $\mathcal{M}$ has the {\bf very general rigidity} property.

Even when very general rigidity holds, $\mathrm{NRL}(\mathcal{M})$ could still be Zariski dense in $M$; for example, this is already the case when we consider all Hecke translates of $\mathcal{A}_1 \times \mathcal{A}_{g-1}$ in $\mathcal{A}_g$. Our second result strengthens Theorem~\ref{intro_very_gen_rig} under a further restriction on the period domain.

\begin{thm}[=Theorem~\ref{thm-generic-rig}]\label{intro_thm-generic-rig}
  Let the moduli space $\mathcal{M}$ and the polarized $\mathbb{Z}$-VHS $\mathbb{V}$ be as in Theorem~\ref{intro_very_gen_rig}. If the Mumford-Tate domain $\mathcal{D}$ of $\mathbb{V}$ is not a bounded symmetric domain of rank $\geq 2$, then $\mathcal{M}$ has the {\bf generic rigidity} property: $\mathrm{NRL}(\mathcal{M})$ is contained in a finite union of closed irreducible algebraic subvarieties.
\end{thm}

Inspired by the Bombieri-Lang Conjecture, we obtain a geometric finiteness result in the spirit of Parshin-Arakelov.

\begin{thm}[=Corollary~\ref{geo-Bombieri-Lang}]\label{intro_geo-Bombieri-Lang}
  Let $\mathcal{M}$ be as in Theorem~\ref{intro_thm-generic-rig}. Assume further that $\mathcal{M}$ is either a fine moduli space or a Deligne-Mumford stack. Then there exists a Zariski open subset $M^o \subset M$ such that for any quasi-projective curve $U$, the set of nonconstant morphisms $\varphi: U \to \mathcal{M}$ with $\varphi(U) \cap M^o \neq \varnothing$ is finite.
\end{thm}
After the release of the first version of our paper on the arXiv, Engel, Lin, and Tayou, in their work \cite{ELT24}, proved a finiteness result for Hodge generic families of smooth projective hypersurfaces of a fixed degree by using a similar approach. For related results, we refer the reader to \S\ref{finiteness_hypersur}.

\subsubsection{Zariski closure of $\mathrm{NRL}(\mathcal{M})$}
We now turn to \textbf{Question (B)}. Guided by the analogy with the Zilber-Pink conjecture for Shimura varieties, we propose a series of conjectures on the structure of $\overline{\mathrm{NRL}(\mathcal{M})}^{\mathrm{Zar}}$.

Let $\Phi: M \hookrightarrow \Gamma\backslash\mathcal{D}$ be a quasi-finite period map. A product subvariety $Y_1\times Y_2\subset M$ obtained from the bi-Hom construction is called a {\it maximal non-rigid family} (Definition~\ref{max_nr}). The following conjecture posits that such maximal products are controlled by Hodge theory.

\begin{conjecture}[Specialness of bi-Hom schemes]\label{introNRLconj}
  Let $M$ be a moduli space satisfying the conditions in Theorem~\ref{intro_very_gen_rig}. For any maximal non-rigid family $H_1 \times H_2 \to M$, both $H_i$ are weakly special subvarieties with respect to the $\mathbb Z$-PVHS of the universal family. 
\end{conjecture}
This can be viewed as an analogue of Moonen's characterization of special subvarieties of Shimura varieties.

To describe the global structure of $\mathrm{NRL}(\mathcal{M})$, we introduce a finiteness conjecture on ``structurally-atypical intersections'':

Let $\Phi: M \hookrightarrow \Gamma\backslash\mathcal{D}$ be the period map above. By an $M$-weakly special subvariety, we mean a weakly special subvariety of $\Gamma\backslash\mathcal{D}$ that is contained in $M$ (Definition \ref{M-ws}).
\begin{conjecture}[Finiteness]\label{introfiniteness_atyp}
  There exists a finite set $\Sigma_{\mathrm{p}} \cup \Sigma_M$, where $\Sigma_{\mathrm{p}}$ consists of weakly special product subvarieties of $\Gamma\backslash\mathcal{D}$ and $\Sigma_M$ consists of $M$-weakly special subvarieties of $\Gamma\backslash\mathcal{D}$, such that for any maximal product subvariety $Y_1\times Y_2\subset M$, its weakly special closure $\langle Y_1\times Y_2\rangle_{\mathrm{ws}}$ either belongs to $\Sigma_{\mathrm{p}}$ or is contained in some $W\in\Sigma_M$. 
\end{conjecture}
 Assuming this finiteness conjecture, we obtain a structural description of $\overline{\mathrm{NRL}(\mathcal{M})}^{\mathrm{Zar}}$.

\begin{theorem}[=Theorem~\ref{condi_proof}, Structure of $\overline{\mathrm{NRL}(\mathcal{M})}^{\mathrm{Zar}}$]\label{introZar_closure}
Assume that Conjecture~\ref{introfiniteness_atyp} holds.
  Then the Zariski closure of the non-rigid locus decomposes as
  \[
  \overline{\mathrm{NRL}(\mathcal{M})}^{\mathrm{Zar}} = \bigcup^n_{i=1} P_i \cup \bigcup^m_{j=1} N_j,
  \]
  where each $P_i$ is a subvariety dominated by a fiber space whose generic fiber is a product variety, and each $N_j$ is birational to a Shimura variety of rank $\geq 2$.
\end{theorem}

We are able to prove a weaker but {\it unconditional} variant of this result, revealing the dichotomy imposed by Hodge theory.

\begin{theorem}[=Theorem~\ref{thm:snrl}, Zariski closure of weakly special non-rigid loci]\label{intro_thm:snrl}Assume the universal family of $\mathscr M$ carrying a polarized $\Z$-VHS with a generic finite period map. Let $$ I:=\{i\mid Y^{(i)}_1\times Y^{(i)}_2\text{ is a maximal product in } \mathscr M\text{ and it is weakly special in }\mathscr M\}.$$
  Let $Z$ be an irreducible component of $\overline{\bigcup_{i\in I}Y_1^{(i)}\times Y_2^{(i)}}^{\mathrm{Zar}}$ and let $\mathbf H_Z$ be the algebraic monodromy group of $\mathbb V|_Z$.
  \begin{enumerate}
    \item[(1)] If $\mathbf H_Z$ is $\mathbb Q$-simple, then $Z$ is birational to a Shimura variety of rank $\geq 2$.
    \item[(2)] If $\mathbf H_Z$ is not $\mathbb Q$-simple, then either $Z$ is a product variety, or there exists a non-trivial fibration $h:Z\to B$ whose fibres contain all special non-rigid loci and share a common algebraic monodromy group $\mathbf N\subsetneq \mathbf H_Z$.
  \end{enumerate}
\end{theorem}
This Theorem \ref{intro_thm:snrl} provides strong evidence for Conjecture~\ref{introZar_closure} and motivates the detailed study of Calabi-Yau moduli spaces in Section~\ref{sec-5}.

\subsection{A dictionary: Shimura varieties vs. moduli spaces and the Zilber-Pink philosophy}
The analogy between Shimura varieties and moduli spaces with a locally injective period map is a central theme of this paper. This dictionary translates deep arithmetic conjectures into geometric statements about moduli spaces and their non-rigid loci, and is particularly useful when viewed through the lens of the Zilber-Pink philosophy of unlikely intersections.

In the classical Shimura context, the Zilber-Pink conjecture predicts that the union of all atypical special subvarieties is not Zariski dense. Here, an atypical subvariety is one whose intersection with a Hodge generic subvariety has larger than expected codimension. This captures the idea that "unlikely intersections" -- intersections that are larger than what a naive dimension count would predict -- should be rare and organized in a precise way. The Andr\'e-Oort conjecture, a special case, predicts that a Zariski dense set of special points forces the ambient variety to be a Shimura variety.

Our work translates this entire philosophy to the setting of moduli spaces with a period map. The non-rigid locus $\mathrm{NRL}(\mathcal{M})$ is precisely the geometric manifestation of unlikely intersections: a maximal product $Y_1\times Y_2$ corresponds to an intersection of the moduli space $M$ with a product of period subdomains that is "structurally atypical" -- intersection preserving the product structure. The following table summarizes this dictionary, which guides our conjectures and theorems.
\footnotesize
\begin{table}[h]\label{tab_dict}
\centering
\begin{tabular}{|p{6.5cm}|p{6.5cm}|}
\hline
\textbf{Shimura variety context} & \textbf{Moduli space context} \\
\hline
$X$: a Shimura variety & $\Gamma \backslash \mathcal{D}$: a Hodge variety (arithmetic quotient of a period domain) \\
\hline
A closed subvariety $V \subset X$ & Moduli space $M$ with locally injective period map into $\Gamma \backslash \mathcal{D}$ \\
\hline
Maximal atypical subvariety & Maximal non-rigid family $Y_1\times Y_2\subset M$ (Definition~\ref{max_nr}) \\
\hline
Weakly special subvariety contained in $V$ & Hodge subvariety contained in $M$ (Definition~\ref{definition hodge locus and hecke orbit}) \\
\hline
Zilber-Pink Conjecture (finiteness of maximal atypical intersections) & Conjecture~\ref{introfiniteness_atyp} (Finiteness of weakly special closures of maximal products) \\
\hline
Moonen's characterization of special subvarieties & Conjecture~\ref{introNRLconj} (Specialness of bi-Hom schemes) \\
\hline
Geometric Andr\'e-Oort Conjecture (Zariski closure of positive dimensional weakly special subvarieties) & Conjecture \ref{introZar_closure} (Dichotomy for components of $\overline{\mathrm{NRL}(\mathcal{M})}^{\mathrm{Zar}}$) \\
\hline
\end{tabular}
\caption{Dictionary between Shimura varieties and moduli spaces.}
\end{table}
\normalsize

The dictionary is not merely formal: it predicts that the structure of $\overline{\mathrm{NRL}(\mathcal{M})}^{\mathrm{Zar}}$ is governed by the same dichotomy that appears in the Zilber-Pink conjecture. Components with $\mathbb Q$-simple monodromy are analogous to Shimura varieties in the sense that they are the "special" building blocks. Components with non-$\mathbb Q$-simple monodromy admit fibrations whose fibres are product-type, reflecting the fact that the underlying atypical intersection arises from a genuine product of Hodge data. Theorem~\ref{intro_thm:snrl} can thus be seen as a geometric incarnation of the Zilber-Pink dichotomy, providing unconditional evidence for the conjectural picture.

\subsection{Applications to Calabi-Yau moduli spaces}
In Section~\ref{sec-5}, we specialize to moduli spaces of polarized Calabi-Yau manifolds. The Hodge theory on Calabi-Yau varieties allows us to go further and provides concrete evidence for our conjectures.

Let $\mathscr M_h$ be a moduli space of polarized Calabi-Yau manifolds with a  fixed Hilbert polynomial $h$. We first formulate an unobstructedness conjecture for non-rigid families, which is crucial for establishing the specialness of bi-Hom schemes.

\begin{conjecture}[Unobstructedness]\label{introunob-conj}
  For a maximal extension of a non-rigid log map $\gamma:(H_1,S_1)\times(H_2,S_2)\to(\overline{\mathscr M}_h,D_\infty)$, the deformations of the restricted maps $(C,S_C)\times\{b\}\to(\overline{\mathscr M}_h,D_\infty)$ and $\{a\}\times(T,S_T)\to(\overline{\mathscr M}_h,D_\infty)$ are unobstructed for all $a\in H_1$, $b\in H_2$.
\end{conjecture}
Note that Conjecture~\ref{introunob-conj} can be regarded as a {\it relative version} of the celebrated Bogomolov-Tian-Todorov theorem. Assuming this conjecture, we prove that bi-Hom schemes are special, confirming Conjecture~\ref{introNRLconj} for Calabi-Yau moduli spaces.

\begin{proposition}[=Proposition~\ref{spe}, Specialness]\label{intro_spe}
  Assume Conjecture~\ref{introunob-conj}. Then for any maximal non-rigid family $H_1\times H_2\subset \mathscr M_h$, the subvariety $H_1\times H_2$ is special with respect to the $\mathbb Z$-PVHS $R^nf_*\mathbb Z_{\mathscr X}$.
\end{proposition}

A highlight of this section is the following geometric Andr\'e-Oort type theorem, which confirms Conjecture~\ref{introZar_closure} for Calabi-Yau moduli spaces.

\begin{theorem}[=Theorem~\ref{CY-AO}, Geometric Andr\'e-Oort for Calabi-Yau moduli]\label{intro_CY-AO}
  Assume $\mathscr M_h$ satisfies the conditions of Theorem~\ref{intro_very_gen_rig} and has a Zariski dense non-rigid locus. Then the period map $\Phi:\mathscr M_h\to\Gamma\backslash\mathcal D$ is dominant onto a Shimura variety.
\end{theorem}

Finally, we provide a detailed verification of our conjectures in the explicit Viehweg-Zuo example \cite{VZ, VZ-06}: a maximal non-rigid family of Calabi-Yau quintics in $\mathbb{P}^4$ obtained from cyclic covers. Through explicit computation of Hodge bundles and monodromy groups, we verify the unobstructedness conjecture and consequently establish the specialness of its bi-Hom scheme:

\begin{proposition}[Viehweg-Zuo example]\label{intro_VZ}
  In the Viehweg-Zuo example, Conjecture~\ref{introunob-conj} holds. Consequently, by Proposition~\ref{intro_spe}, the bi-Hom scheme is special.
\end{proposition}
See \S\ref{sec-VZ-eg} for details.
This provides a concrete and non-trivial test case for our general theory, demonstrating that the conjectural framework is not only consistent but also verifiable in a rich geometric setting.

\subsection{Organization of the paper}
The paper is organized as follows.

\begin{itemize}
    \item \textbf{Section \ref{sec-2} (Preliminaries)}: We recall the basic definitions of Hodge data, Hodge varieties, period maps, and the notions of Hodge locus, weakly special subvarieties, and (a)typicality. We also state the Geometric Zilber-Pink theorem of \cite{BKU} and the refined results of \cite{BU24}, which are essential tools for our analysis.

    \item \textbf{Section \ref{sec-3} (Rigidity and finiteness)}: This section contains our main results on the rigidity of moduli spaces. We introduce the non-rigid locus $\mathrm{NRL}(\mathcal{M})$ and prove the very general rigidity theorem (Theorem~\ref{thm-very-gen-rig}) and the generic rigidity theorem (Theorem~\ref{thm-generic-rig}). Using these, we derive a geometric Bombieri-Lang type finiteness result (Corollary~\ref{geo-Bombieri-Lang}) and give applications to $\overline{\mathcal{M}}_g$ and moduli spaces of hypersurfaces $\mathcal{M}_{d,n}$.

    \item \textbf{Section \ref{sec-4} (Zariski closures of non-rigid loci)}: Here we delve into the structure of the Zariski closure of $\mathrm{NRL}(\mathcal{M})$. We construct maximal non-rigid families using the bi-Hom scheme (Proposition~\ref{produce-max-non-rig}), relate them to non-rigid Hodge subvarieties (Lemma~\ref{lemma special subvarieties of factor type}), and formulate the finiteness and structure conjectures (Conjectures~\ref{finiteness_atyp} and \ref{Zar_closure}). We then prove Theorem~\ref{thm:snrl}, which describes the possible structures of an irreducible component of the closure.

    \item \textbf{Section \ref{sec-5} (Moduli spaces of polarized Calabi-Yau manifolds)}: We specialize to Calabi-Yau moduli. We state the unobstructedness conjecture (Conjecture~\ref{unob-conj}) and prove that under this conjecture, bi-Hom schemes are special (Proposition~\ref{spe}), providing evidence for Conjecture~\ref{NRLconj}. We then prove the geometric Andr\'e-Oort theorem for Calabi-Yau moduli (Theorem~\ref{CY-AO}), which confirms Conjecture~\ref{Zar_closure} in this setting. Finally, we present the Viehweg-Zuo example in detail, verifying the unobstructedness conjecture and thus the specialness of the bi-Hom scheme in this explicit case.
\end{itemize}

\subsection*{Acknowledgment}
The authors express their gratitude to Ngaiming Mok, Bruno Klingler, Xinyi Yuan, and Chenglong Yu for their interest in this work, and to Ariyan Javanpeykar for his valuable comments and suggestions, particularly for the helpful discussions on Deligne-Mumford stacks. R. Sun is grateful to Junyi Xie for very inspiring discussions on the proof of Theorem~\ref{introZar_closure}, and to Haohua Deng for helpful discussions on the Baily--Borel compactification of period images. K. Chen is supported by the National Natural Science Foundation of China Program 12471013, and K. Zuo by the National Natural Science Foundation of China (Key Program 12331002). Part of this work was completed during visits by K. Chen and R. Sun to the School of Mathematics and Statistics at Wuhan University. They extend their thanks to all the members there for their warm hospitality.

\section{Preliminaries}\label{sec-2}
In this section we discuss the distribution of Hodge loci along a period map and collect related materials for the main results. We follow \cite{BKU} closely.

As usual, PVHS stands for polarized variations of Hodge structures. $\Sbb=\Res_{\Cbb/\Rbb}\Gbb_\mrm$ is Deligne's torus, with the weight cocharacter $w: \Gbb_{\mrm,\Rbb}\ra\Sbb$ and the Hodge cocharacter $\mu:\Gbb_{\mrm,\Cbb}\ra\Sbb_\Cbb$. When $k$ is a commutative ring, (linear) $k$-groups always stand for affine algebraic group schemes over $\Spec{k}$.

\subsection{Hodge data and Hodge varieties}

We first recall some notions of Hodge data and Hodge varieties, following \cite{CPM} and \cite{Chen}.

\begin{definition}[Hodge datum and Hodge variety]\label{definition hodge datum and hodge variety} (1) A Hodge datum is a pair $(\Gbf,\Dcal)$ where
	\begin{itemize}
		\item $\Gbf$ is a connected reductive $\Qbb$-group;

		\item $\Dcal$ is the $\Gbf(\Rbb)$-conjugacy class of some $\Rbb$-group homomorphism $h: \Sbb\ra\Gbf_\Rbb$ such that \begin{itemize}
			\item $w\circ h: \Gbb_{\mrm,\Rbb}\ra\Gbf_\Rbb$ is central and defined over $\Qbb$;
			\item the conjugation by $h(\sqrt{-1})$ induces a Cartan involution of $\Gbf^\ad_\Rbb$, and $\Gbf^\ad$ admits NO compact factors defined over $\Qbb$.
		\end{itemize}

	\end{itemize}
	Take a compact open subgroup $K\subset\Gbf(\Abb_f)$, the double quotient $$\Gbf(\Qbb)\bsh (\Dcal\times \Gbf(\Abb_f)/K)$$ is called the Hodge variety associated to $(\Gbf,\Dcal)$ at level $K$. It is a complex analytic variety, which is smooth when $K$ is torsion-free. 

	(2) A connected Hodge datum is $(\Gbf,\Dcal^+)$ where for some Hodge datum $(\Gbf,\Dcal)$ we choose $\Dcal^+$ to be a connected component of $\Dcal$, equal to the $\Gbf(\Rbb)^+$-orbit of some $h\in\Dcal$. Choose further a congruence subgroup $\Gamma\subset\Gbf(\Qbb)^+$, we call the quotient space  $\Gamma\bsh\Dcal^+$ the connected Hodge variety associated to $(\Gbf,\Dcal^+)$ at level $\Gamma$. This is a complex analytic space, which is smooth when $\Gamma$ is torsion-free. 

	As is motivated by the case of Shimura varieties which are uniformized by period domains of Hermitian type, the evident map $$\pi_\Gamma: \Dcal^+\ra \Gamma\bsh\Dcal^+,\ h\mapsto \Gamma h$$ is called, by abuse of terminology,  the complex uniformization for $\Gamma\bsh\Dcal^+$.

	A Hodge variety in (1) is always a finite union of connected Hodge varieties associated to various congruence subgroups in $\Gbf(\Qbb)$.

	(3) A Hodge datum $(\Gbf,\Dcal)$ is of Shimura type if for any $h\in\Dcal$, the adjoint representation gives rise to a homomorphism $$\Sbb\overset{h}{\ra}\Gbf_\Rbb\ra\GL_{\gfrak,\Rbb}$$ which is a Hodge structure on $\gfrak=\Lie\Gbf$ of type $\{(-1,1),(0,0),(1,-1)\}$. The Hodge varieties thus obtained are Shimura varieties, which have been widely studied in algebraic geometry and arithmetic geometry.
\end{definition}

Unless stated otherwise, we only use connected Hodge data/varieties, and the adjective ``connected'' is omitted. In particular, in the expression $(\Gbf,\Dcal)$ for a Hodge datum, $\Dcal$ is understood to be connected.

\begin{definition}\label{definition hodge datum construction}
	(1) A morphism between Hodge data $(\Gbf_1,\Dcal_1)\ra(\Gbf_2,\Dcal_2)$ consists of \begin{itemize}
		\item a $\Qbb$-group homomorphism $f: \Gbf_1\ra\Gbf_2$;

		\item an $f$-equivariant map $f_*: \Dcal_1\ra\Dcal_2$, which sends any $h\in\Dcal_1$ to the composition $f\circ h: \Sbb\ra\Gbf_{2,\Rbb}$ lying in $\Dcal_2$.
	\end{itemize}

	(2) When $\Gamma_i\bsh\Dcal_i$ is a Hodge variety associated to $(\Gbf_i,\Dcal_i)$ at level $\Gamma_i$ ($i=1,2$), and $f: (\Gbf_1,\Dcal_1)\ra(\Gbf_2,\Dcal_2)$ is a morphism of Hodge data such that $f(\Gamma_1)\subset\Gamma_2$, then the evident map $$\Gamma_1\bsh\Dcal_1\ra \Gamma_2\bsh\Dcal_2,\ \Gamma_1 h\mapsto \Gamma_2 f_*(h)$$ is called the morphism between Hodge varieties associated to $f$. Only morphisms of this type are involved in this paper.

	(3) A Hodge subdatum of $(\Gbf,\Dcal)$ is given by a morphism $(\Gbf',\Dcal')\ra(\Gbf,\Dcal)$ of Hodge data such that $f:\Gbf'\ra\Gbf$ is an inclusion of $\Qbb$-subgroups and $f_*: \Dcal'\ra\Dcal$ is injective. For example, if $f: (\Gbf',\Dcal')\ra(\Gbf,\Dcal)$ is a morphism of Hodge data, then the image $(f(\Gbf'),f_*(\Dcal'))$ is a Hodge subdatum in $(\Gbf,\Dcal)$.

	If moreover $\Gamma\bsh\Dcal$ is a Hodge variety associated to $(\Gbf,\Dcal)$ with complex uniformization map $\pi_\Gamma$, then we call $\pi_\Gamma(\Dcal')$ the Hodge subvariety given by $(\Gbf',\Dcal')$. It is equal to the image of a suitably defined morphism between Hodge varieties $\Gamma'\bsh\Dcal'\ra\Gamma\bsh\Dcal$.

	Hodge subvarieties are also called special subvarieties, similar to the terminology in the setting of Shimura varieties.

	(4) If $(\Gbf_1,\Dcal_1)$ and $(\Gbf_2,\Dcal_2)$ are both Hodge data, then the pair $(\Gbf_1\times\Gbf_2,\Dcal_1\times\Dcal_2)$ is a well-defined Hodge datum, where we take $h_1\in\Dcal_1$ and $h_2\in\Dcal_2$ to form a homomorphism $h=(h_1,h_2): \Sbb\ra(\Gbf_1\times\Gbf_2)_\Rbb$ so that $\Dcal_1\times\Dcal_2$ is identified as the $(\Gbf_1\times\Gbf_2)(\Rbb)^+$-orbit of $h$. This is called the product of $(\Gbf_1,\Dcal_1)$ and $(\Gbf_2,\Dcal_2)$, which we also denoted as $(\Gbf_1,\Dcal_1)\times(\Gbf_2,\Dcal_2)$. If we further take congruence subgroups $\Gamma_1\subset\Gbf_1(\Qbb)^+$ and $\Gamma_2\subset\Gbf_2(\Qbb)^+$ respectively, then we obtain the product $\Gamma_1\bsh\Dcal_1\times \Gamma_2\bsh\Dcal_2$ of complex analytic varieties as the Hodge variety associated to $(\Gbf_1,\Dcal_1)\times(\Gbf_2,\Dcal_2)$ at level $\Gamma_1\times\Gamma_2$.

	(5) If $(\Gbf,\Dcal)$ is a Hodge datum and $\Nbf$ is a normal $\Qbb$-subgroup in $\Gbf$, then to the quotient group $\Gbf/\Nbf$ we associate a natural Hodge datum $(\Gbf/\Nbf,\Dcal')$ as the quotient of $(\Gbf,\Dcal)$ by $\Nbf$: if $\Dcal$ is the $\Gbf(\Rbb)^+$-conjugacy orbit of some $h: \Sbb\ra\Gbf_\Rbb$, then $\Dcal'$ is defined as the $(\Gbf/\Nbf)(\Rbb)^+$-conjugacy orbit of the composition $\Sbb\overset{h}{\ra}\Gbf_\Rbb\ra(\Gbf/\Nbf)_\Rbb$. This is independent of the choice of $h$.

	In particular, write $\Gbf^\ad$ for the adjoint quotient of $\Gbf$ (modulo the center $\Cbf_\Gbf$ of $\Gbf$), then in the associated Hodge datum $(\Gbf^\ad,\Dcal')$ as the quotient of $(\Gbf,\Dcal)$ by $\Cbf_\Gbf$ we have $\Dcal'=\Dcal$. In this case, take a congruence subgroup $\Gamma\subset\Gbf(\Rbb)^+$ of image $\Gamma'$ in $\Gbf^\ad(\Rbb)^+$, then the natural map $\Gamma\bsh\Dcal\ra\Gamma'\bsh\Dcal$ is an isomorphism of complex analytic space because $\Gamma$ acts on $\Dcal$ through $\Gamma'$, and we can identify Hodge subvarieties from both sides. 

	(6) A Hodge datum $(\Gbf,\Dcal)$ is of CM type if $\Gbf$ is a $\Qbb$-torus, in which case $\Dcal$ is reduced to a single point, and the Hodge subvarieties thus obtained are also called CM points or special points.
\end{definition}

\begin{definition}[Hodge locus and Hecke orbit]\label{definition hodge locus and hecke orbit}

	Let $M=\Gamma\bsh\Dcal$ be a Hodge variety associated to $(\Gbf,\Dcal)$ at level $\Gamma$.

	(1) We write $\HL(M)$ for the union of proper Hodge subvarieties in $M$, called the Hodge locus of $M$.

	(2) 	In a Hodge variety $M=\Gamma\bsh\Dcal$, a $\Cbb$-analytic subvariety $Z$ is Hodge generic if it is NOT contained in any Hodge subvariety $M'\subsetneq M$. Such varieties necessarily exist because the collection of Hodge subvarieties in $\Gamma\bsh\Dcal$ is countable. The set of Hodge generic points in $M$ is dense for the analytic topology.
	
	In particular, if $Z\subset M$ is a closed irreducible analytic subvariety, then there exists a minimal Hodge subvariety $M_Z$ containing $Z$, and $Z$ is Hodge generic in $M_Z$.

\end{definition}

We also need the notion of weakly special subvarieties in Hodge varieties.

\begin{definition}[weakly special subvarieties]\label{definitioin weakly special subvarieties} Let $M=\Gamma\bsh\Dcal$ be a Hodge variety associated to a Hodge datum $(\Gbf,\Dcal)$ at level $\Gamma$. A weakly special subvariety in $M$ is of the form $M'=f(M_1\times\{x_2\})$ where $f: (\Gamma_1\times\Gamma_2)\bsh (\Dcal_1\times\Dcal_2)\ra \Gamma\bsh \Dcal$ is a morphism between Hodge varieties from one defined by a Hodge datum of product type $(\Gbf_1\times\Gbf_2,\Dcal_1\times\Dcal_2)=(\Gbf_1,\Dcal_1)\times(\Gbf_2,\Dcal_2)$, in which $M_1=\Gamma_1\bsh\Dcal_1$ is of dimension $>0$ and $x_2\in M_2=\Gamma_2\bsh\Dcal_2$ is an arbitrarily chosen point.

	Note that the $M'$ thus defined is a Hodge subvariety if in the presentation $f(M_1\times\{x_2\})$ one may choose $x_2$ to be a CM point in $M_2$, or equivalently one may choose $(\Gbf_2,\Dcal_2)$ to be a Hodge datum of CM type.
	

\end{definition}

\subsection{Period maps and conjectures of Zilber-Pink type}

We proceed to Hodge loci associated to variations of Hodge structures. Note that when a variation of Hodge structures is given on a complex manifold $S$, we use the underlying local system to denote the variation itself.

\begin{definition}[period map and Hodge locus]\label{definition period map hodge locus} Let $S$ be a complex manifold carrying a $\Zbb$-PVHS $\Vbb$. Following \cite{CPM}, we choose a sufficiently general base point $o\in S$ and write $V=\Vbb_o\otimes_\Zbb\Qbb$, so that the generic Mumford-Tate group for $\Vbb$ over $S$ is identified as the Mumford-Tate group $\Gbf$ for the rational Hodge structure on $V$, which is a $\Qbb$-subgroup of $\GL_V$.

	(1) The Hodge structure $h_o: \Sbb\ra\GL_{V,\Rbb}$ (of image in $\Gbf_\Rbb$) gives rise  to the period domain $\Dcal=\Gbf(\Rbb)^+ h_o$.  We call $(\Gbf,\Dcal)$ the generic Hodge datum associated to $(S,\Vbb,o)$.

	In this case we have the period map $\Phi: S\ra \Gamma\bsh \Dcal$, deduced from a period map at the level of universal covering of $S$: $\widetilde{S}\overset{\widetilde{\Phi}}{\lra} \Dcal$; here $\Gamma$ is a suitable congruence subgroup in $\Gbf(\Qbb)^+$, which exists after possibly passing to a finite \'etale covering of $S$. 

	The period map is complex analytic, and we define special subvarieties in $S$ (with respect to $\Vbb$ and $\Phi$) to be irreducible analytic components of $\Phi^\inv(M')$ with $M'$ running over proper Hodge subvarieties in $\Gamma\bsh \Dcal$. The union of special subvarieties in $S$ is denoted as $\HL(S,\Vbb)$, called the Hodge locus of $S$ (with respect to $\Vbb$). 

	We also have the quotient Hodge datum $(\Gbf^\ad,\Dcal)$ from $(\Gbf,\Dcal)$ modulo the center $\Cbf_\Gbf$, and this results in an adjoint period map, as the composition $\Phi^\ad: S\ra\Gamma\bsh\Dcal\ra\Gamma'\bsh\Dcal$. The irreducible components of $(\Phi^\ad)^\inv(M')$ are exactly the special subvarieties in $S$ as $M'$ runs through Hodge subvarieties in $\Gamma'\bsh\Dcal$.

	(2) Note that the choice of base point $o$ defines a representation of $\pi_1(S,o)$ on the $\Qbb$-vector space $V$, and the Zariski closure of the image of $\pi_1(S,o)\ra\GL_V(\Qbb)$ is a linear $\Qbb$-subgroup of $\GL_V$ contained in $\Gbf$, whose neutral component $\Mbf$ is called the algebraic monodromy group for $\Vbb$. It is known that $\Mbf$ is a normal $\Qbb$-subgroup in $\Gbf$.

	In this case we also have $\Gbf^\ad\isom\Mbf^\ad\times\Nbf^\ad$ for some connected reductive $\Qbb$-group $\Nbf$, together with a decomposition of period domain $\Dcal\isom\Dcal_\Mbf\times\Dcal_\Nbf$, where $\Dcal_\Mbf$ is the $\Mbf^\ad(\Rbb)^+$-orbit of the composition $\Sbb\overset{h}{\ra}\Gbf_\Rbb\ra\Gbf^\ad_\Rbb\ra\Mbf^\ad_\Rbb$ giving rise to a Hodge datum $(\Mbf^\ad,\Dcal_\Mbf)$; the definition for $(\Nbf^\ad,\Dcal_\Nbf)$ is similar. The composition $\Phi_\Mbf: S\ra \Gamma\bsh\Dcal\ra\Gamma_\Mbf\bsh\Dcal_\Mbf$ is referred to as the reduced period map, with $\Gamma_\Mbf$ the image of $\Gamma$ in $\Mbf^\ad(\Qbb)^+$. According to \ref{lemma special subvarieties of factor type}, we can find, up to passing to suitable finite \'etale covering of $S$ and shrinking $\Gamma$, an isomorphism of Hodge varieties $$\Gamma_\Mbf\bsh\Dcal_\Mbf\times\Gamma_\Nbf\bsh\Dcal_\Nbf\isom\Gamma\bsh\Dcal$$ such that the image of the period map $\Phi$ is $S_\Mbf\times\{y_\Nbf\}$ where $S_\Mbf\subset\Gamma_\Mbf\bsh\Dcal_\Mbf$ is a Hodge generic subvariety, and $y_\Nbf\in\Gamma_\Nbf\bsh\Dcal_\Nbf$ is a Hodge generic point.

	Following \cite{BKU}, we refer to the induced map $\Phi_\Mbf: S\ra\Gamma_\Mbf\bsh\Dcal_\Mbf$ as the weak period map, where $\Gamma_\Mbf\bsh\Dcal_\Mbf$ is identified as a weakly special subvariety in $\Gamma\bsh\Dcal$ via the point $y_\Nbf$ in the remaining factor $\Gamma_\Nbf\bsh\Dcal_\Nbf$; $(\Mbf,\Dcal_\Mbf)$ is also called the weak Hodge datum associated to $(S,\Vbb)$. 

	We also notice that special subvarieties in $S$ are exactly the irreducible analytic components of $\Phi_\Mbf^\inv(Z)$ with $Z$ running over Hodge subvarieties in $\Gamma_\Mbf\bsh\Dcal_\Mbf$, and the same is true for the characterization of weakly special subvarieties. Therefore it is often convenient to work with the case where $\Mbf^\ad=\Gbf^\ad$, or equivalently where $\Mbf$ contains $\Gbf^\der$ (i.e. $\Mbf$ only differs from $\Gbf$ by some central $\Qbb$-torus).

\end{definition}

\begin{remark}
	In the setting above, the algebraic monodromy group $\Mbf$ inside the Mumford-Tate group $\Gbf$ contains NO compact $\Qbb$-factors. In fact, the algebraic monodromy group remains unchanged when we replace $S$ by a finite \'etale covering. The image of the monodromy representation of $\pi_1(S)$ inside $\Gbf(\Rbb)$ is a discrete subgroup, and will not meet any compact normal subgroups in $\Gbf(\Rbb)$ when we pass to a subgroup in $\pi_1(S)$ of finite index. Hence $\Mbf$ itself does not contain any compact $\Qbb$-factors.
\end{remark}

The notion of atypical special subvarieties is central in the study of \cite{BKU}:

\begin{definition}\label{definition atypical special subvarieties} 

	Let $\Vbb$ be a $\Zbb$-PVHS on a smooth quasi-projective algebraic variety $S$ over $\Cbb$, of generic Hodge datum $(\Gbf,\Dcal)$ and period map $\Phi: S\ra M=\Gamma\bsh\Dcal$ (regarding $S$ as a complex manifold). Let $Y\subset S$ be a closed irreducible algebraic subvariety over $\Cbb$.

	(1) There exists a smallest Hodge subvariety $M_Y$ in $M$ whose pre-image along $\Phi$ contains $Y$ as an irreducible component, and is referred to as the special closure of $\Phi(Y)$. The Hodge codimension of $Y$ with respect to $\Vbb$ is $$\hodgecd(Y,\Vbb):=\dim M_Y-\dim\Phi(Y).$$ We say $Y$ is atypical with respect to $\Vbb$ if either $Y$ is singular for $\Vbb$, or we have the inequality $$\hodgecd(Y,\Vbb)<\hodgecd(S,\Vbb),$$ namely $\codim_M(\Phi(Y))<\codim_M(\Phi(S))+\codim_M(M_Y).$ Otherwise $Y$ is said to be typical.

	(2) There exists a smallest weakly special subvariety $M'_Y$ in $M$ containing $\Phi(Y)$, which can be realized, up to shrinking $\Gamma$, as $M_1\times\{y_2\}$ lying in a Hodge subvariety in $M$ of the form $M_1\times M_2$ with $y_2\in M_2$ Hodge generic, following the description of Lemma \ref{lemma special subvarieties of factor type}. We thus define the monodromic codimension of $Y$ with respect to $\Vbb$ to be $$\moncd(Y,\Vbb):=\dim M'_Y-\dim\Phi(Y).$$ We say $Y$ is monodromically atypical with respect to $\Vbb$ if either $Y$ is singular for $\Vbb$, or we have the inequality $$\moncd(Y,\Vbb)<\moncd(S,\Vbb).$$ Otherwise $Y$ is said to be monodromically typical. Note that $\moncd(S,\Vbb)$ only depends on the weak period map for $(S,\Vbb)$.

It is clear that atypical subvarieties are monodromically atypical, and monodromically typical subvarieties are typical (see \cite[Lemma 5.6]{BKU}) .

\end{definition}
Note that when $S=M$ is a Hodge variety of Shimura type with the identity map as the period map, then every Hodge subvariety is typical. Atypicality only arise when the period map is non-trivial. We will be mainly concerned with the case where the period map is (generically) finite.
\bigskip

\begin{convention}\label{convention on subscripts for hodge loci}
	When $\Phi: S\ra M=\Gamma\bsh\Dcal$ is a period map associated to some $\Zbb$-PVHS on $S$, inside the Hodge locus $\HL(S,\Vbb)$ we have the following subsets of interest: $$\HL(S,\Vbb)_\atyp,\ \HL(S,\Vbb)_\typ,\ \HL(S,\Vbb)_\rig,\ \HL(S,\Vbb)_\nonrig$$ which are the union of atypical resp. typical resp. rigid resp. non-rigid proper special subvarieties in $S$ with respect to $\Vbb$. Here rigid resp. non-rigid special subvarieties in $S$ are irreducible analytic components of pre-images of rigid resp. non-rigid Hodge subvarieties in $M$ under $\Phi$.

	We also need other subsets in $\HL(S,\Vbb)$ following \cite{BKU}. Up to enlarging $S$, we may assume that $\Phi$ is proper, and up to shrinking $\Gamma$, we may assume that $\Dcal=\Dcal_1\times\cdots\Dcal_r$ is a direct product decomposition along the decomposition $\Gbf^\ad=\prod_j\Gbf_j$ into simple $\Qbb$-factors, and thus $\Gamma\bsh\Dcal\isom \prod_j\Gamma_j\bsh\Dcal_j$ for suitable congruence subgroups $\Gamma_j\subset\Gbf_j^\ad(\Qbb)^+$.

	\begin{itemize}
		\item $\HL(S,\Vbb)_\pos$ is the union of special subvarieties $Z$ in $S$ such that $\Phi(Z)$ is of dimension $>0$, i.e. of positive period dimension.

		\item $\HL(S,\Vbb)_\factorpos$ is the union of special subvarieties in $S$ such that the image of $Z$ in each $\Gamma_j\bsh\Dcal_j$ along the natural projection $\Gamma\bsh\Dcal\ra\Gamma_j\bsh\Dcal_j$ is of dimension $>0$.
	\end{itemize}
\end{convention}
The work \cite{BKU} proposed the following conjecture, motivated by the Zilber-Pink conjecture for Shimura varieties:

\begin{conjecture}[Zilber-Pink]\label{conjecture zilber pink} Let $\Phi: S\ra M=\Gamma\bsh\Dcal$ be the period map for a $\Zbb$-PVHS on an irreducible smooth quasi-projective algebraic variety $S$. Then the following equivalent conditions are true: \begin{itemize}
		\item[(a)] The subset $\HL(S,\Vbb)_\atyp$ is a finite union of maximal atypical special subvarieties of $S$;
		\item[(b)] The subset $\HL(S,\Vbb)_\atyp\subsetneq S$ is an algebraic subvariety in $S$;

		\item[(c)] The subset $\HL(S,\Vbb)_\atyp$ is NOT Zariski dense in $S$.
	\end{itemize}

\end{conjecture}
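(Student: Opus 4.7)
The statement bundles together two claims: the equivalence of (a), (b), (c), which is accessible with the tools of \cite{BKU}, and the truth of these conditions, which is the substance of the conjecture. I address them in turn.

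For the equivalence, (a) $\Rightarrow$ (b) is straightforward: each atypical special subvariety is an analytic component of $\Phi^{-1}(M')$ for a strict Hodge subvariety $M' \subset \Gamma \backslash \Dcal$, hence algebraic by the Cattani-Deligne-Kaplan theorem (extended to arbitrary $\Zbb$-PVHS by Bakker-Klingler-Tsimerman), and a finite union of strict algebraic subvarieties is algebraic and strict. The implication (b) $\Rightarrow$ (c) follows from irreducibility of $S$. The reverse (c) $\Rightarrow$ (a) is where real work is required: if $Z$ denotes the Zariski closure of $\HL(S,\Vbb)_\atyp$, assumed proper, its irreducible components $Z_1,\ldots,Z_k$ each carry a dense family of atypical special subvarieties, and one must show that each $Z_i$ is itself an atypical special subvariety. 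I would establish this via the Ax-Schanuel theorem for $\Zbb$-PVHS proved by Bakker-Tsimerman \cite{BT}: an irreducible algebraic subvariety through which atypical intersections accumulate cannot have period image Hodge generic in any weakly special subvariety strictly smaller than its own special closure, which forces each $Z_i$ to coincide with the pull-back of its special closure; semicontinuity of Hodge codimension then transfers atypicality from the dense family to $Z_i$ itself, and maximality is automatic since distinct irreducible components cannot contain one another.

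For the truth of the equivalent conditions, I would argue by contradiction. Assume $\HL(S,\Vbb)_\atyp$ is Zariski dense, and pick a Zariski-accumulating family $\{Y_\alpha\}$ of maximal atypical special subvarieties. Their special closures $M_{Y_\alpha}$ satisfy $\dim M_{Y_\alpha}-\dim\Phi(Y_\alpha)<\hodgecd(S,\Vbb)$; applying Ax-Schanuel to the graph of the lifted period map $\widetilde{\Phi}\colon \widetilde{S}\to\Dcal$ should force the generic Mumford-Tate groups of the $Y_\alpha$ to lie in a bounded collection of reductive $\Qbb$-subgroups of $\Gbf$ of controlled rank. A Deligne-type finiteness argument for such subgroups, combined with the description of weakly special subvarieties in Definition \ref{definitioin weakly special subvarieties}, then reduces the problem to accumulation within a single positive-dimensional weakly special subvariety, contradicting the maximality of the $Y_\alpha$.

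The hardest case is the monodromically typical but Hodge atypical one, where $\Gbf^{\ad} \neq \Mbf^{\ad}$ and atypical intersections can be generated simply by moving the parameter $y_2$ in the complementary factor $(\Nbf^{\ad},\Dcal_\Nbf)$ of Definition \ref{definition period map hodge locus}; this is exactly the Hecke-translation obstruction on $\mathcal{A}_g$ highlighted after Theorem \ref{thm-B}, and is the structural reason a fully unconditional proof remains out of reach. In the setting of Theorems A and B, however, the absolute simplicity of the derived Mumford-Tate group forces $\Gbf^{\ad}$ to be simple and hence $\Nbf^{\ad}$ to be trivial, which removes this obstruction and restricts attention to the monodromically atypical setting where the methods of \cite{BKU} are most effective; I would expect the conjecture to be fully tractable in that regime and to feed directly into the applications of Theorems A, B and Corollary C.
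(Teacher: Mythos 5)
The statement you are addressing is explicitly labeled a \emph{Conjecture} — it is the Zilber--Pink conjecture for variations of Hodge structure, stated in \cite{BKU} and reproduced in the paper as an open problem modeled on the Zilber--Pink conjecture for Shimura varieties. The paper does not prove it (nor does \cite{BKU}); it is offered as motivation, with the weaker \emph{Geometric} Zilber--Pink theorem cited immediately afterward as ``evidence toward the conjecture.'' So there is no paper proof to compare your attempt against, and the first thing you should have noted is that a blind proof attempt here cannot succeed.

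You do correctly separate the statement into two layers: the equivalence of (a), (b), (c), which is a theorem (shown in \cite{BKU} using Ax--Schanuel / the geometric Zilber--Pink machinery), and the assertion that these equivalent conditions hold, which is the actual conjecture. Your sketch of (a) $\Rightarrow$ (b) $\Rightarrow$ (c) is fine and matches the standard argument. Your sketch of (c) $\Rightarrow$ (a) is directionally right but papers over the hard step: showing that each irreducible component $Z_i$ of $\overline{\HL(S,\Vbb)_\atyp}^{\mathrm{Zar}}$ is itself an atypical special subvariety is exactly what the geometric Zilber--Pink theorem is for, and ``semicontinuity of Hodge codimension transfers atypicality from the dense family to $Z_i$'' does not substitute for that argument — Hodge codimension is not semicontinuous in the naive sense, and the passage from a dense family of atypical subvarieties to atypicality of the ambient component requires the factorization of the period map through a product of Hodge data established via Ax--Schanuel.

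For the truth of the conjecture itself, your sketch does not constitute a proof and cannot: the proposed ``Deligne-type finiteness argument for such subgroups'' is not available in the generality required, and Ax--Schanuel alone gives no arithmetic control over the family of Hodge subdata (this is precisely why even the Shimura-variety case of Zilber--Pink is open). You acknowledge this at the end, but that admission makes the body of the proposal misleading — what you have written is a heuristic, not a proof. The honest response to this prompt is that the statement is a conjecture the paper deliberately leaves unproved, and that the paper's Theorems~\ref{thm-A} and \ref{thm-B} are instead built on the \emph{proved} geometric Zilber--Pink theorem combined with the extra hypotheses (absolute simplicity of $\Gbf^{\der}$, level/rank restrictions) that sidestep exactly the Hecke-translation obstruction you identify.
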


As an evidence toward the conjecture, in \cite{BKU} one finds the following results:

\begin{theorem}[Geometric Zilber-Pink]\label{GZP} Let $S$ be a smooth quasi-projective algebraic variety over $\Cbb$ carrying a $\Zbb$-PVHS $\Vbb$, and let $Z\subsetneq S$ be an irreducible component in the Zariski closure of $\HL(S,\Vbb)_\pos\cap\HL(S,\Vbb)_\atyp$ in $S$. Assume that $Z$ is NOT a maximal atypical special subvariety, whose generic Mumford-Tate group $\Gbf_Z$ gives rise to the Hodge datum $(\Gbf_Z,\Dcal_Z)$ for $\Vbb|_Z$ and the adjoint quotient $\Gbf_Z^\ad$ of $\Gbf_Z$ leads to the period map $\Phi_Z: Z\ra \Gamma_Z\bsh\Dcal_Z$.  Then up to replacing $S$ by a finite \'etale covering, we have a further decomposition $$\Phi_Z=(\Phi',\Phi''): Z\ra \Gamma_Z\bsh\Dcal_Z\isom\Gamma'\bsh\Dcal'\times\Gamma''\bsh\Dcal''$$ along $(\Gbf_Z^\ad,\Dcal_Z)\isom(\Gbf',\Dcal')\times(\Gbf'',\Dcal'')$, such that $Z$ contains a Zariski dense subset of atypical subvarieties whose images along $\Phi''$ are CM points in $\Gamma''\bsh\Dcal''$, and $Z$ is Hodge generic in a typical subvariety in $S$ with respect to $\Phi$.

\end{theorem}

We remark that the proof of this theorem in \cite{BKU} relies on the Ax-Schanuel theorem of VHS, proved in \cite{NPT} for Shimura varieties and \cite{BT} for general case. Moreover, the proof of the Ax-Schanuel theorem in \cite{NPT} cruically used the volume estimate of complex analytic subvarieties in Hermitian symmetric domain developed in \cite{HT}, and Bakker and Tsimerman generalized it to horizontal subvarieties in period domains in \cite{BT}.

Recently, Baldi and Urbanik \cite[Thm 1.9/Thm7.1/Lem7.5]{BU24} refines this Theorem \ref{GZP}. Here is their result:

\begin{theorem}\label{BU24}
    Given a $\mathbb Z$VHS over $S$, there are finitely many fibrations in $S$: $f_j:C_j\to B_j,\ j=1,2,\cdots,n$, whose fibers are all maximal monodromically atypical weakly special subvarieties of $S$. And each maximal monodromically atypical weakly special subvariety is exact a fiber of some fibration above.
\end{theorem}
\begin{remark}
  (1) The base of the fibration $f_j$ can be trivial, i.e. reduced to a point. In this case the whole space $C_j$ of $f_j$ is a monodromically atypical special subvariety of $S$. Hence we have a strict inclusion $C_j\subsetneq S$, see \cite[Theorem 6.1, case(a)]{BKU}.

  (2) If $f_j$ is a non-trivial fibration , one can prove that its fibers have the same algebraic monodromy group $\bf N$, which is a normal subgroup of the algebraic monodromy group $\Hbf_{C_j}$ of the whole space $C_j$ of $f_j$. Besides, one knows that $\Hbf_{C_1j}$ is not $\mathbb Q$-simple, see \cite[Theorem 6.1, case(b)]{BKU}.
\end{remark}

\subsection{Levels and related results}

Several imporant results from \cite{BKU} involve the notion of levels of Hodge data:

\begin{theorem}[Properties of (a)typical loci]
	Let $S$ be a smooth quasi-projective algebraic variety over $\Cbb$, carrying a $\Zbb$-PVHS $\Vbb$.

	(1) If $\HL(S,\Vbb)_\typ$ is non-empty, then $\HL(S,\Vbb)$ is dense in $S$ for the analytic topology.

	(2) When $\Vbb$ is of level at least 3,

	\begin{itemize} \item $\HL(S,\Vbb)_\factorpos$ is a finite union of maximal atypical special subvarieties, and thus it is algebraic;

		\item $\HL(S,\Vbb)_\typ$ is empty, and thus $\HL(S,\Vbb)=\HL(S,\Vbb)_\atyp$.\end{itemize}

	(3) When $\Vbb$ is of level 2 and $\Gbf^\ad$ is simple, then $\HL(S,\Vbb)_\pos\cap\HL(S,\Vbb)_\atyp$ is algebraic.

\end{theorem}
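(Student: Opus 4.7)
The plan is to derive all three statements from the Ax-Schanuel theorem for variations of Hodge structures (proved in \cite{NPT} for the Shimura case and in \cite{BT} in general), combined with a careful analysis of how the level of $\Vbb$ constrains the horizontal geometry of the period domain $\Dcal$.

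For part (1), I would argue as follows. If $Y\subset S$ is a typical special subvariety arising from a Hodge subvariety $M'\subset M$, then density of $\Gbf(\Qbb)^+$ in $\Gbf(\Rbb)^+$ produces Hecke translates $gM'$ dense in $M$. The typicality identity
\[
\codim_M \Phi(Y) = \codim_M \Phi(S) + \codim_M M'
\]
together with upper semicontinuity of intersection dimension guarantees that each nearby translate $gM'$ meets $\Phi(S)$ in an irreducible component of positive dimension. Pulling back by $\Phi$ yields special subvarieties of $S$ accumulating densely for the analytic topology.

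For parts (2) and (3), I would apply Ax-Schanuel to the graph of the period map. The theorem asserts that any analytic intersection between $\Phi(S)$ and a weakly special subvariety of $\Dcal$ of dimension larger than expected must be contained in a proper weakly special subvariety. The level of $\Vbb$ controls the codimension of the horizontal tangent bundle inside the full tangent bundle of $\Dcal$: at level $\geq 3$ this codimension is large enough that no typical intersection of positive dimension can survive, forcing $\HL(S,\Vbb)_\typ=\varnothing$ and leaving all of $\HL(S,\Vbb)_\factorpos$ to consist of atypical subvarieties. Combining Ax-Schanuel with a noetherian-style induction on the poset of weak Hodge subdata, together with definable Chow in $\Rbb_{\mathrm{an},\exp}$, should give the finiteness and algebraicity asserted in (2). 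In part (3), the level-$2$ hypothesis a priori permits typical intersections, but the simplicity of $\Gbf^\ad$ rules out the product-factor mechanism by which such intersections could form a Zariski-dense family; one then upgrades the local Ax-Schanuel bound to global algebraicity via o-minimal counting.

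The main obstacle is the Ax-Schanuel theorem itself: the proof in the full period-domain setting of \cite{BT} depends on delicate volume estimates for horizontal analytic subvarieties, which in the Hermitian symmetric case go back to \cite{HT} and in general require substantial additional analytic work. A secondary bookkeeping difficulty is that the factor-positive condition forces one to track algebraicity component by component across the adjoint decomposition $\Gbf^\ad=\prod_j\Gbf_j$, and to reassemble the global conclusion from the projections to each $\Gamma_j\bsh\Dcal_j$.
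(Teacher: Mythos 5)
The paper does not prove this theorem: it is quoted verbatim from \cite{BKU} (their Theorems 1.5, 5.10, and Proposition 5.13 in the numbering of that paper) as background for the level-by-level analysis in the proof of Theorem~\ref{thm-generic-rig}. So there is no internal proof in this paper against which to compare, and the relevant question is whether your sketch is a sound reconstruction of the argument in \cite{BKU}.

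Your strategy is broadly aligned with \cite{BKU} (Ax--Schanuel, o-minimality, induction over weak Hodge subdata), but the central step in parts (2) and (3) is misidentified. You say that level $\geq 3$ makes ``the codimension of the horizontal tangent bundle inside the full tangent bundle of $\Dcal$'' large enough to kill typical positive-dimensional intersections. That is not what drives the argument. The actual input, reproduced as Proposition~\ref{proposition consequences of level 3 and level 2} in the present paper, is Lie-theoretic: at level $\geq 3$, any Hodge subdatum $(\Gbf',\Dcal')$ with $(\gfrak')^j=\gfrak^j$ for $|j|\geq 2$ must satisfy $\Gbf'=\Gbf$, because the graded pieces of high degree generate $\gfrak$ (cf.\ Lemma~\ref{lemma generation at level 1}). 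It is this rigidity of Hodge subdata, not a raw codimension count on the Griffiths transversal distribution, that forces every positive-period-dimensional special subvariety to be atypical when the level is $\geq 3$, and that makes the level-2, $\Gbf^\ad$-simple case manageable. A crude count of the horizontal codimension does not by itself preclude typical intersections of positive dimension, and I do not see how your version of the step would close.

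A secondary issue is part (1). You invoke ``upper semicontinuity of intersection dimension'' to conclude that nearby Hecke translates $gM'$ still meet $\Phi(S)$ in positive dimension. Upper semicontinuity of dimension does not prevent the intersection from becoming empty; what is needed is a \emph{stability} statement for intersections of expected codimension (a transversality argument at a smooth point of the intersection, using that $\Phi(S)$ and $M'$ meet with complementary codimensions in $M$). This is exactly what the typicality hypothesis supplies, but the justification has to go through an implicit-function/transversality argument rather than semicontinuity, and in the non-algebraic (merely definable) setting of period images this requires the o-minimal control you mention only in parts (2)--(3). As a blind reconstruction it has the right shape, but neither the level argument nor the density argument is yet a proof.
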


We recall definitions related to Hodge-Lie algebras and their levels. For conveniences we formulate them with coefficients in subfields of $\Rbb$. 

\begin{definition}\label{definition F-Hodge data}

	Let $F$ be a subfield in $\Rbb$. We already have the notion of $F$-Hodge structures following \cite{Andre}.

	(1) An $F$-Hodge structure is a pair $(V,\rho)$ where: \begin{itemize}
		\item $V$ is a finite dimensional $F$-vector space;
		\item $\rho: \Sbb\ra\GL_{V\otimes_F\Rbb}$ is a homomorphism of $\Rbb$-groups.
	\end{itemize} Namely it is an $\Rbb$-Hodge structure whose underlying vector space is defined over the subfield $F$. Other notions like weights, polarizations, variations of $F$-Hodge structures, etc. are understood similarly.

	(2) The Mumford-Tate group of an $F$-Hodge structure $(V,\rho)$ is defined to be the smallest $F$-subgroup $\Hbf$ of $\GL_V$ such that the image $\rho(\Sbb)$ is contained in $\Hbf_\Rbb$, and is denoted as $\MT(V,\rho)=\MT(V)$.

	Note that when $F=\Rbb$, the Mumford-Tate group is nothing but the image $\rho(\Sbb)$ itself as an $\Rbb$-subgroup. In general, $\MT(V,\rho)$ is not commutative.

	(3) We can also define (connected) $F$-Hodge data formally as a pair $(\Gbf, \Dcal)$ where: \begin{itemize}
		\item $\Gbf$ is a connected reductive $F$-group;

		\item $\Dcal$ is a $\Gbf(\Rbb)^+$-conjugacy orbit of homomorphisms $h: \Sbb\ra\Gbf_\Rbb$ satisfies the same condition as in \ref{definition hodge datum and hodge variety}; it is still required that $\Gbf$ contains NO normal $F$-factors whose associated Lie group is fixed by the Cartan involution induced by $h(\sqrt{-1})$.

	\end{itemize} Other notions like $F$-Hodge subdata are defined similarly as in \ref{definition hodge datum construction}.


\end{definition}

\begin{remark}
	The period map in the setting of \ref{definition F-Hodge data} should be treated carefully. When $F$ is a number field in $\Rbb$ and $S$ is a complex manifold carrying an $F$-PVHS, we fix a base point $s\in S$ and study the monodromy representation of $\pi_1(S,s)$ on $V:=\Vbb_s$. Then we have the algebraic monodromy group $\Hbf$ as the smallest $F$-subgroup in $\GL_V$ containing the image of the monodromy representation. Using the arguments in \cite{Andre}, we see that the $\Hbf_\Rbb$-action on $V_\Rbb$ is normalized by the image $\rho_s(\Sbb)$ inside $\GL_{V_\Rbb}$ given by the Hodge structure $(V,\rho_s)$ at $s$, and hence the $F$-subgroup $\Gbf$ generated by $\Hbf$ and $\MT(V,\rho_s)$ serves as the generic Mumford-Tate group for the $F$-PVHS and contains $\Hbf$ as a normal $F$-subgroup. In this way we also get an $F$-Hodge datum $(\Gbf,\Dcal)$ and a period map $\widetilde{S}\ra\Dcal$ at the level of universal covering space.

	In this setting the monodromy representation of $\pi_1(S,s)$ has its image in $\Hbf(\Rbb)$. If this image happens to be contained in a discrete subgroup $\Gamma$ of $\Hbf(\Rbb)$ acting on $\Dcal$ discontinuously, then we still obtain a period map of the form $S\ra\Gamma\bsh\Dcal$, or even an analogue only involving the monodromic part $$S\ra\Gamma_{\Hbf^\ad}(\Dcal_{\Hbf^\ad})$$ using the $F$-Hodge datum $(\Hbf^\ad,\Dcal_{\Hbf^\ad})$ as the quotient of $(\Gbf,\Dcal)$ modulo the centralizer of $\Hbf$ in $\Gbf$.
\end{remark}

\begin{definition}\label{definition hodge lie algebra}   Let $F$ be either a number field in $\Rbb$ or $\Rbb$ itself.

	(1) An $F$-Hodge-Lie algebra is a pair $(\gfrak,h)$ consisting of: \begin{itemize}
		\item a reductive Lie algebra $\gfrak$ over $F$, endowed with an $F$-Hodge structure of weight zero $$\gfrak_\Cbb=\bigoplus_{i\in\Zbb}\gfrak^i$$ with $\gfrak^i=\gfrak^{i,-i}$ in the usual sense;

		\item the Lie bracket $[\ ,\ ]: \bigwedge^2\gfrak\ra \gfrak$ is a morphism of $F$-Hodge structures, and the negative of the Killing form $B_\gfrak: \gfrak^\ad\otimes_F\gfrak^\ad\ra F$ is a polarization of the $F$-Hodge structure (upon the identification of $\gfrak^\ad$ with $\gfrak^\der$ as an $F$-Hodge structure summand inside $\gfrak$).

		(2) When $F=\Rbb$, the level of an $\Rbb$-Hodge-Lie algebra $(\gfrak,h)$ is the largest $i\in\Zbb$ such that $\gfrak^i\neq0$ in the Hodge decomposition along $h$. Clearly this only depends on the $\Rbb$-Hodge structure on $\gfrak^\ad$. 

		When $F$ is a number field in $\Rbb$, and $(\gfrak,h)$ is an  $F$-Hodge-Lie algebra, with $\gfrak$ noncommutative and simple as a Lie algebra over $F$, then $\gfrak\otimes_F\Rbb=\bigoplus_j\hfrak_j$ is a direct sum of finitely many simple Lie algebras $\hfrak_j$ over $\Rbb$, each of which carries a Hodge-Lie algebra structure induced by $h$. In this case we define the level of $(\gfrak,h)$ to be the maximum of the levels of the $\hfrak_j$'s.

		When $F$ is a number field and $(\gfrak,h)$ is an $F$-Hodge-Lie algebra, we define its level to be the minimum of the levels of the (non-commutative) simple $F$-factors in $\gfrak$.

	\end{itemize}

	(3)  Let $F$ be a number field in $\Rbb$. When $(\Gbf,\Dcal)$ is an $F$-Hodge datum, the adjoint representation of $\Gbf$ on $\gfrak=\Lie\Gbf$ induces, via any $h\in\Dcal$, an $F$-Hodge-Lie algebra $(\gfrak,h)$. We thus define the level of $(\Gbf,h)$ to be the level of $(\gfrak,h)$, which only depends on $\gfrak^\ad$ and $\Dcal$.

	When $\Vbb$ is an $F$-PVHS on a complex manifold $S$, we define the level of $\Vbb$ to be the level of the associated $F$-Hodge datum.

\end{definition}

\begin{remark} Let $F$ be a number field in $\Rbb$. For a general $F$-Hodge datum $(\Gbf,\Dcal)$ of adjoint quotient $(\Gbf^\ad,\Dcal)$, the polarizability condition on $h\in\Dcal$ forces an isomorphism $\Gbf^\ad=\prod_{j=1}^r\Gbf_j$ with $\Gbf_j=\Res_{F_j/F}\Hbf_j$ given by some  number field $F_j$ in $\Rbb$ containing $F$ and some absolutely simple $F_j$-group $\Hbf_j$, and this results in a further decomposition $$(\Gbf^\ad,\Dcal)\isom\prod_{j=1}^r(\Gbf_j,\Dcal_j)$$ of $F$-Hodge data. In this case the complex embeddings of $F_j$ extending $F\mono\Rbb$ have to be real. In particular, when $F=\Qbb$, we are led to simple $\Qbb$-factors defined by absolutely simple linear groups defined over totally real number fields. 
\end{remark}

In \cite{BKU} the following properties of levels are established

\begin{lemma}\label{lemma generation at level 1} Let $(\Gbf,\Dcal)$ be an $\Rbb$-Hodge datum, with $\Gbf$ semi-simple of Lie algebra $\gfrak$ with Hodge decomposition $\gfrak_\Cbb=\oplus_{j\in\Zbb}\gfrak^j$. Then the minimal Lie subalgebra in $\gfrak_\Cbb$ containing $\gfrak^1$ and $\gfrak^\inv$ equals $\gfrak_\Cbb$ itself.

\end{lemma}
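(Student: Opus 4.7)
My plan is to exploit the Killing form $B$ of $\gfrak$, which polarizes the Hodge structure on $\gfrak=\gfrak^{\ad}$, together with the simplicity of the $\Rbb$-factors guaranteed by the no-compact-factor hypothesis. Let $\mathfrak{h}\subset\gfrak_\Cbb$ denote the Lie subalgebra generated by $\gfrak^1$ and $\gfrak^{-1}$. First I would reduce to the case where $\gfrak$ is $\Rbb$-simple: decomposing $\gfrak=\bigoplus_i\gfrak_i$ into $\Rbb$-simple factors, the Hodge cocharacter $\mu$ factors through $\Gbf^{\ad}$ and preserves each normal factor, so the Hodge decomposition splits as $\gfrak^j=\bigoplus_i\gfrak_i^j$ and the question reduces to each $\gfrak_i$, which is non-compact by hypothesis. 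Observe also that $\mathfrak{h}$ is automatically $\mu$-graded (it is generated by homogeneous pieces), is stable under the real structure $\sigma$ of $\gfrak\subset\gfrak_\Cbb$ (since $\sigma$ swaps $\gfrak^1$ and $\gfrak^{-1}$), and is $\ad(\gfrak^0)$-stable by the Jacobi identity applied to $[\gfrak^0,\gfrak^{\pm 1}]\subset\gfrak^{\pm 1}\subset\mathfrak{h}$.

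The heart of the proof is the bracket identity $[\gfrak^1,\gfrak^{-1}]=\gfrak^0$, which I would establish by a Killing-form orthogonality argument. If $X\in\gfrak^0$ is $B$-orthogonal to $[\gfrak^1,\gfrak^{-1}]$, then $\ad$-invariance of $B$ yields $B([X,Y],Z)=B(X,[Y,Z])=0$ for every $Y\in\gfrak^1$, $Z\in\gfrak^{-1}$; since $B$ pairs $\gfrak^j$ and $\gfrak^{-j}$ non-degenerately (the grading being $\mu$-equivariant and $B$ non-degenerate on $\gfrak$), this forces $[X,Y]=0$ on all of $\gfrak^{\pm 1}$. The ideal of $\gfrak$ generated by such an $X$ therefore stays inside $\gfrak^0$, and simplicity of $\gfrak$ combined with the non-triviality of the Hodge structure (which is guaranteed by the non-compactness of $\gfrak$, a compact $\gfrak$ being excluded) collapses this ideal to zero, giving $X=0$ and thus $\gfrak^0=[\gfrak^1,\gfrak^{-1}]\subset\mathfrak{h}$.

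With $\gfrak^0\subset\mathfrak{h}$ in hand, I would finish by induction on $|k|$: using the $\ad(\gfrak^0)$-stability of $\mathfrak{h}$ and an analogous Killing-form pairing between $\gfrak^k$ and $\gfrak^{-k}$, one propagates $\gfrak^{\pm k}\subset\mathfrak{h}$ from low to high weight, eventually giving $\mathfrak{h}=\gfrak_\Cbb$. The main obstacle I anticipate is ensuring the argument can start and propagate in general---namely that $\gfrak^{\pm 1}\neq 0$ on every non-compact $\Rbb$-simple factor, and that iterated brackets really surject onto each $\gfrak^{\pm k}$. This is the point where one must appeal to a structural input on cocharacters of Hodge type on simple non-compact real Lie algebras, ruling out \emph{weight-gapped} gradings of the form $\gfrak_\Cbb=\gfrak^{-k}\oplus\gfrak^0\oplus\gfrak^k$ with $k\geq 2$, and ensuring that adjacent Hodge pieces interact non-trivially through the polarization. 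I expect this will be handled either by a direct classification argument or by an application of the Hodge--Riemann bilinear relations forcing a sufficiently dense spectrum of weights for $\mu$ on each non-compact simple factor.
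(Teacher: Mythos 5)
The paper does not prove Lemma~\ref{lemma generation at level 1}; it imports it from \cite{BKU}, so there is no in-house proof to compare against. Assessed on its own, your proposal has a genuine gap precisely at the step you present as the heart of the argument. The Killing-form computation correctly shows that $X\in\gfrak^0$ with $B(X,[\gfrak^1,\gfrak^{-1}])=0$ satisfies $[X,\gfrak^{\pm1}]=0$. But the next sentence---\emph{``the ideal of $\gfrak$ generated by such an $X$ therefore stays inside $\gfrak^0$''}---does not follow: the ideal contains $[\gfrak^j,X]\subset\gfrak^j$ for every $j$, and from $[X,\gfrak^{\pm1}]=0$ you can conclude nothing about $[X,\gfrak^{\pm j}]$ for $|j|\ge2$ unless you already know that $\gfrak^{j}$ is spanned by iterated brackets of $\gfrak^{1}$, which is exactly (half of) what the lemma asserts. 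The deduction of $X=0$, and hence of $[\gfrak^1,\gfrak^{-1}]=\gfrak^0$, is therefore circular. The same circularity recurs in the inductive ``propagation'' step, which tacitly presumes $[\gfrak^1,\gfrak^j]=\gfrak^{j+1}$.

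You have in fact diagnosed this yourself in your closing paragraph: the entire content of the lemma is the structural fact that the $\Zbb$-grading of $\gfrak_\Cbb$ attached to a (polarizable) Hodge datum has no weight gaps and is bracket-generated in degree one---equivalently, every simple root has $\mu$-weight $0$ or $1$. That is not a peripheral technicality that can be ``anticipated''; it \emph{is} the lemma, and neither Killing-orthogonality nor simplicity of $\gfrak$ extracts it. Once it is supplied, the preliminary work becomes redundant: generation in degree one gives $\gfrak^{\neq 0}\subset\mathfrak h$ directly; then $\mathfrak h$ is $\ad\gfrak^0$-stable by the Jacobi identity, hence an ideal of $\gfrak_\Cbb$; and the no-compact-factor hypothesis (via the Cartan involution acting as $(-1)^j$ on $\gfrak^j$, so the noncompact part lies in odd degrees) forces $\mathfrak h=\gfrak_\Cbb$. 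Two minor points: the reduction should go to \emph{absolutely} simple factors rather than merely $\Rbb$-simple ones, since the argument manipulates ideals of $\gfrak_\Cbb$---this is legitimate because $\Res_{\Cbb/\Rbb}$ of a complex simple group is not of Hodge type---and the element $X$ lies in $\gfrak^0\subset\gfrak_\Cbb$, so ``ideal of $\gfrak$'' should in any case read ``ideal of $\gfrak_\Cbb$.''
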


\begin{proposition}\label{proposition consequences of level 3 and level 2}
	(1) Let $(\Gbf,\Dcal)$ be a Hodge pair of level 3, and assume that $(\Gbf',\Dcal')$ is a Hodge subpair of Lie algebra $\gfrak'$ such that $(\gfrak')^{j}=\gfrak^j$ as long as $|j|\geq 2$. Then $\Gbf'=\Gbf$.

	(2) Let $(\Gbf,\Dcal)$ be a Hodge datum of level 2, with $\Gbf$ simple as a linear $\Qbb$-group, and assume that $(\Gbf',\Dcal')$ is a Hodge subdatum, such that $(\Lie\Gbf')^{-2}=(\Lie\Gbf)^{-2}$. Then $\Gbf'$ is simple as a linear $\Qbb$-group. 
\end{proposition}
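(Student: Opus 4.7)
The plan is to argue by contradiction in each part, using the polarization on $\gfrak^\ad$ (coming from the Hodge datum) to produce a Hodge-orthogonal decomposition $\gfrak = \gfrak' \oplus \mfrak$ in which $\mfrak$ is naturally a Hodge-graded $\gfrak'$-submodule under the adjoint action.

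For Part (1), the hypothesis $(\gfrak')^j = \gfrak^j$ for $|j|\geq 2$ forces $\mfrak$ to be concentrated in Hodge degrees $-1, 0, 1$, and a direct degree count shows $[\gfrak^{\pm 3}, \mfrak] = 0$. The key step is then to show that the Hodge-graded Lie subalgebra $\hfrak \subseteq \gfrak_\Cbb$ generated by $\gfrak^{\pm 2}$ and $\gfrak^{\pm 3}$ already equals $\gfrak_\Cbb$; since $\hfrak \subseteq \gfrak'$, this would force $\gfrak' = \gfrak$. I plan to deduce this from the bracket surjectivities $[\gfrak^{\mp 2}, \gfrak^{\pm 3}] = \gfrak^{\pm 1}$ and $[\gfrak^{-1}, \gfrak^{1}] + [\gfrak^{-2}, \gfrak^{2}] + [\gfrak^{-3}, \gfrak^{3}] = \gfrak^0$: by ad-invariance of the Killing form on $\gfrak^\ad$ (which provides a polarization), the first identity is equivalent to the injectivity of $\ad(\gfrak^{-2}): \gfrak^{-1} \to \gfrak^{-3}$, and this follows from Lemma~\ref{lemma generation at level 1} together with $\gfrak^{-3} \neq 0$ at level $3$, which forces the iterated bracket $\gfrak^{-1} \otimes \gfrak^{-1} \otimes \gfrak^{-1} \to \gfrak^{-3}$ to be surjective and hence rules out a nonzero kernel.

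For Part (2), suppose $(\gfrak')^\der = \gfrak'_1 \oplus \gfrak'_2$ is a $\Qbb$-rational decomposition into two nontrivial commuting $\Qbb$-simple factors. Since the level is $2$, the hypothesis combined with Hodge symmetry gives $(\gfrak')^j = \gfrak^j$ for all $|j| \geq 2$, and the splitting induces a $\Qbb$-rational direct sum decomposition $\gfrak^{-2} = (\gfrak'_1)^{-2} \oplus (\gfrak'_2)^{-2}$. The $\Qbb$-simplicity of $\gfrak$ implies that the $\Rbb$-simple summands of $\gfrak_\Rbb = \oplus_\sigma \hfrak_\sigma$ form a single $\Gal(\Qac/\Qbb)$-orbit, and consequently $(\hfrak_\sigma)^{-2} \neq 0$ for every $\sigma$; combined with the irreducibility of $(\hfrak_\sigma)^{-2}$ as a $\hfrak_\sigma^0$-module (a structural fact for $\Rbb$-simple Hodge-Lie algebras of level $2$) and the commuting condition $[(\gfrak'_1)_\sigma, (\gfrak'_2)_\sigma] = 0$ inside each $\hfrak_\sigma$, Galois descent will force one of the $(\gfrak'_i)^{-2}$ to vanish globally. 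Without loss of generality, $(\gfrak'_2)^{-2} = 0$, and by polarization $(\gfrak'_2)^{2} = 0$ as well, i.e.\ $\gfrak'_2$ is of level $\leq 1$ and commutes with both of $\gfrak^{\pm 2}$. Combining the relation $[\gfrak^{\pm 1}, \gfrak^{\pm 1}] = \gfrak^{\pm 2}$ (a consequence of Lemma~\ref{lemma generation at level 1} at level $2$) with Jacobi-identity arguments and nondegeneracy of the Hodge polarization on $\gfrak$, one propagates the commutation of $\gfrak'_2$ with $\gfrak^{\pm 2}$ to all of $\gfrak$, forcing $\gfrak'_2 \subseteq Z(\gfrak) = 0$ and contradicting nontriviality.

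The main obstacle I anticipate is establishing the bracket surjectivity $[\gfrak^{\mp 2}, \gfrak^{\pm 3}] = \gfrak^{\pm 1}$ (and its $\gfrak^0$-counterpart) in Part (1), which converts Lemma~\ref{lemma generation at level 1}'s abstract generation into concrete presentations via polarizability; and the commutation-propagation argument in Part (2), which requires delicate use of the polarization together with Galois descent on the $\Rbb$-simple factors of $\gfrak$.
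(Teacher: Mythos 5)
The paper does not actually prove this proposition; it is stated as a quoted result from \cite{BKU} (Propositions 7.5 and 7.8 there) without proof, and then the paper proceeds directly to the variant Lemma~\ref{lemma variant of 7.8}, whose own proof also defers the key step to \cite[Proposition 7.8]{BKU}. So there is no in-paper proof to compare against, and your proposal must be evaluated on its own merits.

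As written, the proposal has genuine gaps in both parts. In Part (1), you correctly reduce the bracket surjectivity $[\gfrak^{-2},\gfrak^3]=\gfrak^1$ to the vanishing of $\{x\in\gfrak^{-1}:[x,\gfrak^{-2}]=0\}$ via the polarization. But the step ``Lemma~\ref{lemma generation at level 1} forces the iterated bracket $\gfrak^{-1}\otimes\gfrak^{-1}\otimes\gfrak^{-1}\to\gfrak^{-3}$ to be surjective and hence rules out a nonzero kernel'' does not follow. Surjectivity of that triple bracket is a statement about its collective image; it is compatible with the existence of a single nonzero $x\in\gfrak^{-1}$ annihilating all of $\gfrak^{-2}$, since other elements of $\gfrak^{-1}$ could still fill up $\gfrak^{-3}$. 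You would need a separate argument (e.g.\ showing that the $\gfrak^0$-submodule of $\gfrak^{-1}$ killed by $\ad\gfrak^{-2}$ generates a proper ideal, contradicting simplicity of the relevant $\Rbb$- or $\Cbb$-factor) to conclude. The companion identity $\sum_j[\gfrak^{-j},\gfrak^j]=\gfrak^0$ is fine (it follows from semisimplicity and the nondegenerate pairing), but it alone is not enough.

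In Part (2) there are two problems. First, the claim that $(\hfrak_\sigma)^{-2}\neq0$ for every real place $\sigma$ does not follow from $\Qbb$-simplicity of $\gfrak$: Galois conjugacy permutes the $\Rbb$-factors $\hfrak_\sigma$ of $\gfrak_\Rbb$, but the Hodge cocharacter $\mu$ is not Galois-equivariant, so the Hodge gradings on the various $\hfrak_\sigma$ need not match; indeed Definition~\ref{definition hodge lie algebra} defines the level of a simple $\Qbb$-Hodge-Lie algebra as the \emph{maximum} over $\sigma$, so level 2 only guarantees that at least one $(\hfrak_\sigma)^{-2}$ is nonzero. Second, the assertion that $(\hfrak_\sigma)^{-2}$ is irreducible as a $\hfrak_\sigma^0$-module is stated as a ``structural fact'' without proof; it is not a general property of $\Zbb$-gradings of simple real Lie algebras (it fails when the associated parabolic $\gfrak^{\geq 0}$ is not maximal), and you would need to argue that the Hodge grading in the level-2 polarized setting has this property. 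The subsequent Galois-descent dichotomy and the commutation-propagation step both lean on these unproved claims, so the contradiction is not yet reached.
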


\section{Rigidity and finiteness}\label{sec-3}
This section addresses Question (A) by investigating the rigidity of polarized moduli spaces $\mathcal{M}_h$ and their non-rigid locus $\mathrm{NRL}(\mathcal{M}_h)$. We characterize \textit{very general rigidity} (containment in a countable union of proper subvarieties) and \textit{generic rigidity} (finiteness of the Zariski closure). Our results are threefold:
(1) Theorem~\ref{thm-very-gen-rig} establishes very general rigidity assuming a $\mathbb{Q}$-simple derived Mumford--Tate group and a generically finite period map; 
(2) Theorem~\ref{thm-generic-rig} establishes generic rigidity if the period domain further lacks factors of rank $\ge 2$; 
(3) Corollary~\ref{geo-Bombieri-Lang} provides a geometric Bombieri--Lang type finiteness for morphisms from a fixed curve. 
We conclude by showing that $\mathrm{NRL}$ is not Zariski dense for $\overline{\mathcal{M}}_g$ ($g \ge 10$) and for smooth hypersurfaces $\mathcal{M}_{d,n}$ of sufficiently high degree ($d \ge 5$ for $n=2$; $d \ge 6$ for $n \ge 3$).

\subsection{Rigidity of morphisms mapping into algebraic varieties}
Let $M$ be a normal variety. Consider a morphism
\[
\varphi:\, U \to M,
\]
where $U$ is a normal variety with $0 < \mathrm{dim}\,U < \mathrm{dim}\,M$, and $\varphi$ is {\it generically finite} onto its image in $M$.

\begin{definition}
  We say $\varphi:\,U \to M$ is {\it non-rigid}, if there exists a pointed scheme $(T,o)$ with $\mathrm{dim}\,(T)_{\mathrm{red}}>0$, such that $\varphi$ can be extended to a morphism
  \[
  \Phi:\, U \times T \to M,
\]
i.e. $\Phi|_{U \times \{o\}} = \varphi$, where $\Phi$ is generically finite onto its image.\\[.1cm]
Otherwise, $\varphi$ is said to be {\rm rigid}.
\end{definition}

\subsection{Non-rigid locus}\label{sec nrl}

For a normal variety $M$, consider the following set
\[
\mathrm{NRL}(M):= \bigcup_{\varphi} \varphi(U)
\]
where $\varphi$ ranges over all the {\it non-rigid} morphisms.
\begin{example}\label{exam-NRL}
  \begin{itemize}
	\item[(i)] Let $M= \mathbb{P}^n$ ($n \geq 2$). Note that through each point in $\mathbb{P}^n$ one can find a family of lines passing it. Thus $\mathrm{NRL}(\mathbb{P}^n) = \mathbb{P}^n$.
    \item[(ii)] Let $M$ be a projective surface which is a compact ball quotient. Then its cotangent bundle $\Omega^1_M$ is ample. Using deformation theory of morphisms and the negativity of $T_M$ it is easy to verify that every curve mapping into $M$ is rigid. Thus $\mathrm{NRL}(M) = \varnothing$.
  \end{itemize}
\end{example}
\begin{definition}\label{rig-prop-general}
  We say that $M$ has the {\rm \textbf{very general rigidity}} property if the non-rigid locus $\mathrm{NRL}(M)$ is contained in a {\rm countable} union of closed proper irreducible subvarieties of $M$.\\[.1cm]
We say that $M$ has the {\rm \textbf{generic rigidity}} property if the Zariski closure of $\mathrm{NRL}(M)$ in $M$ is a {\rm finite} union of closed proper irreducible subvarieties of $M$.
\end{definition}

\begin{remark}
Note that the notion of rigidity property in Definition~\ref{rig-prop-general} is different from the rigidity of $M$ itself {\rm as a scheme}. For instance, $\mathbb{P}^n$ is rigid as a scheme. However, it is far from having the very general/generic rigidity property since $\mathrm{NRL}(\mathbb{P}^n)=\mathbb{P}^n$ (see Example~\ref{exam-NRL} above).
\end{remark}

Let $Z \subset M$ be a proper closed subscheme. We formulate the {\it relative non-rigid locus} as the following set
\[
\mathrm{NRL}(M,Z):= \bigcup_{\varphi} \varphi(U)
\]
where $\varphi$ ranges over all the non-rigid morphisms whose images are {\it not} contained in $Z$.

\begin{lemma}\label{rel-NRL}
$M$ has the very general (resp. generic) rigidity property if and only if the relative non-rigid locus $\mathrm{NRL}(M,Z)$ is contained in a countable (resp. finite) union of closed proper irreducible subvarieties of $M$.
\end{lemma}
\begin{proof}
  It follows from the following chain of inclusions
  \[
 \mathrm{NRL}(M,Z) \subset \mathrm{NRL}(M) \subset \mathrm{NRL}(M,Z) \cup Z.
  \]
\end{proof}

We observe that properties of very general and generic rigidity are preserved under finite \'{e}tale covers.

\begin{lemma}\label{fet-cover}
Let $\psi:\,\tilde{M} \to M$ be a finite \'{e}tale cover. Then $M$ has the very general (resp. generic) rigidity property if and only if $\tilde{M}$ has the very general (resp. generic) rigidity property.
\end{lemma}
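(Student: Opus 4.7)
The plan is to prove the set-theoretic identity
\[
\mathrm{NRL}(\tilde{M}) = \psi^{-1}(\mathrm{NRL}(M)),
\]
and then deduce both directions of the lemma from this identity together with the standard facts that a finite \'etale map is closed, open, preserves dimensions, and sends (resp.\ pulls back) irreducible components to finite unions of irreducible components.

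The inclusion $\psi(\mathrm{NRL}(\tilde M)) \subset \mathrm{NRL}(M)$ is the easy direction. Given a non-rigid $\tilde\varphi:\tilde U \to \tilde M$ with a generically finite deformation $\tilde\Phi:\tilde U \times T \to \tilde M$, I would simply post-compose with $\psi$: since $\psi$ is finite, $\psi\circ\tilde\Phi$ remains generically finite onto its image, and since $\psi$ is \'etale the dimension inequality $0<\dim \tilde U<\dim \tilde M=\dim M$ is preserved. So $\psi\circ\tilde\varphi:\tilde U\to M$ is a non-rigid morphism and the image of every point of $\mathrm{NRL}(\tilde M)$ lies in $\mathrm{NRL}(M)$.

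The converse inclusion $\psi^{-1}(\mathrm{NRL}(M))\subset \mathrm{NRL}(\tilde M)$ is the main work. Given a non-rigid $\varphi:U\to M$ with deformation $\Phi:U\times T\to M$, a point $u\in U$, and a lift $y\in\psi^{-1}(\varphi(u))$, I would form the \'etale base change $\tilde X := (U\times T)\times_M \tilde M$ and take the connected component $\tilde W\subset \tilde X$ containing the chosen lift of $(u,o)$. Then $\tilde W\to U\times T$ is connected finite \'etale and $\tilde W$ carries a natural map to $\tilde M$, but $\tilde W$ need not be a \emph{product} cover of $U\times T$, so it does not yet present itself as a deformation of a lift of $\varphi$.

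The hard part is precisely this repair step: extracting an honest product cover $\tilde U\times \tilde T$ from $\tilde W$. I would exploit the Galois description of finite \'etale covers: $\tilde W$ corresponds to a finite-index subgroup $H\subset\pi_1(U\times T)=\pi_1(U)\times\pi_1(T)$, and the subgroups $H_U:=H\cap \pi_1(U)$ and $H_T:=H\cap \pi_1(T)$ have finite index in their respective factors (as each injects into $\pi_1(U\times T)/H$). Crucially, the identity $(h_U,1)(1,h_T)=(h_U,h_T)$ forces $H_U\times H_T\subset H$, producing a tower $\tilde U \times \tilde T \to \tilde W \to \tilde M$ in which $\tilde U$ and $\tilde T$ are the connected finite \'etale covers of $U$ and $T$ associated with $H_U$ and $H_T$, and $\tilde U$ contains the fixed lift of $u$ mapping to $y$. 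The composition furnishes a deformation $\tilde\Phi:\tilde U\times \tilde T\to \tilde M$ of the lift $\tilde\varphi:\tilde U\to \tilde M$; generic finiteness and the dimension inequality descend through the (finite \'etale) intermediate base changes. Hence $y\in \mathrm{NRL}(\tilde M)$.

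Once the equality $\mathrm{NRL}(\tilde M)=\psi^{-1}(\mathrm{NRL}(M))$ is in hand, the rigidity statements are formal. A countable (resp.\ finite) covering of $\mathrm{NRL}(M)$ by closed strict irreducible subschemes $\{Z_i\}$ pulls back to a countable (resp.\ finite) family $\{\psi^{-1}(Z_i)\}$ of closed subschemes of $\tilde M$, each of which decomposes into finitely many irreducible components of the same dimension $\dim Z_i<\dim M=\dim \tilde M$, hence strict. Conversely, since $\psi$ is finite (hence closed) and \'etale (hence open), $\psi$ maps closed strict irreducibles of $\tilde M$ to closed strict irreducibles of $M$ of the same dimension, and $\overline{\psi(S)}=\psi(\overline S)$, so the Zariski closure and the cardinality of the index set are respected. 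Both implications of the lemma, in their very general and generic versions, follow.
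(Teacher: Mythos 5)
Your proof is correct and takes a genuinely more direct route than the paper's. Both arguments ultimately hinge on producing a connected \emph{product} \'etale cover of $U\times T$ that dominates a connected component of $(U\times T)\times_M\tilde M$, but the executions differ substantially. The paper first reduces to $U,T$ smooth quasi-projective \emph{curves}, compactifies, applies Stein factorization to the proper map $\tilde Y^{\textasciicircum}\to Y_2$ to establish connected fibers, invokes the fibration exact sequence of fundamental groups, and finally cites the Riemann Existence Theorem to conclude the fibration $\tilde Y\to Y_2$ is isotrivial (hence becomes a product after a further cover of $Y_2$). You bypass all of this geometry with a single group-theoretic observation: for $H\subset \pi_1(U)\times\pi_1(T)$ of finite index, the intersections $H_U=H\cap\pi_1(U)$ and $H_T=H\cap\pi_1(T)$ are of finite index and satisfy $H_U\times H_T\subset H$, yielding the desired product cover $\tilde U\times\tilde T\to\tilde W$. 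This is cleaner, requires no curve reduction or compactification, works uniformly in any dimension, and in fact gives the sharper identity $\mathrm{NRL}(\tilde M)=\psi^{-1}(\mathrm{NRL}(M))$, whereas the paper only establishes the surjectivity $\psi(\mathrm{NRL}(\tilde M))=\mathrm{NRL}(M)$. (Indeed, the subgroup $H_U$ you construct is exactly the group which, in the paper's argument, appears implicitly as $\pi_1(F)\hookrightarrow\pi_1(Y_1)$ for the general fiber $F$.) One tiny phrasing slip: in the parenthetical justifying finite index, it is $\pi_1(U)/H_U$ and $\pi_1(T)/H_T$ that inject into $\pi_1(U\times T)/H$, not $H_U$ and $H_T$ themselves; the conclusion you draw is nevertheless correct.
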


\begin{proof}
  Since $\psi$ maps $\mathrm{NRL}(\tilde{M})$ into $\mathrm{NRL}(M)$, the ``only if'' part is clear. To verify the ``if'' part, it suffices to show that the restriction of $\psi$ on $\mathrm{NRL}(\tilde{M})$ maps {\it surjectively} onto $\mathrm{NRL}(M)$.

  Consider a non-rigid morphism $Y_1 \to M$, which extends to a morphism
  \[
  Y:=Y_1 \times Y_2 \to M
\]
which is generically finite onto its image. Without loss of generality, one can assume that $Y_i$ are smooth quasi-projective curves.
By taking fiber product, we obtain the following cartesian square
\[
  \xymatrix{
    \tilde{Y} \ar[r] \ar[d]_p  & \tilde{M} \ar[d]_{\psi} \\
    Y=Y_1 \times Y_2 \ar[r] & M.
    }
\]
We will show that $\tilde{Y} \to \tilde{M}$ gives a {\it non-rigid} morphism mapping to $\tilde{M}$, which dominates the non-rigid morphism $Y_1 \times Y_2 \to M$. This will confirm the surjectivity of $\psi:\,\mathrm{NRL}(\tilde{M}) \to \mathrm{NRL}(M)$.

Note that $\mathrm{Im}[\pi_1(\tilde{Y}) \to \pi_1(Y_1) \times \pi_1(Y_2)]$ is a subgroup of finite index. Therefore, after replacing curves $Y_i$ by some finite \'{e}tale covers if necessary, one can assume that the projection
\[
\pi_1(\tilde{Y}) \to \pi_1(Y_1) \times \pi_1(Y_2) \twoheadrightarrow \pi_1(Y_i)
\]
is surjective for $i=1,2$.

We claim that the composed morphism $\tilde{Y} \xrightarrow{p} Y_1 \times Y_2 \xrightarrow{\mathrm{pr}_2} Y_2$ has {\it connected} fibers. Let $\bar{Y}_1$ be the projective completion of the first curve. One can find a partial compactification $\tilde{Y}^{\textasciicircum} \supset \tilde{Y}$ such that $p:\,\tilde{Y} \to Y$ extends to a finite ramified cover
\[
p^{\textasciicircum}:\, \tilde{Y}^{\textasciicircum} \to \bar{Y}_1 \times Y_2.
\]
Now we consider the Stein factorization of the following proper morphism
\[
\tilde{Y}^{\textasciicircum} \xrightarrow{p^{\textasciicircum}} \bar{Y}_1 \times Y_2 \xrightarrow{\mathrm{pr}_2} Y_2,
\]
which gives the following commutative square
\[
  \xymatrix{
    \tilde{Y}^{\textasciicircum} \ar[d]_-{p^{\textasciicircum}} \ar[r]^{\alpha} & \tilde{Y}_2  \ar[d]^{\beta} \\
    \bar{Y}_1 \times Y_2 \ar[r]^-{\mathrm{pr}_2} & Y_2.
      }
\]
Here $\alpha$ is a proper morphism with connected fibers and $\beta$ is a finite morphism. Thus the following maps between fundamental groups
\[
\pi_1(\tilde{Y}) \twoheadrightarrow \pi_1(\tilde{Y}^{\textasciicircum}) \to \pi_1(Y_2)
\]
actually factors through $\beta_*:\, \pi_1(\tilde{Y}_2) \to \pi_1(Y_2)$. Since $\pi_1(\tilde{Y}) \to \pi_1(Y_2)$ is surjective, we know that $\beta_*$ is also surjective. Consequently, $\tilde{Y}_2=Y_2$ and $\beta$ is the identity map.

Therefore, $\alpha:\,\tilde{Y}^{\textasciicircum} \to Y_2$, and thus $\tilde{Y} \to Y_2$, has connected fibers. We notice that $\tilde{Y} \to Y_2$ is also smooth, thus a {\it fiber bundle} under the analytic topology. Let $F$ be a general fiber of $\tilde{Y} \to Y_2$. One has the following short exact sequence
\[
1 \to \pi_1(F) \to \pi_1(\tilde{Y}) \to \pi_1(Y_2) \to 1.
\]
It is clear that the general fiber $F$ is the finite \'{e}tale cover of $Y_1$, corresponding to the subgroup
\[
\mathrm{Im}[\pi_1(F) \hookrightarrow \pi_1(\tilde{Y}) \twoheadrightarrow \pi_1(Y_1) ] \subset \pi_1(Y_1)
\]
of finite index. Let $\pi_1(Y_1) \to S_d$ be the corresponding finite quotient, where $S_d$ is the group of deck transformations of the topological cover $F \to Y_1$. By the Riemann Existence Theorem, the complex structure on $F$ is {\it uniquely} determined by $\pi_1(Y_1) \to S_d$. That means the fibration $\tilde{Y} \to Y_2$ is isotrivial, and thus gives a non-rigid morphism mapping to $\tilde{M}$. This completes the proof.
\end{proof}

In this paper, we are mainly interested in the rigidity properties on the {\it moduli spaces of polarized manifolds}. Let $h \in \mathbb{Q}[\alpha,\beta]$ be the fixed (double) Hilbert polynomial. Recall the moduli stack $\mathcal{M}_h$ whose $\Cbb$-points are described by the following set of isomorphic classes:
\[
\mathcal{M}_h(\mathbb{C}):=  \left\{ (X, \mathcal L)\,  \middle| \,  \xymatrix@C=3.0cm{\ar@{}[r]^-{ X\text{  projective manifold with semi-ample canonical line bundle $\omega_X$, }}_-{\mathcal L  \text{ ample line bundle and with fixed Hilbert polynomial}  \, h(\alpha,\beta)=\chi(\mathcal L^{\alpha} \otimes \omega^{\beta}_X )          }&}
  \middle\} \middle/_{\cong}.  \right.
\]
More precisely, for a scheme $T$, the set $\mathcal{M}_h(T)$ consists of pairs $(f:\,X_T \to T, \mathcal{L})$ whose fibers are all in $\mathcal{M}_h(\mathbb{C})$.

Viehweg proved that the moduli stack $\mathcal{M}_h$ is isomorphic to a separated quotient stack $[H/\mathrm{PGL}_r]$, where $H$ is a quasi-projective scheme of finite type. Moreover, he proved that
\begin{theorem}[Theorem~1.13 in \cite{Vieh95}]
There exists a quasi-projective scheme $M_h$ of finite type, which is the coarse moduli space of the stack $\mathcal{M}_h$.
\end{theorem}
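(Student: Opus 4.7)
The plan is to build the coarse moduli space $M_h$ directly from the presentation $\mathcal{M}_h \cong [H/\mathrm{PGL}_r]$ stated just before the theorem, so that the argument reduces to (a) producing a coarse moduli algebraic space and (b) upgrading this algebraic space to a quasi-projective scheme of finite type. For (a), I would use the Keel--Mori theorem: since $\mathcal{M}_h$ is a separated Deligne--Mumford stack of finite type (its automorphism groups of polarized varieties are finite by the hypothesis that $\omega_X$ is semi-ample and $\mathcal{L}$ is ample, and $[H/\mathrm{PGL}_r]$ is separated because the $\mathrm{PGL}_r$-action on $H$ is proper), Keel--Mori furnishes a coarse moduli algebraic space $\iota:\mathcal{M}_h \to M_h$, of finite type and universal for maps to schemes. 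Since $H \to M_h$ is $\mathrm{PGL}_r$-invariant and $H$ is of finite type, $M_h$ is automatically of finite type.

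For (b), the key is to exhibit an ample line bundle on $M_h$. Following Viehweg's strategy, I would consider, for an effectively parametrized family $(f:X_T \to T, \mathcal{L})$ with fibers in $\mathcal{M}_h(\mathbb{C})$, the determinants of direct images
\[
\lambda_{a,b}(T) := \det f_*\bigl(\mathcal{L}^{\otimes a}\otimes \omega_{X_T/T}^{\otimes b}\bigr)
\]
for suitable large $(a,b)$ chosen so that $\mathcal{L}^{\otimes a}\otimes\omega_{X_T/T}^{\otimes b}$ has vanishing higher direct images and gives a relatively very ample embedding. Viehweg's weak positivity theorems for such direct images, together with the fact that non-isomorphic polarized fibers force genuine variation of these Hodge-theoretic/pluricanonical invariants, promote weak positivity to ampleness on any base parametrizing a family whose moduli map is finite onto its image. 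Descending a sufficiently high tensor power (chosen to trivialize the finite stabilizer characters) along $\iota$ via Kempf's descent criterion then yields an ample line bundle on $M_h$, completing quasi-projectivity.

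The main obstacle is the ampleness upgrade: weak positivity alone is insufficient, and the passage to genuine ampleness on the coarse moduli space requires the full machinery of Viehweg's positivity/ampleness results for moduli of polarized manifolds with semi-ample canonical bundle, which rests on Hodge-theoretic semi-positivity à la Fujita--Kawamata--Kollár and a delicate covering construction. Since the paper cites \cite{Vieh95} precisely for this statement, in an actual proof I would accept this input wholesale and merely sketch why the ample line bundle descends to $M_h$ (finite stabilizers, Kempf descent) rather than rederive Viehweg's positivity theorems. The output is the quasi-projective scheme $M_h$ of finite type with the claimed universal property.
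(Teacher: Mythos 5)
The paper does not prove this statement; it cites it directly as Theorem 1.13 from \cite{Vieh95}, so the implicit reference proof is Viehweg's own construction. Your sketch is mathematically sound but takes a genuinely different route from Viehweg's. Viehweg's book predates the Keel--Mori theorem, so he cannot first produce a coarse algebraic space and then upgrade it: instead he constructs the coarse moduli space directly as a geometric quotient of the Hilbert-scheme parameter space $H$ by $\mathrm{PGL}_r$, using his own ampleness criterion for the existence of quotients, with the requisite ample invertible sheaf built from weak positivity of $\det f_*(\mathcal{L}^{\otimes a}\otimes\omega_{X/T}^{\otimes b})$ promoted to ampleness via covering constructions and Koll\'ar's ampleness lemma. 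Your approach decouples existence of the coarse space (Keel--Mori, using the Deligne--Mumford property from finiteness of polarized automorphism groups plus separatedness from Matsusaka--Mumford) from positivity, which is cleaner conceptually: one only has to show ampleness of a single descended line bundle (via Kempf descent after killing the finite stabilizer characters), rather than running the full GIT-style quotient machinery. However, it does not lighten the hardest step. Ampleness of the Viehweg line bundle on the coarse space still rests on exactly the same weak-positivity and covering-trick input that occupies the bulk of Viehweg's book, and as you correctly flag, your argument must import that wholesale. One minor quibble: the semi-positivity of the relevant direct images is not purely the Hodge-theoretic Fujita--Kawamata statement for $f_*\omega_{X/T}$ but Viehweg's more delicate weak positivity for the twisted higher powers $f_*(\mathcal{L}^{\otimes a}\otimes\omega_{X/T}^{\otimes b})$, which needs the covering construction already at that stage, not only for the passage from weak positivity to ampleness.
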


Similarly, one can consider the \textbf{non-rigid locus on the moduli space}. Define the following set
\[
\mathrm{NRL}(\mathcal{M}_h):= \bigcup_{\varphi} \varphi(U)
\]
where $\varphi:\,U \to \mathcal{M}_h$ ranges over all the {\it non-rigid} morphisms mapping into the moduli stack $\mathcal{M}_h$. Note that the non-rigid morphism $\varphi:\,U \to \mathcal{M}_h$ can be identified with the classifying morphism of a {\it non-rigid family} $f:\,V \to U$.

\begin{definition}\label{rig-prop-moduli}
  We say that the moduli stack $\mathcal{M}_h$ has the {\rm \textbf{very general rigidity}} property if the non-rigid locus $\mathrm{NRL}(\mathcal{M}_h)$ is contained in a {\rm countable} union of closed proper irreducible subvarieties of $M_h$.\\[.1cm]
We say that $\mathcal{M}_h$ has the {\rm \textbf{generic rigidity}} property if the Zariski closure of $\mathrm{NRL}(\mathcal{M}_h)$ in $M_h$ is a {\rm finite} union of closed proper irreducible subvarieties of $M_h$.
\end{definition}

The first example of moduli spaces is $\mathcal{M}_g$, the moduli space of genus $g$ algebraic curves. Shafarevich's conjecture can be reformulated as the finiteness of the following set
\[
\{ \varphi:\,U \to \mathcal{M}_g\,|\, \text{$\varphi$ nonconstant} \},
\]
where $U$ is a fixed quasi-projective curve.

Parshin and Arakelov proved this conjecture by dividing the finiteness statement into two parts: {\it Boundedness} and {\it Rigidity}. In their strategy, one first verifies that the above set can be partitioned into finitely many deformation equivalence classes (``Boundedness''), and then proves that there is exactly one element in each deformation equivalence class (``Rigidity'').

Parshin-Arakelov's strategy is the original motivation of studying rigidity properties on moduli spaces. In particular, their results implies that
\[
\mathrm{NRL}(\mathcal{M}_g) = \varnothing.
\]

It is easy to see that the genuine rigidity no longer holds when the moduli spaces classify higher dimensional varieties. For instance, a product of an elliptic curve and an abelian variety of dimension $g-1$ can be deformed along two factors independently, which gives a natural embedding $\mathcal{A}_1 \times \mathcal{A}_{g-1} \hookrightarrow \mathcal{A}_g$. Therefore,
\[
\mathrm{NRL}(\mathcal{A}_g) \neq \varnothing.
\]

\subsection{Very general rigidity}
By the above discussion, it is natural to study the distribution of non-rigid loci on various moduli spaces. In this paper, we prove the following result.
\begin{theorem}[Very general rigidity]\label{thm-very-gen-rig}
  Suppose the universal family $f:\,\mathcal{X} \to \mathcal{M}_h$ on the moduli stack carries a polarized $\mathbb{Z}$-VHS satisfying
  \begin{itemize}
    \item[(i)] the derived Mumford-Tate group is $\mathbb Q$-simple;
    \item[(ii)] the associated period map
          \[
        \Phi:\,\mathcal{M}_h \to \Gamma \backslash \mathcal{D}
          \]
          is generically finite onto its image.
  \end{itemize}
 Then $\mathcal{M}_h$ has the {\rm very general rigidity} property.
\end{theorem}

There are plenty of examples of moduli spaces satisfying the conditions in Theorem~\ref{thm-very-gen-rig}, like the moduli space of principally polarized abelian $g$-folds $\mathcal{A}_g$, moduli spaces of polarized Calabi-Yau manifolds, moduli spaces of hypersurfaces of high degree in $\mathbb{P}^N$, etc.

Now we give the proof of Theorem~\ref{thm-very-gen-rig} by relating the non-rigid locus with the Hodge locus of the associated VHS.
\begin{proposition}[$\mathbb{Q}$-nonsimplicity]\label{Q-nonsimplicity}
Let $Y=Y_1 \times Y_2$ be a product variety ($\mathrm{dim}\,Y_i>0$). Let $\mathbb{V}$ be a polarized $\mathbb{Z}$-VHS on $Y$ such that the induced period map is generically finite onto its image. Then the generic derived Mumford-Tate group $\Gbf^{\rm der}$ of $\mathbb{V}$, as well as the algebraic monodromy group $\Hbf$ of $\mathbb{V}$, is not $\mathbb{Q}$-simple.
\end{proposition}
\begin{proof}
Let $\Hbf_1,\ \Hbf_2$ be the algebraic monodromy groups of $\mathbb V|_{Y_1\times\{{\rm a\ generic\ point\ in\ }Y_2\}}$ and $\mathbb V|_{\{{\rm a\ generic\ point\ in\ }Y_1\}\times Y_2}$ respectively. Let $$\rho:\pi_1(Y_1\times Y_2)=\pi_1(Y_1)\times \pi_1(Y_2)\to \Hbf$$ be the monodromy representation. Clearly, $\bf H_1$ and $\Hbf_2$ are $\mathbb Q$-Zariski closure of the image of the monodromy representations of $\pi_1(Y_1)$ and $\pi_1(Y_2)$ respectively. Thus $\rho(\pi_1(Y_1))\subset Z_\Hbf(\rho(\pi_1(Y_2))),$ where $Z_\Hbf(\cdot)$ is the centralizer group of a subgroup of $\Hbf.$ Note that $Z_\Hbf(\rho(\pi_1(Y_2)))=Z_\Hbf(\Hbf_2)$ and it is an algebraic group. Thus we have $\rho(\pi_1(Y_1))\subset Z_\Hbf(\Hbf_2)$ and we have $\Hbf_1\subset Z_\Hbf(\Hbf_2)$, which proves $\Hbf_1\times \Hbf_2$ is a subgroup of $\Hbf$. However, $\rho(\pi_1(Y_1\times Y_2))$ as a subset of $\Hbf_1\times \Hbf_2$ is Zariski dense in $\Hbf$ and thus we have $\Hbf=\Hbf_1\times \Hbf_2.$
\end{proof}

\begin{corollary}\label{cor-Q-nonsimplicity}
Let $Y=Y_1 \times Y_2$ and $\mathbb{V}$ be the same as in Proposition~\ref{Q-nonsimplicity}. Let $\psi:\,Y' \to Y$ be a finite \'{e}tale cover. Then the generic Mumford-Tate group of $\psi^*\mathbb{V}$, as well as the algebraic monodromy group of $\psi^*\mathbb{V}$, is not $\mathbb{Q}$-simple.
\end{corollary}
\begin{proof}
One only needs to notice that $\pi_1(Y')$ is a finite index subgroup of $\pi_1(Y)$.
\end{proof}

\begin{proof}[Proof of Theorem~\ref{thm-very-gen-rig}]\namelabel{Proof}
  We first recall the construction of the period map associated with the universal family over the moduli stack $\mathcal{M}_h$.

  Recall that $\mathcal{M}_h$ is isomorphic to the separated quotient stack $[H/\mathrm{PGL}_r]$, where $H$ is a quasi-projective scheme of finite type. Moreover, $H$ is a subscheme of the Hilbert scheme parametrizing closed subschemes of $\mathbb{P}^{r-1}$ with Hilbert polynomial $h$, and $H$ is $\mathrm{PGL}_r$-invariant subset parametrizing closed subschemes which are smooth and connected with a fixed polarization. Therefore, $H$ carries a universal family of smooth polarized manifolds. This family induces a period map
  \[
\Phi_H:\,H \to \Gamma \backslash \mathcal{D}
  \]
  which is {\it invariant} under the $\mathrm{PGL}_r$-action on $H$. Then $\Phi_H$ can be descended to a holomorphic map $\Phi_{\mathcal{M}_h}$ from the moduli stack $\mathcal{M}_h=[H/\mathrm{PGL}_r]$ mapping to the period domain. By the assumption, $\Phi_{\mathcal{M}_h}$ is generically finite onto its image. Let $Z \subset \mathcal{M}_h$ be the exceptional set of $\Phi_{\mathcal{M}_h}$, i.e. the locus where $\Phi_{\mathcal{M}_h}$ fails to be quasi-finite.

  Now we consider the relative non-rigid locus $\mathrm{NRL}(\mathcal{M}_h,Z)$. To show the very general rigidity property of $\mathcal{M}_h$, it suffices to show that $\mathrm{NRL}(\mathcal{M}_h,Z)$ is contained in a countable union of closed proper irreducible substacks of $\mathcal{M}_h$ by Lemma~\ref{rel-NRL}.

  By its definition, $\mathrm{NRL}(\mathcal{M}_h,Z)$ is generated by morphisms from product varieties:
  \[
 \varphi:\, Y_1 \times Y_2 \to \mathcal{M}_h,
\]
where $\varphi$ is generically finite onto its image and the image of $\varphi$ is not contained in $Z$. Consider the polarized $\mathbb{Z}$-VHS $\mathbb{V}_Y$ on $Y:=Y_1 \times Y_2$ induced from the following period map
\[
Y=Y_1 \times Y_2 \xrightarrow{\varphi} \mathcal{M}_h \xrightarrow{\Phi_{\mathcal{M}_h}} \Gamma \backslash \mathcal{D},
\]
which is still generically finite onto its image.

Denote by $Y_H$ the fiber product $Y \times_{\mathcal{M}_h}H$. Let $S$ be the desingularization of the quasi-projective scheme $H$. Let $Y_S$ be the fiber product $Y_H \times_H S$. Then we have the following commutative diagram
\begin{align}
  \label{diag_SHY}
  \xymatrix{
    Y_S \ar[d]_{p_S} \ar[r]^-{\varphi_S} & S \ar[d]_{\delta} \ar[rd]^-{\Phi} & \\
    Y_H \ar[d]_{p_H} \ar[r]^-{\varphi_H} & H \ar[d]_p \ar[r]^-{\Phi_H} & \Gamma \backslash \mathcal{D} \\
    Y  \ar[r]^-{\varphi} & \mathcal{M}_h \ar[ru]_{\Phi_{\mathcal{M}_h}} &
    }
\end{align}
where $\Phi:= \Phi_H \circ \delta$ is the induced period map on the smooth quasi-projective variety $S$. Then we have the isomorphism of polarized $\mathbb{Z}$-VHS $p^*_H\mathbb{V}_Y \cong \varphi^*_H \mathbb{V}_H$, where $\mathbb{V}_H$ is the polarized $\mathbb{Z}$-VHS associated to the period map $\Phi_H$.

Denote by $\mathbb{V}_S$ the polarized $\mathbb{Z}$-VHS associated with the period map $\Phi:\, S \to \Gamma \backslash \mathcal{D}$. From \eqref{diag_SHY}, we know that
\[
\varphi^*_S\mathbb{V}_S = \varphi^*_S \delta^*\mathbb{V}_H = p^*_S \varphi^*_H\mathbb{V}_H = p^*_Sp^*_H\mathbb{V}_Y
\]
According to Proposition~\ref{Q-nonsimplicity}, the generic Mumford-Tate group $\mathrm{MT}(\mathbb{V}_Y)$ is not $\mathbb Q$-simple. That implies $\mathrm{MT}(\varphi^*_S\mathbb{V}_S)$ is also not $\mathbb{Q}$-simple.

On the other hand, we know that the generic Mumford-Tate group of $\mathbb{V}_S$ is $\mathbb Q$-simple, since the generic Mumford-Tate group associated to $\Phi_{\mathcal{M}_h}$ is $\mathbb Q$-simple. Therefore, $\mathrm{MT}(\varphi^*_S\mathbb{V}_S)$ is a proper subgroup of $\mathrm{MT}(\mathbb{V}_S)$, which means that $\varphi_S(Y_S) \subset \mathrm{HL}(S,\mathbb{V}_S)$.

Now the celebrated theorem of Cattani-Deligne-Kaplan in \cite{CDK} tells us that the Hodge locus $\mathrm{HL}(S,\mathbb{V}_S)$ is a countable union of closed irreducible algebraic subvarieties of $S$. Then $\varphi_H(Y_H) \subset \delta(\mathrm{HL}(S,\mathbb{V}_S))$, and $\delta(\mathrm{HL}(S,\mathbb{V}_S))$ is a countable union of closed irreducible algebraic subvarieties of $H$.

We say that a closed subvariety $\Sigma \subset H$ is {\it saturated}, if for each point $x \in \Sigma$, the $\mathrm{PGL}_r$-orbit of $x$ is also contained in $\Sigma$.
Then each irreducible component of $\delta(\mathrm{HL}(S,\mathbb{V}_S))$ is saturated since the restriction of the period map $\Phi_H$ on a $\mathrm{PGL}_r$-orbit is constant.

Therefore, the inclusion $\delta(\mathrm{HL}(S,\mathbb{V}_S)) \hookrightarrow H$ is $\mathrm{PGL}_r$-equivariant. Consequently, $\delta(\mathrm{HL}(S,\mathbb{V}_S))$ is mapped to a countable union of closed irreducible substacks of $\mathcal{M}_h$ under the projection $p:\,H \twoheadrightarrow \mathcal{M}_h=[H/\mathrm{PGL}_r]$. Note that $\varphi(Y) \subset p(\delta(\mathrm{HL}(S,\mathbb{V}_S)))$ by \eqref{diag_SHY}. This implies that
\[
\mathrm{NRL}(\mathcal{M}_h,Z) \subset p(\delta(\mathrm{HL}(S,\mathbb{V}_S))),
\]
which confirms the very general rigidity property of $\mathcal{M}_h$.
\end{proof}

\subsection{Generic rigidity}
If we put further restriction on the Mumford-Tate domain associated with the polarized $\mathbb{Z}$-VHS, Theorem~\ref{thm-very-gen-rig} can be strengthened to the following form.

\begin{theorem}[Generic rigidity]\label{thm-generic-rig}
  Let the moduli space $\mathcal{M}_h$ and the polarized $\mathbb{Z}$-VHS $\varphi$ be the same as in Theorem~\ref{thm-very-gen-rig}. If the Mumford-Tate domain $\mathcal{D}$ of $\varphi$ is not a bounded symmetric domain of rank $\geq 2$, then $\mathcal{M}_h$ has the {\rm generic rigidity} property.
\end{theorem}
In this section, we prove Theorem~\ref{thm-generic-rig}. In addition to the techniques utilized in the proof of Theorem~\ref{thm-very-gen-rig}, we also conduct a case-by-case analysis of the different levels of the Hodge-Lie algebra.

We will see that both the level one case and the cases with levels greater than two are relatively straightforward to handle. Therefore, we begin by focusing on the analysis of the level two case.
\begin{theorem}[Level two case]\label{level-two-atyp}
  Let $\mathbb{V}$ be a polarized $\mathbb{Z}$-VHS on a smooth connected complex quasi-projective variety $S$, with the generic Hodge datum $(\Gbf,\Dcal)$. Assume that the Mumford-Tate group $\Gbf$ is $\mathbb{Q}$-simple. Assume that the level of the Hodge-Lie algebra of the algebraic monodromy group $\Hbf=\Gbf$ is two.

  Let $\varphi_S:\,Y_S \to S$ be a morphism from a quasi-projective variety $Y_S$ mapping to $S$. Assume that there is a surjective morphism from $Y_S$ to a product variety $Y=Y_1 \times Y_2$ such that $\varphi^*_S\mathbb{V}$ can be descended to a polarized $\mathbb{Z}$-VHS $\mathbb{V}_Y$ on $Y$, and the period map associated with $\mathbb{V}_Y$ is generically finite onto its image. Let $W \supset \varphi_S(Y_S)$ be a weakly special subvariety of $S$ containing the image of $\varphi_S$, such that $\varphi_S:\,Y_S\to W$ is monodormically generic (i.e., the algebraic monodromy group of $\varphi^*_S\mathbb{V}$ coincides with the algebraic monodromy group of $\mathbb{V}|_W$). Then $W$ is {\it monodromically atypical}.
\end{theorem}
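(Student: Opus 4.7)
\vspace{0.5em}\noindent\textbf{Plan.} The strategy combines three ingredients: the monodromy analysis of product bases from Proposition~\ref{R-nonsimplicity}, the Hodge-theoretic constraint provided by Lemma~\ref{lemma variant of 7.8}, and a dimension count comparing $\Dcal_{\Hbf_W}$ with $\Dcal$ in terms of the Hodge components $\gfrak^{-1},\gfrak^{-2}$ of the associated Hodge--Lie algebras.

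First I would identify $\Hbf_W$, the algebraic monodromy group of $\Vbb|_W$, with the algebraic monodromy group of $\Vbb_Y$. The monodromically generic hypothesis says $\Hbf_W$ equals the algebraic monodromy group of $\varphi_S^*\Vbb$; since $Y_S\twoheadrightarrow Y=Y_1\times Y_2$ is surjective and $\varphi_S^*\Vbb$ descends to $\Vbb_Y$, the connected algebraic monodromy groups coincide after passing to a finite \'etale cover (Corollary~\ref{cor-R-nonsimplicity}). Applying Proposition~\ref{R-nonsimplicity} then yields that $\Hbf_W\otimes_{\Qbb}\Rbb$ is not $\Rbb$-simple. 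A closer inspection of the three cases $\ell=1, 2, \geq 3$ in the proof of that proposition shows that both factors $Y_1,Y_2$ support a non-trivial piece of the VHS (thanks to generic finiteness of the period map of $\Vbb_Y$), and each contributes at least one non-compact simple $\Rbb$-factor of $\Hbf_W$. Hence $\Hbf_W\otimes_{\Qbb}\Rbb$ carries at least two non-compact simple $\Rbb$-factors.

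Next I would apply Lemma~\ref{lemma variant of 7.8} to the $\Qbb$-Hodge subdatum $(\Hbf_W,\Dcal_{\Hbf_W})\hookrightarrow(\Gbf,\Dcal)$, using that $\Gbf$ is $\Qbb$-simple by the absolute simplicity assumption. If the equality $(\Lie\Hbf_W)^{-2}=(\Lie\Gbf)^{-2}$ were to hold, the lemma would force $\Hbf_W^{\der}$ to be $\Qbb$-simple with \emph{exactly one} non-compact $\Rbb$-factor, contradicting the previous paragraph. Therefore $(\Lie\Hbf_W)^{-2}\subsetneq(\Lie\Gbf)^{-2}$, and in particular $\dim\gfrak_W^{-2}<\dim\gfrak^{-2}$.

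Finally I would translate this into monodromic atypicality via a dimension count. Writing
\[
\moncd(S,\Vbb)-\moncd(W,\Vbb)=\bigl(\dim\Dcal-\dim\Dcal_{\Hbf_W}\bigr)-\bigl(\dim\Phi(S)-\dim\Phi(W)\bigr),
\]
the first bracket decomposes as $(\dim\gfrak^{-1}-\dim\gfrak_W^{-1})+(\dim\gfrak^{-2}-\dim\gfrak_W^{-2})$. Griffiths transversality places the period image tangent spaces of $\Phi(S)$ and $\Phi(W)$ inside $\gfrak^{-1}$ and $\gfrak_W^{-1}$ respectively, which (together with the monodromically generic hypothesis that $W$ is Hodge generic in its weakly special closure) yields $\dim\Phi(S)-\dim\Phi(W)\leq\dim\gfrak^{-1}-\dim\gfrak_W^{-1}$. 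The strict drop $\dim\gfrak_W^{-2}<\dim\gfrak^{-2}$ then gives $\moncd(W,\Vbb)<\moncd(S,\Vbb)$, so $W$ is monodromically atypical. The main obstacle is in the first paragraph: sharpening Proposition~\ref{R-nonsimplicity} to ``at least two non-compact simple $\Rbb$-factors'' requires extracting from each case in its proof that the monodromy on each $Y_i$ produces a non-compact $\Rbb$-factor; the secondary technical point is the upper bound $\dim\Phi(S)-\dim\Phi(W)\leq\dim\gfrak^{-1}-\dim\gfrak_W^{-1}$, which should follow from horizontal fillingness of the period map but must be verified carefully.
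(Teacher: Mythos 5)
Your first two steps are essentially the same ingredients the paper uses, just run in the reverse order: the paper assumes $W$ monodromically typical, invokes the observation from the proof of \cite[Proposition 7.5]{BKU} that typicality in level two forces $(\Lie\Hbf_W)^{\pm 2}=(\Lie\Gbf)^{\pm 2}$, then applies Lemma~\ref{lemma variant of 7.8} to conclude that $\Hbf_W\otimes_\Qbb\Rbb$ has a single non-compact simple factor plus a compact part, and finally kills the compact part by passing to a finite \'etale cover of $Y$ so as to contradict Corollary~\ref{cor-R-nonsimplicity}. You go the contrapositive way: derive $\geq 2$ non-compact $\Rbb$-factors from Proposition~\ref{R-nonsimplicity}, deduce $(\Lie\Hbf_W)^{-2}\subsetneq(\Lie\Gbf)^{-2}$ via the contrapositive of Lemma~\ref{lemma variant of 7.8}, and then try to convert the strict drop in $\gfrak^{-2}$ into monodromic atypicality by a dimension count. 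That final conversion is the gap.

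The inequality
\[
\dim\Phi(S)-\dim\Phi(W)\ \leq\ \dim\gfrak^{-1}-\dim\gfrak_W^{-1}
\]
does not follow from Griffiths transversality. Transversality gives the one-sided bounds $\dim\Phi(S)\leq\dim\gfrak^{-1}$ and $\dim\Phi(W)\leq\dim\gfrak_W^{-1}$, which combine to $\dim\Phi(S)-\dim\Phi(W)\leq\dim\gfrak^{-1}-\dim\Phi(W)$; to reach your claimed bound you would need $\dim\Phi(W)\geq\dim\gfrak_W^{-1}$, i.e.\ that $\Phi|_W$ fills \emph{all} horizontal directions of $\Dcal_{\Hbf_W}$, which is not among the hypotheses (the period map is merely assumed generically finite). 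Without this, a deficiency $\dim\gfrak_W^{-1}-\dim\Phi(W)$ could in principle absorb the gain $\dim\gfrak^{-2}-\dim\gfrak_W^{-2}$ in the formula $\moncd(S,\Vbb)-\moncd(W,\Vbb)=(\dim\gfrak^{-2}-\dim\gfrak_W^{-2})+(\dim\gfrak^{-1}-\dim\Phi(S))-(\dim\gfrak_W^{-1}-\dim\Phi(W))$, and nothing you have established rules this out. What you are attempting to reprove here is, in effect, the level-two observation from \cite[Proposition 7.5]{BKU}; the paper simply cites it and argues by contradiction, which bypasses the dimension count entirely. Your proof would be correct if you either cited that observation directly (in its contrapositive form: if $(\Lie\Hbf_W)^{-2}\subsetneq(\Lie\Gbf)^{-2}$ then $W$ is monodromically atypical) or supplied the structural Lie-algebra argument that actually justifies the bound.

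A secondary caution on the first step: for the case $\ell\geq 3$ in Proposition~\ref{R-nonsimplicity}, the stated conclusion is only that the monodromy group is not $\Rbb$-simple, not that it has two non-compact $\Rbb$-factors; the upgrade to ``each of $Y_1,Y_2$ contributes a non-compact factor'' (which the paper also uses in its alternative formulation) does hold, but it needs the extra remark that the tensor-factor local systems on each $Y_i$ are non-constant by generic finiteness and therefore yield non-compact monodromy, and that the compact factors have been eliminated by passing to a finite \'etale cover. You gesture at this but should spell it out, since the contradiction with Lemma~\ref{lemma variant of 7.8} rests precisely on the count of non-compact factors.
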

\begin{proof}
  We argue by contradiction. Suppose that $W$ is monodromically typical.  From \cite[Proposition 7.2]{BKU}, we know that the algebraic monodromy group $\Hbf'$ of $\mathbb V|_W$ is $\mathbb Q$-simple. Consider the monodromy representation associated with $\varphi^*_S\mathbb{V}$:
  \[
 \rho_S:\, \pi_1(Y_S) \to \Hbf'(\mathbb{R}) .
\]
By the assumption on $\varphi^*_S\mathbb{V}$, $\rho_S$ factors through the following
\[
 \rho:\, \pi_1(Y) \to \Hbf'(\mathbb{R}) ,
\]
which is the monodromy representation associated with $\mathbb{V}_Y$. This is a contradiction by Corollary~\ref{cor-Q-nonsimplicity}, which states $\Hbf'$ is not $\mathbb Q$-simple.
\end{proof}

With the preceding discussion on the level two case and the application of the Geometric Zilber-Pink Theorem from \cite{BKU}, we are now prepared to prove Theorem~\ref{thm-generic-rig}.

\begin{proof}[Proof of Theorem~\ref{thm-generic-rig}]
  Note that we are in the same context as Theorem~\ref{thm-very-gen-rig}, with further requirement on the Mumford-Tate domain. Thus we use the notations from the \ref{Proof} of Theorem~\ref{thm-very-gen-rig} freely, especially the commutative diagram \eqref{diag_SHY}.

  Let $(\Gbf,\Dcal)$ be the Hodge datum associated with the period maps appeared in \eqref{diag_SHY}. The proof is divided into three parts based on the level of the Hodge-Lie algebra of $\Gbf$.

  \paragraph{\textbf{Level 1 case}:} The Hodge datum $(\Gbf,\Dcal)$ is of Shimura type in this case. In particular, $\Dcal$ is a bounded symmetric domain. The rank $\geq 2$ cases have already been ruled out by the assumption of Theorem~\ref{thm-generic-rig}. Thus we will focus on the rank one case.

  If the rank of $\Dcal$ is one, then by the classification of Hermitian symmetric spaces (cf. \cite[Ch. X]{Helg}) we know that $\Dcal$ is a complex ball. Consider a morphism $Y \to \mathcal{M}_h$ such that the image of $Y$ is not contained in $Z$. Then the induced period map
  \[
  \Phi:\, Y \to \Gamma \backslash \mathcal{D}
\]
is generically finite onto its image. Without loss of generality, we assume that $Y$ is a smooth quasi-projective curve.
  By the extension theorem of Borel in \cite{Borel}, $\Phi$ can be extended to a morphism
  \[
  \bar{\Phi}:\, \bar{Y} \to X:= \overline{\Gamma \backslash \mathcal{D}},
  \]
  where $\bar{Y}$ is the projective completion of $Y$ and $X$ is some smooth toroidal compactification of $\Gamma \backslash \mathcal{D}$. By virtue of Lemma~\ref{fet-cover}, we are free to replace the monodromy group $\Gamma$ by some finite index subgroup such that the monodromies of the VHS along the boundary $\bar{Y} \setminus Y$ are {\it uniponent}. Therefore, the hermitian metric $h$ on $\bar{\Phi}^*\Omega^1_X(\log D_{\infty})$ ($D_{\infty}$ is the boundary divisor of the compactification) induced from the left invariant metric on $\Gamma \backslash \mathcal{D}$ has only logarithmic singularities at $\bar{Y} \setminus Y$. Since $\mathcal{D}$ is a complex ball and $\Phi$ is nonconstant, we know that $(\bar{\Phi}^*\Omega^1_X(\log D_{\infty}),h)$ is Griffiths positive on $Y$. Therefore, by simple curvature estimate, we know that each quotient bundle of $\bar{\Phi}^*\Omega^1_X(\log D_{\infty})$ must have positive degree on $\bar{Y}$.

  On the other hand, if the morphism $Y \to \mathcal{M}_h$ is non-rigid, then by deformation theory there exists a nonzero section in $H^0(\bar{Y}, \bar{\Phi}^*T_X(-\log D_{\infty}))$. Taking dual of this section will produce a quotient
  \[
  \bar{\Phi}^*\Omega^1_X(\log D_{\infty}) \twoheadrightarrow \mathcal{O}_{\bar{Y}},
  \]
  which is absurd by the above discussion. Therefore, $\mathrm{NRL}(\mathcal{M}_h,Z) = \varnothing$.

  \paragraph{\textbf{Level 2 case}:} Consider a non-rigid morphism
  \[
  \varphi:\, Y:=Y_1 \times Y_2 \to \mathcal{M}_h
  \]
  which appears in $\mathrm{NRL}(\mathcal{M}_h,Z)$. By the same construction in the \ref{Proof} of Theorem~\ref{thm-very-gen-rig}, one obtains the diagram \eqref{diag_SHY}. Since we are in the level two case, by applying Theorem \ref{level-two-atyp} to the morphism $\varphi_S:\,Y_S \to S$, one concludes that $\varphi_S(Y_S)$ is contained in a {\it monodromically atypical weakly special} subvariety of $S$.

  By a variant of the Geometric Zilber-Pink Theorem \ref{GZP}, we know that the Zariski closure of $\mathrm{HL}(S,\mathbb{V})_{\mathrm{ws},w-atyp}$ is either a finite union of closed irreducible algebraic subvarieties of $S$, or the adjoint Mumford-Tate group $\Gbf^{\mathrm{ad}}$ decomposes as a non-trivial product. Since $\Gbf^{\mathrm{der}}$ is absolutely simple, we know that the second possibility will never happen.

  Using the same argument as in the \ref{Proof} of Theorem~\ref{thm-very-gen-rig}, we know that $\delta(\mathrm{HL}(S,\mathbb{V})_{\mathrm{ws},w-atyp})$ is {\it saturated} in $H$, and its Zariski closure is also saturated. Therefore, $p(\delta(\mathrm{HL}(S,\mathbb{V})_{\mathrm{ws},w-atyp}))$ is contained in a finite union of closed irreducible substacks of $\mathcal{M}_h$. Since each $\varphi_S(Y_S)$ is contained in $\mathrm{HL}(S,\mathbb{V})_{\mathrm{ws},w-atyp}$, we know that $\varphi(Y)$ is contained in $p(\delta(\mathrm{HL}(S,\mathbb{V})_{\mathrm{ws},w-atyp}))$. Consequently,
  \[
  \mathrm{NRL}(\mathcal{M}_h,Z) \subset p(\delta(\mathrm{HL}(S,\mathbb{V})_{\mathrm{ws},w-atyp})),
  \]
  which confirms the generic rigidity property of $\mathcal{M}_h$.

  \paragraph{\textbf{Level $\geq 3$ case}:} Denote by $\mathrm{HL}(S,\mathbb{V})_{\mathrm{pos}}$ the Hodge locus of positive period dimension. Since the level is at least three, one has
  \[
  \mathrm{HL}(S,\mathbb{V})_{\mathrm{pos}} = \mathrm{HL}(S,\mathbb{V})_{\mathrm{ws},w-atyp}
  \]
  by \cite[Theorem 7.1]{BKU}. That implies
  \[
  \varphi_S(Y_S) \subset \mathrm{HL}(S,\mathbb{V})_{\mathrm{ws},w-atyp}
  \]
 for each $\varphi_S$ induced from a non-rigid morphism $\varphi:\,Y=Y_1 \times Y_2 \to \mathcal{M}_h$. The rest of the proof is the same as the level two case.
\end{proof}

\subsection{Bombieri-Lang type finiteness results}
The purpose of studying the rigidity property on $\mathcal{M}_g$ is to derive the Shafarevich's finiteness conjecture. Similarly, we want to use Theorem~\ref{thm-generic-rig} to derive a geometric Bombieri-Lang type finiteness result.

\begin{corollary}[Geometric Bombieri-Lang]\label{geo-Bombieri-Lang}
  Let the moduli space $\mathcal{M}_h$ be the same as in Theorem~\ref{thm-generic-rig}. Assume further that
  \begin{itemize}
    \item[(a)] Either $\mathcal{M}_h$ is a {\rm fine} moduli space, i.e. the universal family can be descended to the coarse moduli scheme $M_h$;
    \item[(b)] Or $\mathcal{M}_h$ is a {\rm Deligne-Mumford} stack.
  \end{itemize}
  Then there exists a Zariski open subset $M^o_h \subset M_h$ such that for any quasi-projective curve $U$, the following set
  \[
  \{\varphi:\, U \to \mathcal{M}_h\,|\, \text{$\varphi$ nonconstant and } \iota(\varphi(U)) \cap M^o_h \neq \varnothing \,\}
  \]
is a finite set.
\end{corollary}


In fact, $M^o_h= M_h \setminus \overline{\mathrm{NRL}(\mathcal{M}_h)}^{\mathrm{Zar}}$. The proof follows a similar strategy as Parshin-Arakelov's, i.e. {\it Boundedness} plus {\it Generic Rigidity}, as mentioned in Section~\ref{sec nrl}.

\begin{proof}[Proof of Corollary~\ref{geo-Bombieri-Lang}]
  Consider the following set
  \[
  \Sigma:=\{\varphi:\, U \to \mathcal{M}_h\,|\, \text{$\varphi$ nonconstant and } \iota(\varphi(U)) \cap M^o_h \neq \varnothing \,\},
  \]
  where $M^o_h= M_h \setminus \overline{\mathrm{NRL}(\mathcal{M}_h)}^{\mathrm{Zar}}$. Note that Theorem~\ref{thm-generic-rig} guarantees that $M^o_h$ is a Zariski open subset of $M_h$. The finiteness of $\Sigma$ is a direct consequence of the following two statements.\\

  \paragraph{\textbf{Boundedness}} We will show that there are finitely many deformation equivalence classes of $\varphi$ in $\Sigma$.

  If $\mathcal{M}_h$ is a fine moduli space, then one can find a projective compactification $\bar{M}_h \supset M_h$ with normal crossing boundary $D_{\infty}$. Then $\Sigma$ can be regarded as a subset of the log Hom scheme $\mathrm{Hom}\left( (\bar{U},D_U), (\bar{M}_h,D_{\infty})  \right)$, where $D_U:= \bar{U}\setminus U$. By the Arakelov type inequality (cf. \cite[Theorem 0.3]{VZ-01}), we know that $\mathrm{Hom}\left( (\bar{U},D_U), (\bar{M}_h,D_{\infty})  \right)$ is of finite type.


  In general, $\mathcal{M}_h$ is a Deligne-Mumford stack: It is an Artin stack by the construction $\mathcal{M}_h=[H/\mathrm{PGL}_r]$. The Matsusaka-Mumford theorem guarantees that $\mathcal{M}_h$ is separated. The polarization on the objects parametrized by $\mathcal{M}_h$ implies that $\mathcal{M}_h$ has an affine diagonal (cf. for instance {\cite[Lemma~2.2]{Jav-Lou}}). Moreover, the geometric points of $\mathcal{M}_h$ are finite and reduced, which follows from {\cite[Lemma~7.6]{Vieh95}} and the fact that objects parametrized by $\mathcal{M}_h$ are defined over $\mathbb{C}$. This confirms that $\mathcal{M}_h$ is a Deligne-Mumford stack.

  Since $\mathcal{M}_h$ is a Deligne-Mumford stack, we know that it is {\it compactifiable} in the sense of \cite{KL11} as $\mathcal{M}_h=[H/\mathrm{PGL}_r]$ (cf. {\cite[Lemma~4.2]{KL11}}). To show that $\mathcal{M}_h$ is {\it weakly bounded} in the sense of \cite{KL11}, one needs to find a line bundle $\mathcal{L}$ on some coarse compactification of $\mathcal{M}_h$ such that $\mathcal{L}$ is ample over $M_h$, and verify the Arakelov type inequality for $\mathcal{L}$.

  In our setting, since the infinitesimal Torelli theorem holds on $\mathcal{M}_h$, it is natural to consider the {\it Griffiths line bundle} on $\mathcal{M}_h$, that is, $\bigotimes_p \mathrm{det}\,\mathcal{F}^p$, where $\{\mathcal{F}^p\}$ is the Hodge filtration of the polarized $\mathbb{Z}$-VHS associated with the universal family. Alternatively, the Griffiths line bundle can be written as $\bigotimes_p \left(\mathrm{det}\,\mathcal{E}^{p,q}\right)^{\otimes p}$, where $\mathcal{E}^{p,q}:= \mathcal{F}^p/\mathcal{F}^{p+1}$ is the $p$-th grading piece of the Hodge filtration. By virtue of {\cite[Lemma~2]{KV-04}}, the Griffiths line bundle can be descended to a $\mathbb{Q}$-line bundle $\mathcal{L}_{M_h}$ on the coarse moduli space $M_h$.

  Let $Y$ be the image of the quasi-finite period map $\Phi:\,\mathcal{M}_h \to \Gamma \backslash \mathcal{D}$. By {\cite[Theorem~6.14]{BBT-23}} we know that $Y$ is a quasi-projective scheme. Moreover, the argument in the proof of {\cite[Corollary~7.3]{BBT-23}} implies that the period map $\mathcal{M}_h \to Y \subset \Gamma \backslash \mathcal{D}$ factors through a quasi-finite morphism $\Psi:\,M_h \to Y$, and the Griffiths $\mathbb{Q}$-line bundle $\mathcal{L}_{M_h}$ is the pull-back of the Griffiths $\mathbb{Q}$-line bundle $\mathcal{L}_Y$ on the period image $Y$. It is known from {\cite[Theorem~6.14]{BBT-23}} that $\mathcal{L}_Y$ is ample on $Y$. Therefore, $\mathcal{L}_{M_h}=\Psi^*\mathcal{L}_Y$ is {\it ample} on $M_h$.

  Arakelov inequalities for Hodge bundles have been extensively studied (see, for example, \cite{Pet00, JZ02, VZ03-asianJ}). An Arakelov type inequality for the Griffiths line bundle can be established as follows: In {\cite[Proposition~2.1]{VZ03-asianJ}}, it is shown that the degree of $\mathrm{det}\,\mathcal{E}^{p,q}$ is bounded above by a uniform constant multiplied by the degree of the logarithmic canonical bundle of the base curve, provided that the VHS has unipotent monodromies. By summing these inequalities with appropriate weights, one obtains the required Arakelov inequality for the Griffiths line bundle in the unipotent case. The Arakelov inequality for the Griffiths line bundle in the general case has also been established in {\cite[Theorem~1.6]{Bru20}}. This completes the proof of the weak boundedness of $\mathcal{M}_h$, and the boundedness follows from \cite[Theorem 1.7]{KL11}.\\

  \paragraph{\textbf{Rigidity}} There is exactly one element in each deformation equivalence class in $\Sigma$. But this follows directly from the definition of $\Sigma$.
\end{proof}

\begin{remark}
Note that the boundedness result holds even without assuming the infinitesimal Torelli theorem. For example, the boundedness of families of canonically polarized manifolds can be established using the Arakelov inequality for direct image sheaves $\mathrm{det}\,f_*\omega^{\otimes m}_{\mathcal{X}/\mathcal{M}_h}$, often referred to as the {\it Viehweg line bundles}. For families of polarized manifolds with a semiample canonical line bundle, boundedness can still be achieved by applying the Arakelov inequality to certain direct image sheaves. However, since the primary focus of this paper is on \textbf{rigidity properties}, and the infinitesimal Torelli theorem is crucial to our approach in studying rigidity, we have opted for the most straightforward method to obtain the boundedness result within the framework of Corollary~\ref{geo-Bombieri-Lang}.
\end{remark}

\subsection{Examples and applications }
\subsubsection{Generic rigidity property of $\overline{\mathcal M_g}$}
Let $\overline{\mathcal M_g}$ be the partial compactification of $\mathcal M_g$. More precisely, the moduli space of $g$ genus stable curves of compact type (cf. \cite{GeoCurve}). The usual Torelli map $j^0:\mathcal M_g\ra\mathcal A_g$ extends to a morphism $j:\overline{\mathcal M_g}\ra\mathcal A_g$ (cf. \cite{M-O} p. 554 and \cite{CLZ}).

Recall that Shafarevich conjecture states that $\mathrm{NRL}(\mathcal M_g)=\varnothing$. It is natural to consider the distribution of the non-rigid locus $\mathrm{NRL}(\overline{\mathcal M_g})$. Here is our result:

\begin{proposition}

    $\overline{\mathcal M_g}$ has the generic rigidity property for $g\geq 10$, which means $\mathrm{NRL}(\overline{\mathcal M_g})$ is not Zariski dense.

\end{proposition}
\begin{proof}
    Let $j$ be the Torelli map of $\overline{\mathcal M_g}$ into $\mathcal A_g$ as above, which is known to be generically finite; at least injective on $\mathcal M_g$.  By Theorem~\ref{thm-very-gen-rig}, any non-rigid locus is contained in some non-rigid special subvariety of $\overline{\mathcal M_g}$, denoted by $j^{-1}(\Gamma_1\backslash \Dcal_1\times \Gamma_2\backslash \Dcal_2)$.

    Assume dim$(\Dcal_1)\leq$ dim$(\Dcal_2)$. Hence dim$(\Dcal_1)\leq$ $\frac{1}{2}$dim$(\mathcal A_g)$. Then we know any weakly special subvariety of the form $j^{-1}(\Gamma_1\backslash \Dcal_1\times \{p\})$ is monodromically atypical, where $p$ is a Hodge generic point in $\Gamma_2\backslash \Dcal_2$. In fact,
     \begin{align*}&\text{codim}_{\mathcal A_g}(\overline{\mathcal M_g})+\text{codim}_{\mathcal A_g}(\Gamma_1\backslash \Dcal_1)-\text{codim}_{\mathcal A_g}(j^{-1}(\Gamma_1\backslash \Dcal_1\times \{p\}))\\
    \geq &\text{dim}(\mathcal A_g)-\text{dim}(\mathcal M_g)-\text{dim}(\Dcal_1)\geq \frac{1}{2}\text{dim}(\mathcal A_g)-\text{dim}(\mathcal M_g)>0.\end{align*}

    Suppose all nonrigid loci are Zariski dense in $\overline{\mathcal M_g}$. Then we have a dense set of monodromically atypical weakly special subvarieties in $\overline{\mathcal M_g}$.

    Since the generic Mumford-Tate group of $\overline{\mathcal M_g}$ is $\mathbb Q$-simple, by Theorem \ref{GZP}, the set of monodromically atypical weakly special subvarieties cannot be Zariski dense in $\overline{\mathcal M_g}$. Thus $\mathrm{NRL}(\overline{\mathcal M_g})$ is not Zariski dense.
\end{proof}

\subsubsection{Generic rigidity property of $\mathcal M_{d,n}$}\label{finiteness_hypersur}

Let $\mathcal M_{d,n}$ be the moduli space of smooth degree $d$ hypersurfaces in $\mathbb P^{n+1}$. We shall explain that our main theorem applies here for almost all $d$ and $n$. Firstly, the universal family $f_{d,n}:\,\mathcal{X} \to \mathcal{M}_{d,n}$ carrying a polarized $\mathbb{Z}$-VHS satisfying the associated period map
        $ \varphi_{d,n}:\,\mathcal{M}_{d,n} \to \Gamma \backslash \mathcal{D}$ is locally injective except $n=2,d=3$ (cf. \cite{Griff68} and \cite{Green}). Next, let $\mathbf{MT}_{d,n}$ be the generic Mumford-Tate group of $\varphi_{d,n}$. Let $\Gbf_{d,n}$ be the automorphism group of $H^n(X,\mathbb Q)$ preserving the cup product for a generic degree $d$ hypersurface $X$ in $\mathbb P^{n+1}$. Thus $\Gbf_{d,n}$ is symplectic or orthogonal depending on the parity of $n$ and it is absolutely simple. Let $\Hbf_{d,n}$ be the algebraic monodromy group of $\varphi_{d,n}$. Clearly, $\Hbf_{d,n}\subset \mathbf{MT}_{d,n}^{\ad}\subset \Gbf_{d,n}$. In,fact, Deligne (\cite[Proposition 5.3 and Theorem 5.4]{Del}) and Beauville (\cite[Theorem 2 and Theorem 4]{Beauville}) proved that $\Hbf_{d,n}=\Gbf_{d,n}$. Thus $\mathbf{MT}^{\der}_{d,n}$ is absolutely simple. For $n=2$ and $d\geq 5$, one sees the level of $\varphi_{d,n}$ is $2$. For $n\geq3$ and $d\geq 6$, the level of $\varphi_{d,n}$ is at least $3$. Thus Theorem~\ref{thm-generic-rig} applies to $\mathcal M_{d,n}$:
\begin{proposition}
    For $n=2$, $d\geq 5$, or for $n\geq3$, $d\geq 6$, the moduli space $\mathcal M_{d,n}$ has the generic rigidity property.
\end{proposition}

\section{Zariski closures of non-rigid loci}\label{sec-4}
This section addresses Question (B) from the introduction, aiming to understand the structure of the Zariski closure of the non-rigid locus $\mathrm{NRL}(\mathcal{M}_h)$ in the moduli space $M_h$. We begin by introducing the notion of maximal non-rigid families (Definition~\ref{max_nr}) and develop the bi-Hom construction (Section~\ref{Bi-Hom}) as a systematic method to produce such families. This construction relies on the existence of suitable projective compactifications; we therefore review the Baily-Borel compactification for Shimura varieties and its generalization to period images via the recent work of Bakker-Filipazzi-Mauri-Tsimerman (Theorem~\ref{BB_comp}), which provides the necessary extension theorems and functoriality properties. Using these tools, we construct the moduli space of morphisms into $M$ and prove that the bi-evaluation morphism yields a maximal non-rigid family (Proposition~\ref{produce-max-non-rig}).

Parallel to this geometric construction, we study non-rigid Hodge subvarieties (Definition~\ref{definition special subvariety of rigid non-rigid type}) and characterize them as factors in a product of Hodge data (Lemma~\ref{lemma special subvarieties of factor type}). This group-theoretic perspective leads to the interpretation of maximal product subvarieties $Y_1\times Y_2\subset M$ as structurally-atypical intersections of $M$ with product period subdomains (Section~\ref{subsec-satyp}). Inspired by the Zilber-Pink philosophy, we formulate a finiteness conjecture for such intersections (Conjecture~\ref{finiteness_atyp}) and, assuming it, deduce a structural description of $\overline{\mathrm{NRL}(\mathcal{M}_h)}^{\mathrm{Zar}}$ as a finite union of two types of components: those dominated by fiber spaces whose generic fiber is a product variety, and those birational to Shimura varieties of rank $\ge 2$ (Conjecture~\ref{Zar_closure}). 

The remainder of the section is devoted to providing {\it unconditional} evidence for this conjectural picture. We prove that under the hypothesis that each bi-Hom scheme is special, an irreducible component $Z$ of $\overline{\mathrm{NRL}(\mathcal{M}_h)}^{\mathrm{Zar}}$ must satisfy a dichotomy: either its algebraic monodromy group is $\mathbb Q$-simple, forcing $Z$ to be birational to a Shimura variety of Hodge level $1$, or it admits a non-trivial fibration whose fibres contain all special non-rigid loci (Theorem~\ref{thm:snrl}). This result, which does not rely on the finiteness conjecture, already reveals the structure imposed by Hodge theory on the Zariski closure of the non-rigid locus and motivates the investigation on Calabi-Yau moduli spaces (cf. Section~\ref{sec-5}).

\subsection{Maximal non-rigid families}
We have particular interests in the ``maximal elements'' in $\mathrm{NRL}(\mathcal{M}_h)$:
\begin{definition}\label{max_nr}
  A non-rigid family $V \to U$ in $\mathcal{M}_h(U)$ is said to be {\it maximal}, if for its classifying morphism $\psi:\, U \to \mathcal{M}_h$, there exists a scheme $T$ and an extension of $\psi$,
  \[
  \Psi:\, U \times T \to \mathcal{M}_h,
  \]
  which is generically finite onto its image, such that the following property holds: for any factorization
  \[
    \xymatrix{
   U \times T \ar[r]^-{\Psi} \ar[d]^{(\iota_U,\iota_T)} & \mathcal{M}_h. \\
   U' \times T'  \ar@/_1pc/[ru]_-{\Psi'} &
    }
  \]
preserving the product structures (i.e. $\iota_U:\,U \to U'$ and $\iota_T:\,T \to T'$), one has that $\iota_U:\,U \to U'$ and $\iota_T:\,T \to T'$ are {\it birational morphisms}.
\end{definition}

For simplicity, we assume that $\mathcal{M}_h$ is a {\it fine moduli space} hereafter.

\subsection{Bi-Hom construction}\label{Bi-Hom}
The natural way of constructing maximal non-rigid families in $\mathcal{M}_h$ is to apply the Hom functor to classifying morphisms twice, and the maximality follows from the universal property of Hom functor.

However, most moduli spaces are {\it non-complete}. One has to compactify $M_h$ and deal with the boundary so as to employ Hom scheme. Consequently, it becomes important to choose a suitable projective compactification of $M_h$.

\subsubsection{Baily-Borel compactification}

\paragraph{Shimurian case}
When the moduli space is a Shimura variety, there is a canonical compactification. Let $\Omega:= \mathbf{G}(\mathbb{R})/K$ be a bounded symmetric domain, and let $\Gamma \subset \mathbf{G}(\mathbb{R})$ be an arithmetic subgroup. Then the quotient space, $X_{\Gamma}:= \Gamma \backslash \Omega$, admits a structure of quasi-projective variety.

By the work of Satake, Baily and Borel, the Shimura variety $X_{\Gamma}$ has a canonical projective compactification ${X_{\Gamma}}^{\mathrm{BB}}$, the {\it Baily-Borel compactification}. When $X_{\Gamma}=\mathcal{A}_g$, there is an explicit set-theoretical description of ${\mathcal{A}_g}^{\mathrm{BB}}$:
\[
{\mathcal{A}_g}^{\mathrm{BB}}= \mathcal{A}_g \sqcup \mathcal{A}_{g-1} \sqcup \cdots \sqcup \mathcal{A}_1 \sqcup \mathcal{A}_0,
\]
where $\mathcal{A}_1$ is the modular curve and $\mathcal{A}_0=\{\infty\}$.

The Baily-Borel compactification ${X_{\Gamma}}^{\mathrm{BB}}$ enjoys various nice properties. For instance, ${X_{\Gamma}}^{\mathrm{BB}}$ is the $\mathrm{Proj}$ of the graded ring of automorphic forms associated to $\Omega$ and $\Gamma$.
The most important property related to our construction is the following {\it extension property}, established by Borel in \cite{Borel72}.

\begin{theorem}[Borel Extension Theorem]
Let $X_{\Gamma}$ be a Shimura variety as above. Suppose that $\Gamma$ is torsion-free.
Then for every log smooth pair $(Z,D_Z)$, every holomorphic map $\varphi:\,Z^o:= Z \setminus D_Z \to X_{\Gamma}$ can be extended to a holomorphic map
\[
\bar{\varphi}:\, Z \longrightarrow {X_{\Gamma}}^{\mathrm{BB}}
\]
mapping into the Baily-Borel compactification.
\end{theorem}

\begin{remark}
Borel actually proved the above theorem in a more general setting by allowing $Z^o$ to be a punctured polydiscs $(\mathbb{D}^*)^a \times \mathbb{D}^b$. See \cite{DLSZ} for a generalization of Borel's extension theorem in the context of moduli spaces of polarized varieties.
\end{remark}

\paragraph{Griffiths Conjecture}
Most moduli spaces are not Shimurian. On the other hand, many moduli spaces have period maps mapping into (quotients of) period domains, i.e. homogeneous complex manifolds classifying polarized Hodge structures with given type.

Let $U$ be a quasi-projective variety. Let $\mathbb{V}=(V_{\mathbb{Z}},V_{\mathcal{O}},\nabla, \mathrm{F}^{\cdot}V_{\mathcal{O}},Q)$ be a $\mathbb{Z}$-PVHS over $U$. Here $V_{\mathbb{Z}}$ is the underlying $\mathbb{Z}$-local system, $(V_{\mathcal{O}},\nabla)$ is the corresponding flat bundle, $\mathrm{F}^{\cdot}V_{\mathcal{O}}$ is the Hodge filtration and $Q$ is the polarization.
These datum give a period map
\[
\varphi:\, U \to \Gamma \backslash \mathcal{D},
\]
where $\mathcal{D}$ is the corresponding period domain and $\Gamma$ is some arithmetic subgroup containing the monodromy group of $V_{\mathbb{Z}}$.

By a theorem of Griffiths, one could enlarge $U$ by some quasi-projective variety $U' \supset U$, and extend the period map as a {\it proper} holomorphic map $\varphi'$ from $U'$ mapping into $\Gamma \backslash \mathcal{D}$. Moreover, the image of $\varphi'$ is a closed complex analytic subvariety of $\Gamma \backslash \mathcal{D}$.

\begin{definition}[Period image]
  We call this closed complex analytic subvariety $\mathlcal{P}:=\mathrm{Im}(\varphi')$ the {\it period image} of $\varphi$. In particular, one has the following factorization
  \[
    \xymatrix{
 U \ar[rd]_{\varphi} \ar[rr]^{\varphi} & & \Gamma \backslash \mathcal{D} \\
   & \mathlcal{P}  \ar@{^{(}->}[ru]
    }
  \]
of the period map $\varphi$.
\end{definition}

Inspired by the Baily-Borel compactification of Shimura varieties, Griffiths in 1970s made the following influential conjecture:
\begin{conj*}[Griffiths]
  \begin{itemize}
    \item[(1)] The period image $\mathlcal{P}$ is a quasi-projective variety, and the period map $\varphi$ is induced from an algebraic morphism.
    \item[(2)] The Griffiths line bundle $L:= \bigotimes_p \mathrm{det}(\mathrm{F}^pV_{\mathcal{O}})$ is ample on $\mathlcal{P}$.
    \item[(3)] There exists a Baily-Borel type compactification $\mathlcal{P}^{\mathrm{BB}}$ of $\mathlcal{P}$, such that $\mathlcal{P}^{\mathrm{BB}}$ is the $\mathrm{Proj}$ of the ring of moderate growth sections of $L^{\otimes n}$.
  \end{itemize}
\end{conj*}
Note that (1) and (2) of the Griffiths conjecture has been proved in \cite{BBT-23}. Very recently, the Baily-Borel type compactification conjectured in (3) was constructed by Bakker-Filipazzi-Mauri-Tsimerman in \cite{BFMT25}.

\begin{theorem*}[Bakker-Filipazzi-Mauri-Tsimerman]\label{BB_comp}
  Let $\mathlcal{P}$ be a period image. Then there exists a functorial projective compactification $\mathlcal{P}^{\mathrm{BB}}$ such that the following properties hold:
  \begin{itemize}
    \item[(1)] For any log smooth algebraic space $(Z,D_Z)$, any morphism $Z \setminus D_Z \to \mathlcal{P}$ for which the induced period map is locally liftable extends to a morphism $Z \to \mathlcal{P}^{\mathrm{BB}}$.
    \item[(2)] $\mathlcal{P}^{\mathrm{BB}}$ is the $\mathrm{Proj}$ of the ring of moderate growth sections of the Griffiths bundle $L^{\otimes n}$, which is a finitely generated ring. Moreover, for any extension $Z \to \mathlcal{P}^{\mathrm{BB}}$, the pull-back of $L^{\otimes n}$ to $Z$ is exactly the Griffiths bundle on $Z$.
  \end{itemize}
\end{theorem*}

As a direct corollary of the above theorem, one could check the functoriality of the Baily-Borel compactification $\mathlcal{P}^{\mathrm{BB}}$:

\begin{corollary}[Functoriality]\label{funct_BB}
  \begin{itemize}
    \item[(1)] Let $\varphi_i:\,U_i \twoheadrightarrow \mathlcal{P}_i \subset \Gamma_i \backslash \mathcal{D}_i$ ($i=1,2$) be two period maps. Choose log smooth projective compactifications $(Y_i,D_i)$ of $U_i$, $i=1,2$. Denote by ${\mathlcal{P}_i}^{\mathrm{BB}}$ the Baily-Borel compactification of the corresponding period image. Then
          \[
        (\mathlcal{P}_1 \times \mathlcal{P}_2)^{\mathrm{BB}} = {\mathlcal{P}_1}^{\mathrm{BB}} \times {\mathlcal{P}_2}^{\mathrm{BB}}.
          \]
    \item[(2)] Let $\varphi:\,U \twoheadrightarrow \mathlcal{P} \subset \Gamma \backslash \mathcal{D}$ be a period map. Let $Z \subset U$ be a closed subvariety. Denote by $\varphi_Z:\,Z \twoheadrightarrow \mathlcal{P}_Z \subset \Gamma_Z \backslash \mathcal{D}_Z$ be the induced period map on $Z$, which fits into the following commutative diagram
          \[
          \xymatrix{
          Z \ar@{->>}[r]^{\varphi_Z} \ar@{^{(}->}[d]  &  \mathlcal{P}_Z \ar@{^{(}->}[d] \ar@{^{(}->}[r] & \Gamma_Z \backslash \mathcal{D}_Z \ar@{^{(}->}[d] \\
          U \ar@{->>}[r]^{\varphi} & \mathlcal{P} \ar@{^{(}->}[r] & \Gamma \backslash \mathcal{D}.
          }
          \]
          After replacing $U$ and $Z$ by some birational models $U'$ and $Z'$, one can find a log smooth projective compactification $(Y,D_Y)$ of $U'$ such that $(\overline{Z'},D_Z:= D_Y \cap \overline{Z'})$ is also log smooth (here $\overline{Z'}$ is the Zariski closure of $Z'$ in $Y$). Then ${\mathlcal{P}_Z}^{\mathrm{BB}} \subset {\mathlcal{P}}^{\mathrm{BB}}$. In particular, ${\mathlcal{P}_Z}^{\mathrm{BB}}$ is the Zariski closure of $\mathlcal{P}_Z$ in ${\mathlcal{P}}^{\mathrm{BB}}$.
  \end{itemize}
\end{corollary}

\paragraph{Baily-Borel compactification of moduli schemes}
Bakker-Filipazzi-Mauri-Tsimerman have constructed Baily-Borel type compactification of any moduli space of polarized varieties with a local Torelli theorem. They used the ring of moderate growth sections of the Griffiths bundle on moduli space to construct the projective compactification. Such strong condition will not be used in our setting, and thus we provide here an alternative construction of the ``minimal compactification'' such that the extended period map remains to be quasi-finite.

Consider the period map on the fine moduli scheme $M$ and its period image $\mathlcal{P}$. Let $\delta:\,U \to M$ be the desingularization and let $\overline{U} \supset U$ be a log smooth projective compactification. By Theorem~\ref{BB_comp}, we have the following diagram:
\[
  \xymatrix{
 U \ar[r]^{\delta} \ar@{^{(}->}[d] &   M \ar@{->>}[r]^{\varphi}  & \mathlcal{P} \ar@{^{(}->}[r] \ar@{^{(}->}[d] & \Gamma \backslash \mathcal{D} \\
    \overline{U} \ar[rr]^{{\varphi}_1} & &  {\mathlcal{P}}^{\mathrm{BB}},  &
    }
\]
where ${\varphi}_1$ is the extended period map.

Consider the Stein factorization of the proper morphism ${\varphi}_1$:
\[
\overline{U} \to \overline{U}' \xrightarrow{\bar{\varphi}} {\mathlcal{P}}^{\mathrm{BB}}.
\]
If the period map $\varphi$ is finite, then $\mathrm{Exc}(\varphi_1) \cap U$ must be contained in $\mathrm{Exc}(\delta)$. In particular, $M \setminus \Delta$ is contained in $\overline{U}'$, where $\Delta$ is the singular locus. This induces a birational map $M \dashrightarrow \overline{U}'$, and we want to show that it can be extended over $\Delta$. If it is not extendable, then there will be an exceptional divisor mapped into $\overline{U}'$. But this is impossible due to the finiteness of $\varphi$ and $\bar{\varphi}$, as well as the following commutative diagram
\[
  \xymatrix{
    M \ar@{->>}[r]^{\varphi} \ar@{-->}[d]  & \mathlcal{P} \ar@{^{(}->}[r] \ar@{^{(}->}[d] & \Gamma \backslash \mathcal{D} \\
    \overline{U}' \ar[r]^{\bar{\varphi}}  &  {\mathlcal{P}}^{\mathrm{BB}}.  &
    }
\]
Consequently, we know that $\overline{U}'$ is also a projective compactification of $M$. Note that the extended period map $\bar{\varphi}$ on $\overline{U}'$ is a {\it finite} morphism. By abuse of notions, we still call $\overline{U}'$ the {\it Baily-Borel compactification} of $M$, and we denote it as $M^{\mathrm{BB}}$.

Note that ${M}^{\mathrm{BB}}$ could be highly singular, and the boundary $D_{\infty}:= {M}^{\mathrm{BB}} \setminus M$ could be a closed subvariety of higher codimension.

\subsubsection{Extension theorem}
Let $M$ be a fine moduli space with a {\it quasi-finite} period map $\varphi:\, M \twoheadrightarrow \mathlcal{P} \subset \Gamma \backslash \mathcal{D}$. Let $M^{\mathrm{BB}}$ be the Baily-Borel compactification of $M$ constructed in the previous section. In particular, the extended period map $\bar{\varphi}:\,M^{\mathrm{BB}} \to \mathlcal{P}^{\mathrm{BB}}$ is also quasi-finite.

\begin{theorem}[Extension Theorem]\label{ext_thm}
  Let $U$ and $H$ be two smooth quasi-projective varieties. Let $\Phi:\,U \times H \to M$ be a morphism. Then for any smooth projective compactifications $\overline{U} \supset U$ and $\overline{H} \supset H$ with normal crossing boundaries, $\Phi$ can be extended to a morphism $\bar{\Phi}:\,\overline{U} \times \overline{H} \to {M}^{\mathrm{BB}}$.
\end{theorem}

\begin{proof}
  Let $\widehat{\overline{U} \times \overline{H}}$ be the resolution of indeterminacy of the rational map $\overline{U} \times \overline{H} \dashrightarrow M^{\mathrm{BB}}$ induced by $\Phi$. If the indeterminacy locus is nonempty, one can reduce the general case to the case that both $U$ and $H$ are curves. We will obtain a contradiction in this case. Denote by $\delta:\, \widehat{\overline{U} \times \overline{H}} \to \overline{U} \times \overline{H}$ the blowing up of indeterminacy points and let $\Psi:\, \widehat{\overline{U} \times \overline{H}} \to M^{\mathrm{BB}}$ be the morphism.

After some finite \'etale base change, one has the following factorization
\[
  \xymatrix{
U \times H \ar[r]^-{\Phi} \ar[d]_{(\varphi_U,\varphi_H)}  & M \ar[d]^{\varphi} \\
\Gamma_U \backslash \mathcal{D}_U \times \Gamma_H \backslash \mathcal{D}_H \ar@{^{(}->}[r] & \Gamma \backslash \mathcal{D},
  }
\]
where $\mathcal{D}_U$, $\mathcal{D}_H$ are the corresponding period subdomains.

Let $\mathlcal{P}_U$ (resp. $\mathlcal{P}_U$) be the period image of $\varphi_U$ (resp. $\varphi_H$). By virtue of Corollary~\ref{funct_BB}, we have the following commutative diagram
\small
\begin{align}
  \label{big-comm-extension}
    \xymatrix@C=4em{
U \times H \ar[ddd]_{(\varphi_U, \varphi_H)} \ar[rrrr]^{\Phi} \ar@{^{(}->}[rdd]  & & & & M \ar@{_{(}->}[ldd] \ar[ddd]^{\varphi} \\
   & & \widehat{\overline{U} \times \overline{H}} \ar[ld]_{\delta} \ar[rd]^{\Psi}  & &  \\
   & \overline{U} \times \overline{H}  \ar@{-->}[rr]^{\Phi} \ar[dd]_{(\overline{\varphi_U}, \overline{\varphi_H})} & & {M}^{\mathrm{BB}} \ar[dd]^{\bar{\varphi}} & \\
   \mathlcal{P}_U \times \mathlcal{P}_H \ar[rrrr] \ar@{^{(}->}[rd] & & & & \mathlcal{P} \ar@{_{(}->}[ld] \\
   & {\mathlcal{P}_U}^{\mathrm{BB}} \times {\mathlcal{P}_H}^{\mathrm{BB}} \ar@{^{(}->}[rr] & & {\mathlcal{P}}^{\mathrm{BB}}  &
    }
  \end{align}
\normalsize

Let $E$ be the exceptional divisor of $\delta$. If the rational map $\Phi:\,\overline{U} \times \overline{H} \dashrightarrow M^{\mathrm{BB}}$ is non-extendable, then $\Psi(E)$ is a positive dimensional subvariety in $M^{\mathrm{BB}}$.
Since $\bar{\varphi}:\, M^{\mathrm{BB}} \to \mathlcal{P}^{\mathrm{BB}}$ is quasi-finite, we know that $\bar{\varphi}(\Psi(E))$ has positive dimension.

On the other hand, $\delta(E)$ are indeterminacy points and thus will be mapped to points in ${\mathlcal{P}_U}^{\mathrm{BB}} \times {\mathlcal{P}_H}^{\mathrm{BB}}$. This gives a contradiction to the diagram \eqref{big-comm-extension}.
\end{proof}

\paragraph{Moduli space of morphisms into $M$}
To use the Hom scheme to parametrize morphisms, one usually needs to compactify the target space first. Since now we have the canonical projective compactification $M^{\mathrm{BB}}$ of the moduli space $M$, it is possible to define the moduli space morphisms into $M$ without referring to a particular chosen compactification. We follow the argument in \cite{Nogu} closely in this subsection.

Let $U$ be a smooth quasi-projective variety with a log smooth projective compactification $\overline{U}$. Let $\mathrm{Mor}(U,M)$ be the set of morphisms from $U$ to $M$. Let $\mathrm{Hom}(\overline{U},M^{\mathrm{BB}})$ be the usual Hom scheme. Then by Theorem~\ref{ext_thm}, one can regard $\mathrm{Mor}(U,M)$ as a subset of the Hom scheme $\mathrm{Hom}(\overline{U},M^{\mathrm{BB}})$.

\begin{theorem}
  \begin{itemize}
    \item[(i)] $\mathrm{Mor}(U,M)$ is a countable union of quasi-projective schemes, and there is a universal evaluation morphism
          \[
          U \times \mathrm{Mor}(U,M) \to M.
          \]
    \item[(ii)](Universal property) Let $T$ be a scheme and let $\varphi_T:\,U \times T \to M$ be a morphism. Then there exists a classifying morphism $T \to \mathrm{Mor}(U,M)$ such that $\varphi_T$ factors through the universal evaluation morphism.
  \end{itemize}
\end{theorem}

\begin{proof}
  Let $\varphi \in \mathrm{Mor}(U,M)$ and let $\bar{\varphi} \in \mathrm{Hom}(\overline{U},M^{\mathrm{BB}})$ be the extension. Let $Z$ be the irreducible component of $\mathrm{Hom}(\overline{U},M^{\mathrm{BB}})$ containing $\bar{\varphi}$. Then $\bar{\varphi}$ can be deformed to a family of morphisms
  \[
 \Phi:\, \overline{U} \times Z \to M^{\mathrm{BB}}.
  \]
  For each $z \in Z$, we denote by $\Phi_z$ the restriction of $\Phi$ on $\overline{U} \times \{z\}$, which is a proper morphism. Thus $\Phi^{-1}_z(D_{\infty})$ is a closed subscheme in $\overline{U}$, and $\Phi^{-1}(D_{\infty}) \to Z$ is a family of closed subschemes in $\overline{U}$. This induces a morphism
  \[
 Z \to \mathrm{Hilb}(\overline{U}),
  \]
  which sends $z$ to the class $[\Phi^{-1}_z(D_{\infty})]$.

  Denote by $D_U:= \overline{U} \setminus U$ the boundary divisor. Then $\mathrm{Hilb}(D_U)$ can be regarded as a closed subscheme of $\mathrm{Hilb}(\overline{U})$. Define $Z_U$ as the following fiber product:
  \[
    \xymatrix{
Z_U \ar@{^{(}->}[r] \ar[d]  & Z \ar[d] \\
\mathrm{Hilb}(D_U)  \ar@{^{(}->}[r] & \mathrm{Hilb}(\overline{U}).
    }
  \]
  Then $Z_U$ is a countable union of quasi-projective schemes, and for each $z \in Z_U$ one has $\Phi_z(U) \subset M$. Thus
  \[
\Phi|_{U \times Z_U}: \, U \times Z_U \to M.
  \]

  Note that $Z_U$ is nonempty since $\varphi \in Z_U$. It is clear that $\mathrm{Mor}(U,M)$ is a countable union of such $Z_U$.

  Now we check the universal property (ii). Without loss of generality, we assume that $T$ is irreducible. By Theorem~\ref{ext_thm} we can extend $\varphi_T$ and thus get a morphism $T \to \mathrm{Hom}(\overline{U},M^{\mathrm{BB}})$. It is then clear that $T \to \mathrm{Hom}(\overline{U},M^{\mathrm{BB}})$ factors through some $Z_U$.
\end{proof}

\subsubsection{Construction of bi-Hom schemes}
For a non-rigid morphism $\varphi:\, U \to M$, one can construct a maximal non-rigid morphism $\Phi:\,H_1 \times H_2 \to M$ in the following way.

Note that $[\varphi] \in \mathrm{Mor}(U,M)$. Since $\varphi$ is non-rigid, we know that the irreducible component $Z_1$ of $\mathrm{Mor}(U,M)$ containing $[\varphi]$ has positive dimension. Let $H_1$ be the desingularization of $Z_1$. We also get the induced evaluation morphism
\[
\varphi_1:\, U \times H_1 \to M.
\]
Choose a general point $u \in U$. Then the restricted morphism
\[
\psi:=\varphi_1|_{\{u\} \times H_1} \to M
\]
is again non-rigid. Let $Z_2 \subset \mathrm{Mor}(H_1,M)$ be the corresponding irreducible component, and let $H_2$ be the desingularization of $Z_2$. This gives the bi-evaluation morphism
\begin{align}
  \label{bi-eva}
\Phi:\, H_1 \times H_2 \to M.
\end{align}

Before we check that $\Phi:\, H_1 \times H_2 \to M$ is {\it maximal} in the sense of Definition~\ref{max_nr}, we will first recall the notion of {\it 1-pointed rigidity}.

\paragraph{One-pointed rigidity}
Let $U$ be a smooth quasi-projective variety as before. Fix marked points $u \in U$ and $x \in M$. Define the {\it moduli space of 1-pointed morphisms}
\[
\mathrm{Mor}\left( (U,u),(M,x) \right):=\{ \varphi \in \mathrm{Mor}(U,M) \,:\,\varphi(u)=x\}
\]
which is a subscheme of $\mathrm{Mor}(U,M)$.

\begin{definition}\label{1-pointed-rigidity}
The moduli space $M$ is said to have \textbf{1-pointed rigidity} property if for any pointed scheme $(U,u)$ and any marked point $x \in M$, every irreducible component of $\mathrm{Mor}\left( (U,u),(M,x) \right)$ is a zero-dimensional scheme.
\end{definition}

\begin{theorem}[Dimension bound]\label{dim-bound}
  Suppose that $M$ has the 1-pointed rigidity property, and $M$ is of log general type. Then for each irreducible component $H$ of $\mathrm{Mor}(U,M)$, the following hold:
  \begin{itemize}
    \item[(i)]  For each $u \in U$,the restriction of the evaluation map on $H \times \{u\}$ is {\it generically finite} onto its image;
    \item[(ii)] $\mathrm{dim}\,H \leq \mathrm{dim}\,M-1$.
  \end{itemize}
\end{theorem}
\begin{proof}
When $\mathrm{dim}\,U=1$ this has been proved in {\cite[Theorem~6.4]{JLSZ}}. The general case can be reduced to the curve case by taking some complete intersection curve in $U$.
\end{proof}

\paragraph{Maximality of $\Phi:\, H_1 \times H_2 \to M$}
Now we come back to the construction of maximal non-rigid families in $M$.
\begin{proposition}\label{produce-max-non-rig}
The bi-evaluation morphism $\Phi:\, H_1 \times H_2 \to M$ constructed in \eqref{bi-eva} is maximal in the sense of Definition~\ref{max_nr}.
\end{proposition}

\begin{proof}
  Recall that $\Phi$ is constructed from a non-rigid morphism $\varphi:\,U \to M$, and $H_1$ is the desingularization of some irreducible component of $\mathrm{Mor}(U,M)$. Then $U \to Z_2 \subset \mathrm{Mor}(H_1,M)$.
  One has the following commutative diagram
  \[
    \xymatrix{
  H_1 \times U' \ar[r] \ar[d] & M \ar@{=}[d] \\
  H_1 \times H_2 \ar[r] & M,
    }
  \]
where $H_2$ is the desingularization of $Z_2$ and $U'$ is the fiber product $U \times_{Z_2}H_2$.

By applying the same process to $H_2 \times H_1 \to M$, one obtains some irreducible component $Z_3 \subset \mathrm{Mor}(H_2,M)$ with the classifying morphism $H_1 \to Z_3$, and the following diagram
\begin{align}
  \label{squ_H_123}
    \xymatrix{
  H_2 \times H_1 \ar[r] \ar[d] & M \ar@{=}[d] \\
  H_2 \times Z_3 \ar[r] & M.
    }
\end{align}
  We will show that the natural morphism $H_1 \to Z_3$ is the identity map. Then the maximality of $\Phi$ follows from the universal property of $\mathrm{Mor}(\bullet,M)$.

  Choose a general point $u \in U$. Then by Theorem~\ref{dim-bound} (i) we know that the restricted morphism $H_1 \times \{u\} \to M$ is generically finite onto its image. Let $h_2$ be the image of $u$ in $H_2$. Then from \eqref{squ_H_123} we have the factorization
  \[
\{h_2\} \times H_1 \to \{h_2\} \times Z_3 \to M.
  \]
  Thus $H_1 \to Z_3$ is generically finite onto its image.

  On the other hand, the natural map $U' \to H_2$ induces the following morphism
  \begin{align}
    \label{pull_back}
 \mathrm{Mor}(H_2, M) \to \mathrm{Mor}(U', M).
  \end{align}
Since $M$ is hyperbolic and $U$ is smooth, we know that $\mathrm{Mor}(U', M) = \mathrm{Mor}(U, M)$, and thus \eqref{pull_back} gives a section of $H_1 \to Z_3$. Since both $H_1$ and $Z_3$ are irreducible, we know that $H_1=Z_3$.
\end{proof}

\subsection{Non-rigid Hodge subvarieties}
The purpose of this subsection is to construct the maximal non-rigid objects in the category of Hodge subvariety, in the sense of Definition~\ref{max_nr}.

As we have seen in Definition~\ref{definitioin weakly special subvarieties}, a weakly special subvariety which is NOT special necessarily arise in a ``family'', whose members behave like a factor in a Hodge subvariety of product type.

\begin{definition}\label{definition special subvariety of rigid non-rigid type} Let $(\Gbf,\Dcal)$ be a Hodge datum with associated Hodge variety $M=\Gamma\bsh\Dcal$, and let $(\Gbf',\Dcal')$ be a Hodge subdatum in $(\Gbf,\Dcal)$.

	(1) We say that $(\Gbf',\Dcal')$ in $(\Gbf,\Dcal)$ is rigid (or of non-factor type) if the centralizer $\Zbf_{\Gbf^\der}\Gbf'^\der$ is compact modulo center, i.e. does not contain any non-trivial non-compact normal $\Qbb$-subgroups. Otherwise it is said to be non-rigid (or of factor type).

	(2) A Hodge subvariety in $M$ is said to be rigid resp. non-rigid it it is defined by a Hodge subdatum in $(\Gbf,\Dcal)$ which is rigid resp. non-rigid.


\end{definition}

Here we prefer the terminology ``(non-)rigid'' Hodge subvariety in accordance with the notions of rigidity introduced in Section \ref{sec-2}. The more traditional terminology ``of (non-)factor type'' is already used in \cite{Ull}, which leads to the following characterization:

\begin{lemma}\label{lemma special subvarieties of factor type}
	Let $M=\Gamma\bsh\Dcal$ be a Hodge variety associated to some Hodge datum $(\Gbf,\Dcal)$, and let $M_0$ be a Hodge subvariety associated to some Hodge subdatum $(\Gbf_0,\Dcal_0)$. Then $M_0$ is non-rigid if and only if there exists a morphism of Hodge data $f: (\Gbf_1,\Dcal_1)\times(\Gbf_2,\Dcal_2)\ra(\Gbf,\Dcal)$
	such that \begin{itemize}
		\item $f: \Gbf_1^\der\times\Gbf_2^\der\ra \Gbf^\der$ is of finite kernel and $f_*: \Dcal_1\times\Dcal_2\ra \Dcal$ is finite onto its image, with $\Dcal_1$ and $\Dcal_2$ both of strictly positive dimension;
		\item there exists some CM subdatum $(\Tbf, h)$ in $(\Gbf_2,\Dcal_2)$ and some congruence subgroups $\Gamma'\subset(\Gbf_1\times\Gbf_2)(\Qbb)^+$ with $\Gamma''=\Gamma'\cap(\Gbf_1\times\Tbf)(\Qbb)^+$, so that the image of the morphism $(\Gbf_1,\Dcal_1)\times(\Tbf,h)\ra(\Gbf,\Dcal)$ is a Hodge subdatum in $(\Gbf,\Dcal)$ defining $M_0$.
	\end{itemize}

\end{lemma}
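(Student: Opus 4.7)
The plan is to treat each direction separately: the reverse implication is formal, while the forward implication constructs the product Hodge datum from the non-compact factor of the centralizer.

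\textbf{Direction $(\Leftarrow)$.} Given the morphism $f:(\Gbf_1,\Dcal_1)\times(\Gbf_2,\Dcal_2)\to(\Gbf,\Dcal)$ and the CM subdatum $(\Tbf,h)$ as in the hypothesis, with image defining $(\Gbf_0,\Dcal_0)$, we have $\Gbf_0^\der=f(\Gbf_1^\der)$ since $\Tbf$ is a torus, and $f(\Gbf_2^\der)\subset\Zbf_{\Gbf^\der}\Gbf_0^\der$ since $\Gbf_1,\Gbf_2$ commute in the direct product. The condition $\dim\Dcal_2>0$, combined with the Hodge-data axioms that exclude compact $\Qbb$-factors in $\Gbf_2^\ad$, forces $\Gbf_2^\der$ to be non-compact, and the finite-kernel hypothesis on $f$ transports this to $f(\Gbf_2^\der)$. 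Thus the centralizer contains a non-compact normal $\Qbb$-subgroup and $(\Gbf_0,\Dcal_0)$ is non-rigid.

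\textbf{Direction $(\Rightarrow)$.} Assume $(\Gbf_0,\Dcal_0)$ is non-rigid. The centralizer $Z:=\Zbf_{\Gbf^\der}\Gbf_0^\der$, a reductive $\Qbb$-group, has a non-trivial non-compact normal $\Qbb$-subgroup by assumption, so I choose $\Nbf$ to be an almost-simple non-compact $\Qbb$-factor of $Z^\der$. I then set $\Gbf_1:=\Gbf_0^\der$ with Hodge datum $\Dcal_1$ induced by passing to the adjoint quotient of $(\Gbf_0,\Dcal_0)$ and lifting along the central isogeny $\Gbf_0^\der\to\Gbf_0^\ad$; set $\Tbf$ to be the identity component of the center of $\Gbf_0$; and set $\Gbf_2:=\Nbf\cdot\Tbf$, an almost direct product since $\Nbf\subset Z$ centralizes $\Gbf_0\supset\Tbf$. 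The multiplication map $f:\Gbf_1\times\Gbf_2\to\Gbf$ is a morphism of $\Qbb$-groups whose restriction to derived parts $\Gbf_0^\der\times\Nbf$ has finite kernel (contained in $\Gbf_0^\der\cap\Nbf$, central in both). A base point $h_0\in\Dcal_0$ restricts to a CM homomorphism $h:\Sbb\to\Tbf_\Rbb$; the image of $\Gbf_1\times\Tbf$ under $f$ recovers the almost direct product $\Gbf_0^\der\cdot\Tbf=\Gbf_0$, and the image of $\Dcal_1\times\{h\}$ recovers $\Dcal_0$ by construction.

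\textbf{Main obstacle.} The crucial step is to endow $\Gbf_2$ with a Hodge datum $\Dcal_2$ of positive dimension containing $(\Tbf,h)$ as a CM subdatum, compatibly with the ambient $\Dcal$. The existence of $h_\Nbf:\Sbb\to\Nbf_\Rbb$ making $\Nbf$ a $\Qbb$-group of Hodge type follows from the classical fact that, for any $h'\in\Dcal$, the Cartan involution $\Ad\,h'(\sqrt{-1})$ of $\Gbf^\ad_\Rbb$ restricts to a Cartan involution on the adjoint quotient of the centralizer $Z^\ad_\Rbb$, and hence on the $\Qbb$-factor $\Nbf^\ad$. Setting $h_2:=(h_\Nbf,h)\in\Gbf_{2,\Rbb}=\Nbf_\Rbb\cdot\Tbf_\Rbb$ and $\Dcal_2:=\Gbf_2(\Rbb)^+\cdot h_2$ produces the Hodge datum with the prescribed CM subdatum. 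The anticipated technical subtlety is showing that $h_\Nbf$ can be chosen so that the composite $f_*(h_1,h_2):\Sbb\to\Gbf_\Rbb$ (with $h_1\in\Dcal_1$ a lift of $h_0$) lies in the prescribed orbit $\Dcal$ rather than in a different $\Gbf(\Rbb)^+$-orbit; I would handle this by taking $h_\Nbf$ to be the $\Nbf$-component, via the $\Gbf_0$-isotypic decomposition of $\Lie\Gbf_\Rbb$, of a generic point of $\Dcal$ lying in a neighborhood of $h_0$. The congruence-level bookkeeping ($\Gamma'\subset(\Gbf_1\times\Gbf_2)(\Qbb)^+$ and $\Gamma''=\Gamma'\cap(\Gbf_1\times\Tbf)(\Qbb)^+$) is then routine.
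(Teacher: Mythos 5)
Your treatment of the $(\Leftarrow)$ direction is essentially the same as the paper's and is fine: the finite-kernel hypothesis ensures $\Gbf_0^\der=f(\Gbf_1^\der)$ commutes with the non-compact $f(\Gbf_2^\der)$.

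For the $(\Rightarrow)$ direction there are genuine gaps. The paper's key structural choice is a $\Qbb$-torus $\Tbf$ of $\Gbf_0$ \emph{containing the image of $h_0$} (so essentially a maximal $\Qbb$-torus through $h_0$), and then takes $\Gbf_2$ to be the subgroup generated by $\Tbf$ together with the non-compact part $\Hbf'$ of $\Nbf_\Gbf(\Gbf_0^\der)^\circ$. Because $\mathrm{Im}(h_0)\subset\Tbf\subset\Gbf_2$, the point $h_0$ already defines an element of $\Gbf_2(\Rbb)$, and $\Dcal_2$ is simply the $\Gbf_2(\Rbb)^+$-orbit of (the restriction of) $h_0$; compatibility with the ambient $\Dcal$ is automatic and there is no ``main obstacle'' to resolve. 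You instead set $\Tbf$ to be the connected center of $\Gbf_0$. That torus does not contain $\mathrm{Im}(h_0)$ unless $\Gbf_0$ is itself a torus, so the statement that $h_0$ ``restricts'' to $\Tbf_\Rbb$ only makes sense as a projection along the almost direct product $\Gbf_0=\Gbf_0^\der\cdot\Tbf$ up to isogeny, and, more importantly, your $\Gbf_2=\Nbf\cdot\Tbf$ then carries no canonical point of $\Dcal$. You are thus forced into the separate construction of $h_\Nbf$ and the compatibility argument ``via the $\Gbf_0$-isotypic decomposition of $\Lie\Gbf_\Rbb$ near $h_0$'', which you flag as a subtlety but do not carry out --- and that is exactly where the paper's different choice of $\Tbf$ does the work for free.

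Two further specific problems: (i) your justification that $\Gbf_2=\Nbf\cdot\Tbf$ is an almost direct product, ``since $\Nbf\subset Z$ centralizes $\Gbf_0\supset\Tbf$'', is incorrect --- $Z=\Zbf_{\Gbf^\der}\Gbf_0^\der$ centralizes $\Gbf_0^\der$, not $\Gbf_0$ or its center, so there is no reason $\Nbf$ and $\Tbf$ commute; (ii) you skip the reduction to the adjoint semi-simple case $\Gbf=\Gbf^\ad=\Gbf^\der$, which is what allows the paper to treat $\Gbf_0$ itself as the first factor and keep the bookkeeping of weight and central components under control.
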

The idea of this lemma is the same as the case of Shimura varieties treated in \cite[Section 3]{Ull}, and we only present an outline.
\begin{proof}[Sketch of the proof]

	``$\Leftarrow$'': When $f: (\Gbf_1,\Dcal_1)\times(\Gbf_2,\Dcal_2)\ra(\Gbf,\Dcal)$ and $(\Tbf,h)\subset(\Gbf_2,\Dcal_2)$ are given as in the statement, the image $(\Gbf_0,\Dcal_0)$ of $(\Gbf_1,\Dcal_1)\times(\Tbf,h)$ is the subdatum in $(\Gbf,\Dcal)$ defining $M_0$. Since $\Gbf_1^\der\times\Gbf_2^\der\ra\Gbf^\der$ is of finite kernel, the $\Qbb$-subgroup $\Gbf_0^\der$ equals the image   $f(\Gbf_1^\der\times\{e_{\Gbf_2}\})$ and is centralized by the non-commutative $\Qbb$-group $f(\{e_{\Gbf_1}\}\times\Gbf_2^\der)$ whose associated Lie group is non-compact. Thus $M_0$ is non-rigid.\bigskip

	``$\Rightarrow$'': We are reduced to the case $\Gbf=\Gbf^\ad=\Gbf^\der$. The neutral component $\Nbf$ of the normalizer $\Nbf_\Gbf(\Gbf_0^\der)$ is a connected reductive $\Qbb$-subgroup of $\Gbf$ containing $\Gbf_0$, and admits a decomposition into almost direct product: $$\Nbf=\Cbf\cdot\Hbf_{c}\cdot \Hbf_{nc}$$ where \begin{itemize}
		\item $\Cbf$ is the connected center of $\Nbf^\circ$;
		\item $\Hbf_c$ is generated by simple $\Qbb$-factors in $\Nbf^\der$ whose associated Lie groups are compact;
		\item $\Hbf_{nc}$ is generated by simple $\Qbb$-factors in $\Nbf^\der$ whose associated Lie groups are non-compact.
	\end{itemize}

	Note that $\Gbf_0^\der$ is a normal $\Qbb$-subgroup strictly contained in $\Hbf_{nc}$, so we have a further decomposition into an almost direct product $\Hbf_{nc}=\Gbf_0^\der\Hbf'$ and $\Gbf_0$ is contained in $\Cbf\cdot\Hbf_{nc}$. Take a point $h_0\in\Dcal_0$ whose image is contained in a $\Qbb$-torus $\Tbf$ of $\Gbf_0$, and thus we may write $\Tbf=\Tbf_\Cbf\cdot\Tbf'\cdot\Tbf_0$ where \begin{itemize}
		\item $\Tbf_\Cbf$ is the neutral component of $\Tbf\cap\Cbf$;
		\item $\Tbf'$ is the neutral component of $\Tbf\cap\Hbf'$;
		\item $\Tbf_0$ is the neutral component of $\Tbf\cap\Gbf_0^\der$.
	\end{itemize} We thus put $\Gbf_2$ to be the $\Qbb$-subgroup in $\Nbf$ generated by $\Hbf'$ and $\Tbf$, and let $\Dcal_2$ be the $\Gbf_2(\Rbb)^+$-conjugacy class of $h$. Then $(\Gbf_2,\Dcal_2)$ is a Hodge subdatum itself inside $(\Gbf,\Dcal)$, with $\Gbf_2^\der=\Hbf'$ centralizing $\Gbf_0^\der$ in $\Gbf$.

	The two subdata $(\Gbf_0,\Dcal_0)$ and $(\Gbf_2,\Dcal_2)$ inside $(\Gbf,\Dcal)$ produce a map of Hodge data $$f: (\Gbf_0,\Dcal_0)\times(\Gbf_2,\Dcal_2)\ra(\Gbf,\Dcal)$$ so that the $\Qbb$-group homomorphism $\Gbf_0^\der\times\Gbf_2^\der\ra\Gbf^\der$ is injective of image $\Hbf_{nc}=\Gbf_0^\der\Hbf'$. The image of $(\Gbf_0,\Dcal_0)\times(\Tbf,h)$ in $(\Gbf,\Dcal)$ is simply $(\Gbf_0,\Dcal_0)$ defining $M_0$.
\end{proof}

\begin{remark}\label{remark special subvariety of factor type}

	Note that in the lemma above one may move $M_0$ into subvarieties uniformized by $\Dcal_1\times\{h'\}$ with $h'$ moving in $\Dcal_2$ not necessarily corresponding to a subdatum of CM type. This is exactly the description of totally geodesic subvarieties in Shimura varieties characterized in \cite{Moo} and they are equivalently known as weakly special subvarieties.
\end{remark}

The lemma above shows that a non-rigid Hodge subvariety $M_0$ in $M=\Gamma\bsh\Dcal$ defined by $(\Gbf_0,\Dcal_0)$ is realized, after suitably shrinking $\Gamma$, as a factor in a product: there exists an injective map of Hodge varieties $M_1\times M_2\ra M$ induced by $(\Gbf_1,\Dcal_1)\times(\Gbf_2,\Dcal_2)\ra (\Gbf,\Dcal)$ at the level of Hodge data, such that $M_0\isom M_1\times\{x_2\}$ for some special point $x_2\in M_2$, and $\Dcal_1\times\Dcal_2\ra\Dcal$ is finite onto its image. When $x_2$ moves along $M_2$, the resulting weakly special subvarieties in $M$ are all isomorphic to $M_1$ (and to $M_0$), and we obtain an injective map $\iota_{x_2}: M_1\ra M$ of complex analytic varieties, which is compatible with the group-theoretic data, in the rough sense that it lifts to an embedding $\lambda_{x_2}:\Dcal_1\ra\Dcal$ equivariant with respect to the Lie group homomorpism $\Gbf_1^\der(\Rbb)^+\ra\Gbf^\der(\Rbb)^+$. For the moment we write $\Hom(M_1,M)$ for the space of such embeddings, and endow it with the subspace topology from the Hom space between two complex analytic varieties. Then the natural map $M_2\ra\Hom(M_1,M)$ is well-defined as a continuous embedding. We may thus think of $M_2$ in an ad hoc way as a space of ``good'' embeddings from $M_1$ to $M$.

Since $M_2$ is itself a Hodge variety and can be embedded into $M$ by the choice of base points in $M_1$, we can continue the same group-theoretic procedure. Choose a special point $x_1$ in $\Dcal_1$, we can form a Hodge datum $(\Tbf_1,\{x_1\})\times(\Gbf_2,\Dcal_2)$, whose image in $(\Gbf,\Dcal)$ is a subdatum $(\Gbf',\Dcal')$ inducing a Hodge subvariety isomorphic to $M_2$. The non-compact $\Qbb$-factors in $\Zbf_{\Gbf}(\Gbf'^\der)$ generate a $\Qbb$-subgroup $\Hbf_3$, and we obtain further a subdatum $(\Gbf'',\Dcal'')$ such that $\Gbf''^\der=\Hbf_3\Gbf'^\der$, and we obtain $(\Gbf'',\Dcal'')\isom(\Hbf^\ad,\Dcal_3)\times(\Gbf_2^\ad,\Dcal_2)$ and the resulting Hodge subvariety is isomorphic to $M_3\times M_2$ with $M_3$ the Hodge variety associated to $(\Hbf^\ad,\Dcal_3)$. This realizes $M_3$ as a space of ``good'' embeddings from $M_2$ into $M$. Furthermore, we have a natural embedding $M_0\mono M_3$: $M_0$ can be realized as the Hodge variety in $M$ by the $(\Gbf_0,\Dcal_0)\isom\Image((\Gbf_1,\Dcal_1)\times(\Tbf_2,\{x_2\})\ra(\Gbf,\Dcal))$ for suitable subdatum $(\Tbf_2,\{x_2\})\subset(\Gbf_2,\Dcal_2)$ of CM type, and $\Gbf_0^\der$ in $\Gbf$ centralizes $\Gbf'^\der$, which necessarily lies in $\Hbf_3$ and induces $M_0\mono M_3$.

Note that this procedure stops at $M_3$ in the following sense: the $\Qbb$-subgroup of $\Zbf_{\Gbf}(\Hbf_3)$ generated by non-compact $\Qbb$-factors must be equal to $\Hbf_3\Gbf'^\der$. Otherwise one finds a larger semi-simple $\Qbb$-subgroup $\Hbf_4\supsetneq\Gbf'^\der$ which is generated by non-compact $\Qbb$-factors and centralizes $\Hbf_3$. Since $\Hbf_3\supset\Gbf_0^\der$, the commutativity between $\Hbf_3$ and $\Hbf_4$ forces the inclusion $\Hbf_4\subset\Zbf_\Gbf(\Gbf_0^\der)$, which implies that $\Hbf_4$ is contained in $\Gbf'^\der$ because $\Hbf_4$ is generated by non-compact $\Qbb$-factors. This is absurd.

Therefore the centralizer trick stops at $M_3\times M_2$: we have realized $M_3\isom\Hom(M_2,M)$ and $M_2\isom\Hom(M_3,M)$ via a group-theoretic construction, and the natural evaluation map $$M_3\times M_2\ra M$$ can be also realized as a map between Hodge varieties which is injective onto its image. Note that the image of $M_3\times M_2$ in $M$ is actually {\it maximal} among non-rigid Hodge subvarieties: otherwise the centralizer trick should continue.

This construction provides a counterpart to the bi-Hom construction (\S\ref{Bi-Hom}) in the category of Hodge varieties and is of independent interest.

\subsection{``Structurally-atypical'' intersection}\label{subsec-satyp}
In this subsection, we try to interpret the non-rigid locus $\mathrm{NRL}(\mathcal{M})$ as some ``structurally-atypical'' intersection of the moduli scheme $M$ with product period subdomains in $\Gamma \backslash \mathcal{D}$.

More precisely, from Proposition~\ref{Q-nonsimplicity} we know that the restriction of the period map $\Phi:\,M \to \Gamma \backslash \mathcal{D}$ on a product subvariety $Y:= Y_1 \times Y_2$ must factor through a product of period subdomains $(\varphi_1,\varphi_2):\,Y_1 \times Y_2 \to \Gamma_1 \backslash \mathcal{D}_1 \times \Gamma_2 \backslash \mathcal{D}_2$. In particular, one obtains the following diagram
\begin{align}
  \label{sa_int}
  \xymatrix{
    Y_1 \times Y_2 \ar@{^{(}->}[rd] \ar@{^{(}->}[r] & Z \ar@{^{(}->}[d] \ar[r] & \Gamma_1 \backslash \mathcal{D}_1 \times \Gamma_2 \backslash \mathcal{D}_2  \ar@{^{(}->}[d] \\
  & M \ar[r]^{\Phi} &   \Gamma \backslash \mathcal{D},
    }
\end{align}
where $Z$ is the fiber product of $M$ and $\Gamma_1 \backslash \mathcal{D}_1 \times \Gamma_2 \backslash \mathcal{D}_2$.

\begin{definition}
We say that $M$ and $\Gamma_1 \backslash \mathcal{D}_1 \times \Gamma_2 \backslash \mathcal{D}_2$ have {\it structurally-atypical intersection} $Z$ in $\Gamma \backslash \mathcal{D}$ if there exists $Y_1 \times Y_2 \subset Z$ which fits into \eqref{sa_int}.
\end{definition}

Inspired by the Zilber-Pink Conjecture, we will formulate some conjecture about the finiteness of maximal atypical intersections. To simplify the notions, let us first consider in the general setting of special structures (cf. \cite{biKUY} and \cite{Kling17}).

\begin{definition}\label{M-ws}

Let $X$ be a complex analytic space with a special structure on it, so that one can speak of {\it special subvarieties} and {\it weakly special subvarieties} of $X$. Examples include (semi-)abelian varieties, Shimura varieties and Hodge varieties (i.e. arithmetic quotients of period domains). For a closed subvariety $W \subset X$, we denote by $\langle W \rangle_{\mathrm{ws}}$ the smallest weakly special subvariety containing $W$.

Let $M \subset X$ be a closed subvariety. As in Definition~\ref{max_nr}, we say a product subvariety $Y_1 \times Y_2 \subset M$ is maximal if there is no $Y_1 \times Y_2 \subsetneq Y'_1 \times Y'_2 \subset M$ with compatible product structures.
We say that $W=W_1 \times W_2 \subset X$ is a {\it weakly special product subvariety} if both $W_1 \times \{\mathrm{point}\}$ and $\{\mathrm{point}\} \times W_2$ are weakly special in $X$. For a closed subvariety $W \subset X$, we say $W$ is {\it $M$-weakly special} if $W \subset M$ and $W$ is weakly special in $X$.
\end{definition}

\begin{conjecture}[Finiteness]\label{finiteness_atyp}
  There exists a finite set $\Sigma_{\mathrm{p}} \cup \Sigma_M$, where $\Sigma_{\mathrm{p}}$ consists of weakly special product subvarieties and $\Sigma_M$ consists of $M$-weakly special subvarieties of $X$, such that the following property holds:\\
Let $Y:=Y_1 \times Y_2 \subset M$ be a maximal product subvariety of $M$. Then:
\begin{itemize}
  \item[(i)] Either $W=W_1 \times W_2 \in \Sigma_{\mathrm{p}}$ is a weakly special product subvariety of $X$ and $\langle Y_1 \times Y_2 \rangle_{\mathrm{ws}}=W_1 \times W_2$;
  \item[(ii)] or $W \in \Sigma_M$ is a $M$-weakly special subvariety of $X$ and $Y_1 \times Y_2 \subset W \subset M$.
\end{itemize}
\end{conjecture}

\begin{remark}
  \begin{itemize}
    \item[(1)] Conjecture~\ref{finiteness_atyp} is already unknown and interesting for the case that $X$ is an abelian variety.
    \item[(2)] The appearence of $\Sigma_M$ in Conjecture~\ref{finiteness_atyp} is necessary: Consider the extremal case that $M=X=\mathcal{A}_g$ is a Shimura variety containing product subvarieties like $\mathcal{A}_{g-k} \times \mathcal{A}_k$. Then $\Sigma_{\mathrm{p}}= \varnothing$ and $\Sigma_M=\{M\}$.
  \end{itemize}
\end{remark}

\subsection{Conjecture on the Zariski closure of $\mathrm{NRL}(\mathcal{M})$}
Conjecture~\ref{finiteness_atyp} leads us to the following conjectural description of the structure of the Zariski closure of non-rigid locus:

\begin{conjecture}[Structure of $\overline{\mathrm{NRL}(\mathcal{M})}^{\mathrm{Zar}}$]\label{Zar_closure}
The Zariski closure of the non-rigid locus $\mathrm{NRL}(\mathcal{M})$ in $M$ can be decomposed as:
\[
\overline{\mathrm{NRL}(\mathcal{M})}^{\mathrm{Zar}} = \bigcup^n_{i=1} P_i \cup \bigcup^m_{j=1} N_j,
\]
where:
\begin{itemize}
    \item[(i)] Each $P_i$ is a subvariety dominated by a fiber space whose generic fiber is a product variety.
    \item[(ii)] Each $N_j$ is birational to a Shimura variety of rank $\geq 2$.
\end{itemize}
\end{conjecture}

\begin{theorem}\label{condi_proof}
If Conjecture~\ref{finiteness_atyp} holds for the injective period map $\Phi:\,M \hookrightarrow \Gamma \backslash \mathcal{D}$, then Conjecture~\ref{Zar_closure} holds on $M$.
\end{theorem}
\begin{proof}
  Note that $N_j$ are precisely these $M$-weakly special subvarieties in Conjecture~\ref{finiteness_atyp}. Therefore, the finiteness of $\{N_j\}$ is derived from (ii) of Conjecture~\ref{finiteness_atyp}. Next, we will focus on the finiteness of $\{P_i\}$.

  Consider a maximal product subvariety $Y=Y_1 \times Y_2$ of $M$ as a fiber of some $P_i$. Let $W= W_1 \times W_2 = \langle Y_1 \times Y_2 \rangle_{\mathrm{ws}}$. Then $W \in \Sigma_{\mathrm{p}}$ and we know the finiteness of such $W$ by Conjecture~\ref{finiteness_atyp}. Therefore, it only remains to consider product subvarieties $Y^{(i)}= Y^{(i)}_1 \times Y^{(i)}_2$ with the fixed ``weakly-special closure'' $\langle Y^{(i)}_1 \times Y^{(i)}_2 \rangle_{\mathrm{ws}}=W_1 \times W_2$.

  Let $Z$ be an irreducible component of $\overline{\mathrm{NRL}(\mathcal{M})}^{\mathrm{Zar}}$ containing the above $Y^{(i)}$.
  Let $V$ be the irreducible component of the intersection of $M$ and $W_1 \times W_2$ in $\Gamma \backslash \mathcal{D}$ containing $Z$. Then we have $Y^{(i)} \subset V \subset W_1 \times W_2$. Replacing $W_j$ by the image of composed map $V \hookrightarrow W_1 \times W_2 \xrightarrow{\mathrm{pr}_j} W_j$ ($j=1,2$), we may and do assume that $W_1$ and $W_2$ are quasi-projective varieties. Denote by $\overline{W_j}$ the Baily-Borel compactification of $W_j$ (cf. Theorem~\ref{BB_comp}). Let $\overline{Y^{(i)}_j}$ be the Zariski closure of $Y^{(i)}_j$ in $\overline{W_j}$ and let $\overline{V}$ be the Zariski closure of $V$ in $\overline{W_1} \times \overline{W_2}$.

  We fix the polarization $L_1 \boxtimes L_2$ on $\overline{W_1} \times \overline{W_2}$, which is induced by the Griffiths line bundles on $\overline{W_j}$. Then for any closed subscheme $Z$ of $\overline{W_1} \times \overline{W_2}$, one can define the degree $\mathrm{deg}\,(Z)=\mathrm{deg}_{L_1 \boxtimes L_2}(Z)$ of $Z$. In the rest of the proof, we will find an upper bound of $\mathrm{deg}\,(\overline{Y^{(i)}})$ which is independent of the index $i$. With this upper bound and the general theory of Hilbert scheme, one can establish the required finiteness of fiber spaces $P_i$.

  Let $d_j:= \mathrm{deg}_{L_j}(\overline{W_j})$ be the volume of $L_j$. Let $d_V$ be the degree of $\overline{V}$. For each $y \in Y^{(i)}_2$, consider $V_y := V \cap (W_1 \times \{y\})$ and $\overline{V}_y:= \overline{V} \cap (\overline{W_1} \times \{y\})$. Then one has
  \[
\mathrm{deg}\,(\overline{V}_y) \leq C(d_V,d_1).
  \]
  Here $C(d_v,d_1)$ is a constant which only depends on $d_v$ and $d_1$. Denote by ${W}_y:= \mathrm{pr}_1({V}_y)$ the image of the projection to $W_1$. Similarly, $\overline{W}_y:= \mathrm{pr}_1(\overline{V}_y)$. It is clear that $\mathrm{deg}\,(\overline{W}_y) \leq C'(d_v,d_1)$ for some new constant $C'(d_V,d_1)$.

  Note that for each $y  \in Y^{(i)}_2$, one has $Y^{(i)}_1 \times \{y\} \subset {V}_y$ and thus $Y^{(i)}_1 \subset {W}_y$. That means
  \[
    Y^{(i)}_1 \subset \bigcap_{y  \in Y^{(i)}_2} {W}_y.
  \]
  By the Noetherian property, the intersection will stabilize in finite steps of length $\leq \mathrm{dim}\,W_1$. Let $X^{(i)}_1$ be the irreducible component of it containing $Y^{(i)}_1$. Then we have $Y^{(i)}_1 \times Y^{(i)}_2 \subset X^{(i)}_1 \times Y^{(i)}_2 \subset V$. Since $Y^{(i)}_1 \times Y^{(i)}_2$ is a maximal product, we know that $X^{(i)}_1=Y^{(i)}_1$. Therefore,
  \[
\mathrm{deg}\,(\overline{Y^{(i)}_1}) = \mathrm{deg}\,(\overline{X^{(i)}_1}) \leq C''(d_V,d_1,\mathrm{dim}\,W_1),
  \]
  where the upper bound $C''(d_V,d_1,\mathrm{dim}\,W_1)$ is independent of the index $i$.

  Similar argument gives an upper bound of $\mathrm{deg}\,(\overline{Y^{(i)}_2})$, which is also independent of $i$. Thus we obtain an uniform degree bound of the maximal products $\overline{Y^{(i)}_1} \times \overline{Y^{(i)}_2}$. Once we fix the dimension and the degree of the closed subvariety, there are only finite many possibilities of its Hilbert polynomials. Finally, by using Hilbert scheme one can construct finitely many fiber spaces $P_i$ containing $Y^{(i)}_1 \times Y^{(i)}_2$ as fibers.
\end{proof}


\subsection{Special subvarieties of moduli spaces}

Inspired by Saito's classification and Viehweg-Zuo's example, we propose the following conjecture:

\begin{conjecture}[Specialness]\label{NRLconj}
  Let $\mathcal{M}_h$ be a moduli space satisfying the conditions in Theorem \ref{thm-very-gen-rig}. Consider the maximally non-rigid morphism
  \[
  H_1 \times H_2 \to \mathcal{M}_h
\]
constructed in Proposition~\ref{produce-max-non-rig}. Then both $H_i$ are weakly special subvarieties with respect to the $\mathbb Z$-PVHS associated with the universal family.
\end{conjecture}

In \S\ref{subsec-satyp}, maximal products $H_1 \times H_2$ in $\mathcal{M}_h$ were interpreted as maximal {\it structurally atypical} intersections. Conjecture~\ref{NRLconj} establishes a connection with the classical notion of ``special sub-objects'' in the context of Shimura and Hodge varieties. This connection allows us to employ tools from functional transcendence, such as various versions of Ax-Schanuel type theorems, as explored in the next section. In this light, Conjecture~\ref{NRLconj} may be viewed as an analogue of Moonen's characterization of special subvarieties of Shimura varieties.

\subsection{Distribution of special non-rigid locus}\label{sec-4-5}

Let $\mathscr{M}_h$ be a moduli space satisfying the assumptions of Theorem~\ref{thm-very-gen-rig}.
Denote by
\[
\varphi:\mathscr{M}_h \longrightarrow \Gamma\backslash\mathcal D
\]
the generic finite period map with generic Mumford-Tate datum $(\mathbf G,\mathcal D)$.
Consider the collection
\[
\{\,Y_1^i\times Y_2^i\;\mid\; i\in I\,\}
\]
of all bi-Hom schemes of non-rigid log maps.
We work under the hypothesis that each $Y_1^i\times Y_2^i$ is a {\it weakly special subvariety} with respect to the $\mathbb Z$-VHS $\mathbb V$; such a product is called a \textbf{weakly special non-rigid locus}. 

Let $Z$ be an irreducible component of the Zariski closure
\[
\overline{\bigcup_{i\in I}Y_1^i\times Y_2^i}^{\text{Zar}} .
\]
Set $\mathbb V_Z:=\mathbb V|_Z$ and let
\[
\varphi_Z:Z\longrightarrow \Gamma_Z\backslash\mathcal D_Z
\]
be the restricted period map with generic Mumford-Tate datum $(\mathbf G_Z,\mathcal D_Z)$.
Note that every $Y_1^i\times Y_2^i$ is also a weakly special subvariety for $\mathbb V_Z$.\\[.1cm]

\begin{lemma}\label{lem:factors-weakly-special}
Assume $Y_1\times Y_2\subset S$ is weakly special for $\mathbb V$.
Then for any points $p\in Y_2$, $q\in Y_1$, the subvarieties
$Y_1\times\{p\}$ and $\{q\}\times Y_2$ are weakly special.
\end{lemma}
\begin{proof}
By Proposition~\ref{Q-nonsimplicity} the algebraic monodromy group of $Y_1\times Y_2$ decomposes as $\mathbf H_1\times\mathbf H_2$ with $\mathbf H_1,\mathbf H_2$ acting on the two factors.
The product $Y_1\times Y_2$ is contained in $\Gamma_{\mathbf H_1}\backslash\mathcal D_{\mathbf H_1}\times\Gamma_{\mathbf H_2}\backslash\mathcal D_{\mathbf H_2}$.
A standard linear algebra fact states that for subspaces $W\subset V_1\oplus V_2$ one has $\dim W\ge \dim(W\cap V_1)+\dim(W\cap V_2)$.
Applying this to the inclusion
\[
Y_1\times Y_2 \;\subset\; \bigl(\Gamma_{\mathbf H_1}\backslash\mathcal D_{\mathbf H_1}\bigr)\times\bigl(\Gamma_{\mathbf H_2}\backslash\mathcal D_{\mathbf H_2}\bigr)
\]
and intersecting with the slices $\Gamma_{\mathbf H_1}\backslash\mathcal D_{\mathbf H_1}\times\{p\}$ and $\{q\}\times\Gamma_{\mathbf H_2}\backslash\mathcal D_{\mathbf H_2}$ yields
\[
\dim(Y_1\times Y_2)\;\ge\; \dim(Y_1\times\{p\})+\dim(\{q\}\times Y_2).
\]
Since both sides are in fact equal, the inequality is an equality and the two slices are weakly special.
\end{proof}

\begin{lemma}\label{lem:partition}
With the above notation, the index set $I$ can be partitioned as $I=I_1\sqcup I_2\sqcup I_3\sqcup I_4$, where
\begin{itemize}
\item[$(I_1)$] $Y_1^i\times Y_2^i$ is {\it monodromically atypical} w.r.t. $(Z,\mathbb V_Z)$;
\item[$(I_2)$] $Y_1^i\times Y_2^i$ is monodromically typical, and after possibly swapping the factors,
         $\{q\}\times Y_2^i$ is monodromically typical while $Y_1^i\times\{p\}$ is monodromically atypical;
\item[$(I_3)$] $Y_1^i\times Y_2^i$ is monodromically typical, and both factors are monodromically atypical;
\item[$(I_4)$] $Y_1^i\times Y_2^i$ is monodromically typical, and both factors are monodromically typical.
\end{itemize}
(Here ``monodromically (a)typical'' always refers to the pair $(Z,\mathbb V_Z)$.)

\smallskip
\noindent
Let $\{(\mathbf M_k,\mathcal D_k)\}_{k=1}^N$ be the list of all weak Hodge subdata of $(\mathbf G_Z,\mathcal D_Z)$ with $\mathbf M_k$ a proper normal algebraic subgroup of $\mathbf G_Z^{\mathrm{der}}$.
Then $I_3$ splits further:
\[
I_3 = I_3'\;\sqcup\; I_3'', \qquad
I_3' = \bigsqcup_{u,v=1}^N I_{uv},
\]
where for $i\in I_{uv}$ the algebraic monodromy group of $Y_1^i\times\{p\}$ is contained in $\mathbf M_u$ and that of $\{q\}\times Y_2^i$ is contained in $\mathbf M_v$, and we choose the subgroups minimal with this property.
The set $I_3''$ consists of those indices for which at least one of the two factor-monodromy groups is not contained in any $\mathbf M_k$; consequently each such $Y_1^i\times Y_2^i$ lies in a proper subvariety of $Z$ coming from the fibrations of Theorem~\ref{BU24}.

\smallskip
\noindent
If $I_4\neq\varnothing$, then the algebraic monodromy group $\mathbf H_Z$ of $\mathbb V_Z$ is of Hodge level $1$ and $\varphi_Z:Z\to\Gamma_Z\backslash\mathcal D_{\mathbf H_Z}$ is dominant.
\end{lemma}
\begin{proof}
(1) The partition is clear from the definitions.

(2) Theorem~\ref{BU24} provides finitely many fibrations $h_j:C_j\to B_j$ ($j=1,\ldots,n$) associated to $(Z,\mathbb V_Z)$.
After reordering we may assume $C_j\subsetneq Z$ for $j\le w$ and $C_j=Z$ for $j>w$.
For $j>w$ the algebraic monodromy group of every fibre of $h_j$ is one of the $\mathbf M_k$.
Take $i\in I_3$. Because $Y_1^i\times\{p\}$ is monodromically atypical, it is contained in some fibre of some $h_j$.
If $j>w$ then its monodromy group lies in some $\mathbf M_u$; similarly for $\{q\}\times Y_2^i$.
If both monodromy groups are contained in some $\mathbf M_u,\mathbf M_v$, then $i\in I_3'$; otherwise $i\in I_3''$.

(3) Suppose $I_4\neq\varnothing$ and pick $i\in I_4$.
By Proposition~\ref{Q-nonsimplicity} the monodromy group of $Y_1^i\times Y_2^i$ is $\mathbf H_1\times\mathbf H_2$, with $\mathbf H_1,\mathbf H_2$ acting on the two factors.
Typicality of the product and of its factors (definition of $I_4$) implies equalities of Hodge-Lie algebra dimensions:
\[
\dim\mathfrak h_Z^{-1}-\dim\varphi_Z(Z)=\dim\mathfrak h_1^{-1}-\dim\varphi_Z(Y_1^i)=\dim\mathfrak h_2^{-1}-\dim\varphi_Z(Y_2^i),
\]
and $\dim\mathfrak h_Z^{-k}=\dim\mathfrak h_1^{-k}=\dim\mathfrak h_2^{-k}=0$ for $k\ge2$.
Hence $\dim\mathfrak h_Z^{-k}=0\;(k\ge2)$ and $\dim\mathfrak h_Z^{-1}=\dim\varphi_Z(Z)$; therefore $\mathbf H_Z$ is of level $1$ and $\varphi_Z$ is dominant.
\end{proof}

\begin{theorem}[Zariski closure of weakly special non-rigid loci]\label{thm:snrl}
  Assume the setting above and let $\mathbf H_Z$ be the algebraic monodromy group of $\mathbb V_Z$.
\begin{enumerate}
\item[(1)] If $\mathbf H_Z$ is $\mathbb Q$-simple or if $I_4\neq\varnothing$, then $\mathbf H_Z$ is of Hodge level $1$ and $\varphi_Z:Z\to\Gamma_Z\backslash\mathcal D_{\mathbf H_Z}$ is dominant.
\item[(2)] If $\mathbf H_Z$ is not $\mathbb Q$-simple and $I_4=\varnothing$, then one of the following holds:
    \begin{itemize}
    \item $Z$ is a product variety;
    \item there exists a non-trivial fibration $h:Z\to B$ such that every special non-rigid locus $Y_1^i\times Y_2^i$ is contained in a fibre of $h$, and all fibres share the same algebraic monodromy group $\mathbf N$, which is a proper normal subgroup of $\mathbf H_Z$.
    \end{itemize}
\end{enumerate}
In other words, an irreducible component of $\overline{\bigcup_{i\in I}Y_1^i\times Y_2^i}^{\text{Zar}}$ is either birational to a Shimura variety of rank $\geq 2$, or dominated by a fiber space.
\end{theorem}
\begin{proof}
{\it Proof of (1).}
If $I_4\neq\varnothing$ the statement follows from Lemma~\ref{lem:partition}(3).
Assume $\mathbf H_Z$ is $\mathbb Q$-simple and $I_4=\varnothing$.
Suppose, for contradiction, that $\dim\mathfrak h_Z^{-1}-\dim\varphi_Z(Z)\neq0$.
Take any $i\in I$ with $Y_1^i\times Y_2^i$ monodromically typical (such exist, otherwise Theorem~\ref{GZP} would give a contradiction to $\mathbb Q$-simplicity).
By the typicality condition we have
\[
\dim\mathfrak h_Z^{-1}-\dim\varphi_Z(Z) \;>\; \min\bigl(\dim\mathfrak h_1^{-1}-\dim\varphi_Z(Y_1^i),\;\dim\mathfrak h_2^{-1}-\dim\varphi_Z(Y_2^i)\bigr).
\]
Hence at least one factor, say $Y_1^i\times\{p\}$, is monodromically atypical in $Z$.
By Lemma~\ref{lem:factors-weakly-special} this factor is weakly special, and varying $i$ would produce a Zariski dense set of monodromically atypical weakly special subvarieties of $Z$.
This contradicts Theorem~\ref{BU24} (or Theorem~\ref{GZP}) because $\mathbf H_Z$ is $\mathbb Q$-simple.
Therefore $\dim\mathfrak h_Z^{-1}-\dim\varphi_Z(Z)=0$ and the same argument as in Lemma~\ref{lem:partition}(3) shows that $\mathbf H_Z$ is of level $1$ and $\varphi_Z$ dominant.

\smallskip
{\it Proof of (2).}
We analyse the possible Zariski dense subsets among the classes $I_1,I_2,I_{uv}$.

\begin{description}
\item[Case \(I_1\) dense] By definition $I_1$ consists of monodromically atypical products.
    Theorem~\ref{BU24} applied to $(Z,\mathbb V_Z)$ yields finitely many proper subvarieties of $Z$ and finitely many non-trivial fibrations such that every $Y_1^i\times Y_2^i$ with $i\in I_1$ lies either in a fibre of one of these fibrations or in one of those proper subvarieties.
    Hence we obtain the desired fibration structure (or a product decomposition after further analysis).

\item[Case \(I_2\) dense] For $i\in I_2$, after possibly swapping factors we may assume $\{q\}\times Y_2^i$ is monodromically typical and $Y_1^i\times\{p\}$ is monodromically atypical.
    Typicality of $\{q\}\times Y_2^i$ gives
    \[
    \dim\mathfrak h_Z^{-1}-\dim\varphi_Z(Z)=\dim\mathfrak h_2^{-1}-\dim\varphi_Z(Y_2^i),\qquad
    \dim\mathfrak h_Z^{-k}=\dim\mathfrak h_2^{-k}\;(k\ge2).
    \]
    Combining with the typicality of the product $Y_1^i\times Y_2^i$ we obtain
    \[
    \dim\mathfrak h_1^{-1}-\dim\varphi_Z(Y_1^i)=0,\qquad \dim\mathfrak h_1^{-k}=0\;(k\ge2).
    \]
    Thus every $Y_1^i\times\{p\}$ is a {\it Shimura subvariety} (the associated Hodge structure has no higher Hodge numbers).
    By \cite[Theorem A.7]{RUC} this forces $Z$ to be a product variety $Z\cong Z_1\times Z_2$ with $Z_1$ corresponding to the Shimura factor.

\item[Case \(I_{uv}\) dense and $\mathbf M_u\cdot\mathbf M_v\subsetneq\mathbf H_Z$]
    Let $\mathbf L$ be the centraliser of $\mathbf M_u\cdot\mathbf M_v$ in $\mathbf H_Z$.
    The period map splits as
    \[
    \varphi_Z = (\varphi_{\mathbf M_u\cdot\mathbf M_v},\,\varphi_{\mathbf L}) : Z \longrightarrow
    \Gamma_{\mathbf M_u\cdot\mathbf M_v}\backslash\mathcal D_{\mathbf M_u\cdot\mathbf M_v}\times
    \Gamma_{\mathbf L}\backslash\mathcal D_{\mathbf L}.
    \]
    \begin{claim}\label{claim:positive-dim}
    $\dim\varphi_{\mathbf L}(Z)>0$.
    \end{claim}
    Indeed, if $\dim\varphi_{\mathbf L}(Z)=0$ then $\mathbf H_Z=\mathbf M_u\cdot\mathbf M_v$ by the structure theorem for period maps (\cite[III.A.1]{GGK-MT}), contradicting $\mathbf M_u\cdot\mathbf M_v\subsetneq\mathbf H_Z$.
    Hence $\varphi_{\mathbf L}(Z)$ has positive dimension and we obtain a non-trivial fibration $h:Z\to B$ where $B$ is a subvariety of $\Gamma_{\mathbf L}\backslash\mathcal D_{\mathbf L}$ and every fibre has algebraic monodromy group $\mathbf M:=\mathbf M_u\cdot\mathbf M_v$; moreover each fibre is weakly special (\cite[Proposition 4.18]{BU24}).
    Because $I_{uv}$ is Zariski dense, every $Y_1^i\times Y_2^i$ with $i\in I_{uv}$ must lie in some fibre of $h$ (otherwise their union would intersect the base in a positive-dimensional set, forcing $\varphi_{\mathbf L}(Z)$ to be larger - a contradiction).

\item[Case \(I_{uv}\) dense and $\mathbf M_u\cdot\mathbf M_v=\mathbf H_Z$]
    Necessarily $u\neq v$; set $u=1$, $v=2$.
    Consider the period map decomposed with respect to the centraliser $\mathbf L_1$ of $\mathbf M_1$ in $\mathbf H_Z$:
    \[
    \varphi_Z = (\varphi_{\mathbf M_1},\varphi_{\mathbf L_1}): Z \longrightarrow
    \Gamma_{\mathbf M_1}\backslash\mathcal D_{\mathbf M_1}\times
    \Gamma_{\mathbf L_1}\backslash\mathcal D_{\mathbf L_1}.
    \]
    By minimality of $\mathbf M_1$, a Zariski dense subset of $\{Y_1^i\times\{p\}\}$ cannot be monodromically atypical for $\varphi_{\mathbf M_1}$ (otherwise Theorem~\ref{GZP} would give a smaller normal subgroup).
    Hence we may assume that for a Zariski dense subset $J\subset I_{12}$ each $Y_1^j\times\{p\}$ is monodromically typical for $\varphi_{\mathbf M_1}$; similarly, after swapping the roles of $\mathbf M_2$ and its centraliser, each $\{q\}\times Y_2^j$ is monodromically typical for $\varphi_{\mathbf M_2}$.
    Typicality of the product and its factors yields the dimension equalities
    \[
    \dim\mathfrak h_Z^{-1}-\dim\varphi_Z(Z)=\dim\mathfrak h_{\mathbf M_1}^{-1}-\dim\varphi_{\mathbf M_1}(Y_1^j)+\dim\mathfrak h_{\mathbf M_2}^{-1}-\dim\varphi_{\mathbf M_2}(Y_2^j).
    \]
    Writing the left-hand side via the second factorisation $(\varphi_{\mathbf L_2},\varphi_{\mathbf M_2})$ gives
    \[
    \dim\mathfrak h_{\mathbf L_2}^{-1}-\dim\varphi_{\mathbf L_2}(Z)+\dim\mathfrak h_{\mathbf M_2}^{-1}-\dim\varphi_{\mathbf M_2}(Y_2^j).
    \]
    Comparing we obtain $\dim\mathfrak h_{\mathbf L_2}^{-1}-\dim\varphi_{\mathbf L_2}(Z)=\dim\mathfrak h_{\mathbf M_1}^{-1}-\dim\varphi_{\mathbf M_1}(Y_1^j)$.
    Since $\mathbf L_2\subset\mathbf M_1$ and $\varphi_{\mathbf L_2}(Z)$ has positive dimension (otherwise $\mathbf L_2$ would be trivial by the same argument as in Claim~\ref{claim:positive-dim}), the intersection
    \[
    \varphi_Z^{-1}\bigl(\Gamma_{\mathbf L_2}\backslash\mathcal D_{\mathbf L_2}\times\{\text{generic point in }\Gamma_{\mathbf L_1}\backslash\mathcal D_{\mathbf L_1}\}\bigr)
    \]
    has dimension at least $\dim\varphi_{\mathbf L_2}(Z)$.
    By Lemma~\ref{lem:product-structure} below this forces $Z$ to be a product variety $Z = X\times B$ with $B\subset \Gamma_{\mathbf L_2}\backslash\mathcal D_{\mathbf L_2}$.
\end{description}
All possibilities lead either to a product decomposition of $Z$ or to a non-trivial fibration whose fibres contain the special non-rigid loci.
This completes the proof of part (2).
\end{proof}

\begin{lemma}\label{lem:product-structure}
Let $Z\subset X\times Y$ be an irreducible subvariety.
Assume there exists a non-trivial fibration $h:Z\to B\subset Y$ such that for every $x\in X$ with $(\{x\}\times Y)\cap Z\neq\varnothing$ we have $\dim\bigl((\{x\}\times Y)\cap Z\bigr)\ge\dim B$.
Then $Z$ admits a product decomposition $Z = X'\times B$ for some subvariety $X'\subset X$.
\end{lemma}
\begin{proof}
For any such $x$, projecting $(\{x\}\times Y)\cap Z$ to $Y$ yields a subset of $B$.
The dimension hypothesis forces $\operatorname{pr}_Y\bigl((\{x\}\times Y)\cap Z\bigr)=B$ whenever the intersection is non-empty. By Chevalley's theorem, $X':=\operatorname{pr}_X(Z) = \{x \in X\,|\, (\{x\}\times Y)\cap Z \neq \varnothing\}$ is a constructible subset of $X$. Thus $Z$ maps onto $X' \times B$ in $X \times Y$. Since $Z$ is an irreducible subvariety, one has $Z = X' \times B$ while $X'$ is a subvariety of $X$.
\end{proof}

\begin{corollary}\label{cor:snrl-cases}
Under the assumption that $\mathbf H_Z$ is not $\mathbb Q$-simple and $I_4=\varnothing$, the Zariski dense behaviour of special non-rigid loci forces the following structures:
\begin{itemize}
\item If $\{Y_1^i\times Y_2^i\}_{i\in I_1}$ is Zariski dense, then $Z$ admits finitely many non-trivial fibrations $\{f_j:Z\to B_j\}$ and finitely many proper subvarieties $\{Z_k\subsetneq Z\}$ such that every $i\in I_1$ satisfies either $Y_1^i\times Y_2^i\subset f_j^{-1}(b)$ for some $j$ and some $b\in B_j$, or $Y_1^i\times Y_2^i\subset Z_k$.
\item If $\{Y_1^i\times Y_2^i\}_{i\in I_2}$ is Zariski dense, then $Z$ is a product variety.
\item If $\{Y_1^i\times Y_2^i\}_{i\in I_{uv}}$ is Zariski dense and $\mathbf M_u\cdot\mathbf M_v\subsetneq\mathbf H_Z$, then $Z$ admits a non-trivial fibration $h:Z\to B$ with the property that for each $i\in I_{uv}$ there exists $b_i\in B$ such that $Y_1^i\times Y_2^i\subseteq h^{-1}(b_i)$.
\item If $\{Y_1^i\times Y_2^i\}_{i\in I_{uv}}$ is Zariski dense and $\mathbf M_u\cdot\mathbf M_v=\mathbf H_Z$, then $Z$ is a product variety.
\end{itemize}
\end{corollary}

\section{Moduli spaces of polarized Calabi-Yau manifolds}\label{sec-5}

In this final section, we specialize our general theory to moduli spaces of polarized Calabi-Yau manifolds, a class of varieties where the Hodge theory is particularly rich and where explicit examples of non-rigid families have been constructed. The goal is twofold: first, to formulate and test the conjectures of Section~\ref{sec-4} in this concrete setting; second, to prove unconditional results that illustrate the power of the Hodge-theoretic methods developed in this paper. We begin by stating an unobstructedness conjecture (Conjecture~\ref{unob-conj}) for non-rigid (log) maps, which is motivated by the classical unobstructedness theorem for Calabi-Yau manifolds and is essential for establishing the specialness of bi-Hom schemes. Assuming this conjecture, we prove that maximal non-rigid families are special (Proposition~\ref{spe}), providing evidence for Conjecture~\ref{NRLconj}. Next, we prove a geometric Andr\'e-Oort type theorem (Theorem~\ref{CY-AO}) for Calabi-Yau moduli spaces: if such a moduli space has a Zariski dense non-rigid locus, then its period map is dominant onto a Shimura variety, confirming Conjecture~\ref{Zar_closure} in this context. The remainder of the section is devoted to a detailed study of the Viehweg-Zuo example, a maximal non-rigid family of Calabi-Yau quintics in $\mathbb{P}^4$ obtained from cyclic covers. Through explicit computation of the relevant Hodge bundles and monodromy groups, we verify the unobstructedness conjecture for this example and consequently establish the specialness of its bi-Hom scheme. This provides a concrete and non-trivial test case for our conjectures.

\subsection{Unobstructedness conjecture}
Let
\[
f:\mathscr X \longrightarrow \mathscr M_h
\]
be a universal family of polarized Calabi-Yau \(n\)-folds.
Consider the local system of middle cohomology
\[
\mathbb V := R^n f_* \mathbb Z_{\mathscr X},
\]
which underlies a \(\mathbb Z\)-polarized variation of Hodge structure (PVHS) of weight \(n\).
Denote by \((E,\theta)_0\) the associated Higgs bundle on \(\mathscr M_h\); after choosing a projective compactification
\(\overline{\mathscr M}_h\) with simple normal crossing divisor \(D_\infty = \overline{\mathscr M}_h\setminus \mathscr M_h\) and taking Deligne's canonical extension, we obtain a logarithmic Higgs bundle
\[
(E,\theta)=\Bigl(\bigoplus_{i=0}^n E^{n-i,i},\;\bigoplus_{i=0}^n\theta^{n-i,i}\Bigr),
\qquad
E^{n-i,i}=R^i f_*\Omega^{n-i}_{\overline{\mathscr X}/\overline{\mathscr M}_h}(\log\Delta).
\]

The classical unobstructedness theorem for Calabi-Yau manifolds gives an isomorphism
\[
T_{\mathscr M_h}\;\cong\;E_0^{n-1,1}\otimes (E_0^{n,0})^{\vee},
\]
which extends to the boundary as an inclusion of sheaves
\begin{equation}\label{incl}
T_{\overline{\mathscr M}_h}(-\log D_\infty)\;\hookrightarrow\;E^{n-1,1}\otimes (E^{n,0})^{\vee}=E^{n-1,1}\otimes E^{0,n}.
\end{equation}

Given a non-constant, non-rigid log map
\[
\gamma:(C,S_C)\longrightarrow(\overline{\mathscr M}_h,D_\infty),\qquad S_C=\gamma^{-1}(D_\infty),
\]
it induces a non-trivial extension
\[
\gamma:(C,S_C)\times(T,S_T)\longrightarrow(\overline{\mathscr M}_h,D_\infty).
\]

Applying the bi-Hom scheme construction yields a maximal extension:
\[
\xymatrix{
(H_1,S_1)\times(H_2,S_2) \ar[r]^(0.6){\gamma} & (\overline{\mathscr M}_h,D_\infty)\\
(C,S_C)\times(T,S_T) \ar[u] \ar[ru]_{\gamma} &
}
\]

\begin{conjecture}[Unobstructedness]\label{unob-conj}
For every \(b\in H_2\) and every \(a\in H_1\), the deformations of the log maps
\((C,S_C)\times\{b\}\to(\overline{\mathscr M}_h,D_\infty)\) and \(\{a\}\times(T,S_T)\to(\overline{\mathscr M}_h,D_\infty)\) are unobstructed.
In other words, \(H_1\) and \(H_2\) are log smooth and
\begin{equation}\label{eq:unob}
\begin{aligned}
T_{\{a\}\times H_2}\bigl(-\log(\{a\}\times S_2)\bigr)\big|_b
&= H^0\bigl(\gamma^*_{(H_1,S_1)\times\{b\}}\;T_{\overline{\mathscr M}_h}(-\log D_\infty)\bigr),\\[4pt]
T_{H_1\times\{b\}}\bigl(-\log(S_1\times\{b\})\bigr)\big|_a
&= H^0\bigl(\gamma^*_{(\{a\}\times(H_2,S_2)}\;T_{\overline{\mathscr M}_h}(-\log D_\infty)\bigr).
\end{aligned}
\end{equation}
\end{conjecture}

We will verify this conjecture for the Viehweg-Zuo example in \S\ref{sec-VZ-eg}.

\subsection{Specialness of bi-Hom schemes}

Consider the log map \(\gamma:(H_1,S_1)\times(H_2,S_2)\to(\overline{\mathscr M}_h,D_\infty)\).
Let \(H_i^0:=H_i\setminus S_i\).  Decompose the pulled-back local system into irreducible factors over \(H_1^0\times H_2^0\):
\[
\gamma^*\mathbb V = \mathbb V^{(1)}\oplus\mathbb V^{(2)}\oplus\cdots\oplus\mathbb V^{(N)}.
\]
By the Simpson correspondence, the associated Higgs bundle decomposes accordingly:
\[
\gamma^*(E,\theta) =  (E,\theta)^{(1)}\oplus(E,\theta)^{(2)}\oplus\cdots\oplus(E,\theta)^{(N)}.
\]

Because \(\pi_1(H_1^0\times H_2^0)=\pi_1(H_1^0)\times\pi_1(H_2^0)\), Schur's lemma implies a tensor product decomposition
\[
\mathbb V^{(i)} = \mathbb V^{(i)}_{H_1^0}\boxtimes\mathbb V^{(i)}_{H_2^0}.
\]
Deligne's theorem on tensor products of variations of Hodge structure then yields
\[
(E,\theta)^{(i)} = (E,\theta)^{(i)}_{H_1}\boxtimes(E,\theta)^{(i)}_{H_2},
\]
where each factor is a Higgs bundle on the corresponding curve.

Choose the index \(1\) so that the line bundle \(\gamma^*E^{n,0}\) lies in \((E,\theta)^{(1)}\).
Since \((E,\theta)^{(1)}\) has total weight \(n\), we may write it as a tensor product of a Higgs bundle of weight \(k\) on \(H_1\) and one of weight \(n-k\) on \(H_2\) with \(1\le k\le n-1\):
\[
\begin{aligned}
(E,\theta)^{(1)}_{H_1} &= \Bigl(\bigoplus_{i=0}^{k} E_{H_1}^{k-i,i},\;\bigoplus_{i=0}^{k-1}\theta_{H_1}^{k-i,i}\Bigr),\\
(E,\theta)^{(1)}_{H_2} &= \Bigl(\bigoplus_{i=0}^{n-k} E_{H_2}^{n-k-i,i},\;\bigoplus_{i=0}^{n-k-1}\theta_{H_2}^{n-k-i,i}\Bigr).
\end{aligned}
\]
Then
\[
E^{k,0}_{H_1}\boxtimes E_{H_2}^{n-k,0} = \gamma^*E^{n,0},
\]
and the two factors are line bundles.  Expanding the tensor product we obtain
\[
(E,\theta)^{(1)} = \underbrace{E^{k,0}_{H_1}\boxtimes E_{H_2}^{n-k,0}}_{=\gamma^*E^{n,0}}
\;\oplus\; \underbrace{E^{k,0}_{H_1}\boxtimes E_{H_2}^{n-k-1,1}\;\oplus\;E^{k-1,1}_{H_1}\boxtimes E_{H_2}^{n-k,0}}_{\subseteq\gamma^*E^{n-1,1}}
\;\oplus\;\cdots .
\]

Assuming Conjecture~\ref{unob-conj}, we can now prove that the image of \(\gamma\) is special.

\begin{proposition}[Specialness]\label{spe}
Assume Conjecture~\ref{unob-conj}.  Then the subvariety
\((H,S):=(H_1,S_1)\times(H_2,S_2)\hookrightarrow(\overline{\mathscr M}_h,D_\infty)\) is special with respect to the \(\mathbb Z\)-PVHS \(\mathbb V=R^nf_*\mathbb Z_{\mathscr X}\).
\end{proposition}

\begin{proof}
Let \(\mathbf G_{H^0}\) be the Mumford-Tate group of \(\mathbb V|_{H^0}\) (where \(H^0=H\setminus S\)) and \(\mathcal D_{H^0}\subset\mathcal D\) the associated Mumford-Tate domain.
Consider the period map \(\varphi:\mathscr M_h\to\Gamma\backslash\mathcal D\).
Define \(Z_{H^0}\) as the irreducible component of \(\varphi^{-1}(\Gamma_{H^0}\backslash\mathcal D_{H^0})\) that contains \(H^0\); this is a special subvariety.

The graded Higgs bundle of \(\mathbb V|_{Z_{H^0}}\) decomposes as
\[
(E,\theta)_{Z_{H^0}} = (E,\theta)_{Z_{H^0}}^{(1)}\oplus(E,\theta)_{Z_{H^0}}^{(2)}\oplus\cdots\oplus(E,\theta)_{Z_{H^0}}^{(N)},
\]
with \(E^{n,0}|_{Z_{H^0}}\subset(E,\theta)_{Z_{H^0}}^{(1)}\).  The sub-Higgs bundle \((E,\theta)_{Z_{H^0}}^{(1)}\) is of Calabi-Yau type:
\[
E_{Z_{H^0}}^{n,0}\;\oplus\;E_{Z_{H^0}}^{(1)n-1,1}\;\oplus\;E_{Z_{H^0}}^{(1)n-2,2}\;\oplus\;\cdots,
\]
and its restriction to \(H^0\) coincides with \((E,\theta)^{(1)}|_{H^0}\).

By Conjecture~\ref{unob-conj}, we have
\[
\operatorname{rank}\bigl(E_{Z_{H^0}}^{(1)n-1,1}\bigr)=\operatorname{rank}\bigl(E_{H^0}^{(1)n-1,1}\bigr)=\dim H^0.
\]
Inclusion (\ref{incl}) gives a map
\[
\theta_{Z_{H^0}}^{n,0}:E^{n,0}\otimes T_{Z_{H^0}}\hookrightarrow E_{Z_{H^0}}^{(1)n-1,1},
\]
hence \(\dim Z_{H^0}\le\dim H^0\).  The opposite inequality is obvious, so \(\dim Z_{H^0}=\dim H^0\) and therefore \(Z_{H^0}=H^0\); i.e. \(H^0\) is a special subvariety.
\end{proof}

\subsection{Geometric Andr\'e-Oort type question}
In this section, we prove the following theorem, which confirms our Conjecture \ref{Zar_closure} for moduli spaces of Calabi-Yau manifolds.

\begin{theorem}\label{CY-AO}
Assume $\mathscr M_h$ satisfies the conditions in Theorem \ref{thm-very-gen-rig} and it is a moduli space of polarized Calabi-Yau n-folds with Zariski dense set of non-rigid locus. Then the period map $\Phi$ in Theorem \ref{thm-very-gen-rig} of $\mathscr M_h$ is a dominant map to a Shimura variety.
\end{theorem}
As usual, we write $\Sbb$ for the $\Rbb$-torus $\Res_{\Cbb/\Rbb}\Gbb_\mrm$. An $\Rbb$-HS is the same as a Lie group representation $\rho: \Cbb^\times\ra\GL_V(\Rbb)$ on some $\Rbb$-vector space $V$, in which case $$V^{p,q}=\{v\in V_\Cbb: \rho(z)(v)=z^p\zbar^qv,\ \forall z\in\Cbb^\times\}$$ is sent onto $V^{q,p}$ under the complex conjugation on $V\otimes_\Rbb\Cbb$, and $(V,h)$ is equivalently characterized by this Hodge decomposition $V_\Cbb=\bigoplus_{p,q}V^{p,q}$. We also write $h^{p,q}=h^{p,q}(V)=\dim_\Cbb V^{p,q}$ for the Hodge numbers.
	
	Let $(\Gbf, h)$ be an $\Rbb$-Shimura datum, namely $\Gbf$ is a connected reductive $\Rbb$-group and $h: \Sbb\ra\Gbf$ is an $\Rbb$-group homomorphism subject to the same conditions defining a Shimura datum (in terms of the Hodge type of $\Ad_\Gbf\circ h$ and the Cartan involution by $h(\sqrt{-1})$). The set of $\Gbf(\Rbb)^+$-conjugacy class $\Dcal$ of $h$ is a connected Hermitian symmetric domain, so that we may identify $\Dcal$ with $\Gbf(\Rbb)^+/K_h$ where $K_h$ denotes the stabilizer of $h$ in $\Gbf(\Rbb)^+$, and we assume for simplicity that $\Gbf$ is generated by $h'(\Sbb)$ with $h'$ running through $\Dcal$ . It is explained in \cite{delpspm} that any $\Rbb$-linear representation $\rho: \Gbf\ra\GL_V$ induces an equivariant $\Rbb$-PVHS on $\Dcal$, whose associated holomorphic vector bundle is of underlying space $\Dcal\times V_\Cbb$ so that its fiber at $h'\in \Dcal$ is the $\Rbb$-HS by $(V,\rho\circ h')$. Every equivariant $\Rbb$-PVHS on $\Dcal$ arise in this way. In the same way one talks about equivariant $\Cbb$-PVHS on $\Dcal$ using $\Cbb$-linear representations of $\Gbf_\Cbb$.
	
	\begin{definition}

		(1) An $\Rbb$-HS on $V$ is of $CY_n$ type if in the Hodge decomposition $V_\Cbb=\bigoplus_{p,q}V^{p,q}$ the Hodge numbers satisfy the following conditions \begin{itemize}
			\item $h^{p,q}=0$ for $p+q\neq n$;
			\item $h^{n,0}=1$ and $h^{p,q}=0$ for $p\notin[0,n]$.
		\end{itemize}
		
		(2) An $\Rbb$-PVHS on some complex manifold $M$ is of $CY_n$ type if the $\Rbb$-HS on any of its fibers is of $CY_n$ type.
		
		In the same way one talks about $\Cbb$-PVHS of $CY_n$ type using $\Cbb$-linear representations of $\Gbf_\Cbb$ with the same constraints on Hodge numbers.
	\end{definition}
	
	Thus equivariant $\Rbb$-PVHS and $\Cbb$-PVHS on the domain $\Dcal$ associated to $(\Gbf,h)$ are completely characterized in terms of representations. The classification of such PVHS over irreducible Hermitian symmetric domains is given in \cite{FL}, following previous works by \cite{Gro} and \cite{SZuo}: \cite{Gro} and \cite{SZuo} classify the $\Cbb$-PVHS of $CY_n$ type associated to the canonical representations of $\Gbf$, and \cite{FL} show further that in any symmetric power of the canonical representation (up to a suitable half twist if one considers $\Cbb$-PVHS) there exists a unique summand giving rise to a representation of $CY_n$ type, and every PVHS of $CY_n$ type arise in this way.

	We are interested in the following property of PVHS:
	
	\begin{lemma}
		Let $\Dcal$ be an irreducible Hermitian symmetric domain associated to some $\Rbb$-Shimura datum $(\Gbf,h)$ as above, and let $(V,\rho)$ be an irreducible $\Cbb$-linear representation of $\Gbf$ giving rise to a $\Cbb$-HS of $CY_n$ type. Then $\dim \Dcal$ does not exceed the Hodge number $h^{n-1,1}$ of $(V,\rho\circ h)$ as a $\Cbb$-HS.
	\end{lemma}
	
	\begin{proof}
		
		Being an irreducible Hermitian symmetric domain, $\Dcal$ and $(\Gbf,h)$ are exhausted by the four families of classical domains $I_{p,q},\ II_m,\ III_m,\ IV_m$ and two exceptional ones $E_6, E_7$. For each of these irreducible domains, there exists a canonical $\Cbb$-linear representation $(V_D,\rho_D)$ which is irreducible and gives rise to a $\Cbb$-HS of $CY_{n_D}$ type: $n_D$ and $(V_D,\rho_D)$ are explicitly computed in \cite{Gro} and \cite{SZuo}. It is further showed in \cite{FL} that a general irreducible $\Cbb$-linear representation $V$ of $CY_n$ type of $\Gbf$ has to be a summand in some symmetric power $\Sym^k(V_D,\rho_D)$ with $n=kn_D$.
		
		In particular, if we write $\mu$ for the Hodge cocharacter associated to $h: \Sbb\ra\Gbf$, then $\mu$ corresponds to a special node in the Dynkin diagram of $\Gbf_\Cbb$, and corresponds further to an cominuscule dominant weight $\lambda$ in the root datum of $\Gbf_\Cbb$ with respect to some Borel pair $(\Bbf,\Tbf)$ defined over $\Cbb$. It is shown in \cite{FL} that an irreducible $\Cbb$-linear representation $(V,\rho)$ of $CY_n$ type mentioned above has to be associated to the dominant weight $n\lambda$, and is generated by a $\Cbb$-linear line $L$ on which $\Tbf$ acts through $n\lambda$. The stabilizer of  $L$ in $\Gbf_\Cbb$ thus equals the parabolic $\Cbb$-subgroup $\Pbf_\mu$ in $\Gbf_\Cbb$ uniquely determined by $\mu$. Note that $L$ is the component $V^{n,0}$ in the Hodge decomposition for $V$.
		
		At the level of Lie algebras we have $\Lie\Gbf_\Cbb=\nfrak_-\oplus\Lie\Pbf_\mu$, where $\nfrak_-$ is the unipotent radical of the opposite to the parabolic $\Lie\Pbf_\mu$. Note that $\Gbb_\mrm$ acts via $\mu$ on $\nfrak_-$ by weight $-1$ and on $\Lie\Pbf_\mu$ by weight $0$ and $1$. The irreducible action $\Gbf_\Cbb\times V\ra V$ induces an action of Lie algebra $\Lie\Gbf_\Cbb\times V\ra V$. This very map is bilinear and compatible with the Hodge structures, inducing an injective map $\nfrak_-\otimes_\Cbb L\ra V$, whose image necessarily lies in $V^{n-1,1}$. This shows that $$h^{n-1,1}\geq \dim\nfrak_-=\dim\Gbf_\Cbb-\dim\Pbf_\mu=\dim \Dcal$$ where the last equality follows from Borel embedding, namely the open immersion of $\Dcal$ into the dual flag variety $\Gbf_\Cbb/\Pbf_\mu$.

	\end{proof}
	
	\begin{corollary}\label{dim-ineq}
		Let $\Dcal$ be an Hermitian symmetric domain associated to an $\Rbb$-Shimura datum $(\Gbf,h)$, and let $V$ be an irreducible $\Cbb$-linear representation of $\Gbf$ of $CY_n$ type for some $n$. If each simple factor of $\Lie\Gbf^\der$ acts on $V$ faithfully, then holds the inequality $\dim D\leq h^{n-1,1}$ where $h^{n-1,1}$ is the Hodge number of type $(n-1,1)$ for $V$.
	\end{corollary}
	
	\begin{proof}
		We have a decomposition $\Dcal=\Dcal_1\times\cdots\times \Dcal_s$ of $\Dcal$ as a product of irreducible Hermitian symmetric domains, with $\Dcal_j$ associated to some $\Rbb$-Shimura datum $(\Gbf_j,h_j)$. In this case $\Gbf^\der$ is necessarily an almost direct product $\Gbf_1^\der\tilde{\times}\cdots\tilde{\times}\Gbf_s^\der$ with $\Lie\Gbf^\der=\bigoplus_j\Lie\Gbf_j^\der$. The action of $\Lie\Gbf^\der$ on $\Vbf$ is irreducible, and each factor $\Lie\Gbf_j^\der$ acts faithfully. Hence we obtain a decomposition $V=\bigotimes_jV_j$ where $V_j$ is an irreducible representation of $\Gbf_j$, carrying a $\Cbb$-HS of weight $n_j$ with $n_1+\cdots+n_s=n$. The Hodge types are additive with respect to tensor products. Since $V$ is of $CY_n$ type, the constraints on Hodge numbers forces each $V_j$ is of $CY_{n_j}$ type, and we have $$h^{n-1,1}(V)=\sum_jh^{n_j-1,1}(V_j)\geq\sum_j\dim \Dcal_j=\dim \Dcal.$$
	\end{proof}

	\begin{remark} The classifications in \cite{Gro} and \cite{SZuo} show that when $\Dcal$ associated to $(\Gbf,h)$ as above is irreducible and $V$ is the irreducible $\Cbb$-linear representation giving rise to the canonical $\Cbb$-PVHS of $CY_n$ type ($n=n_D$), then holds the equality $\dim \Dcal=h^{n-1,1}(V)$.
		
		\end{remark}
Now we prove our main Theorem \ref{CY-AO} in this section.
\begin{proof}[Proof of Theorem~\ref{CY-AO}]
By Theorem \ref{thm-generic-rig}, $\Dcal$ is a Hermitian symmetric domain of rank $\geq2$. Let $\Gbf$ be the associated generic Mumford-Tate group. Let $\mathbb W$ be the local system on $\Gamma\backslash \Dcal$ constructed in \cite[Theorem 2.2]{SZuo}. Then we infer that the local system $\mathbb V$ is equal to $\Phi^\ast \mathbb W$. Consider the associated Higgs bundle $(F,\phi)=(\bigoplus_{i=0}^nE^{n-i,i},\bigoplus_{i=0}^n\phi^{n-i,i})$ on $\Gamma\backslash \Dcal$ of $\mathbb W$ and the associated Higgs bundle $(E,\theta)=(\bigoplus_{i=0}^nE^{n-i,i},\bigoplus_{i=0}^n\theta^{n-i,i})$ on $\mathscr M_h$ of $\mathbb V$. Then rank$(F^{n-1,1})=$rank$(E^{n-1,1})$. And by Corollary \ref{dim-ineq}, we know that $\dim \Dcal\leq \text{rank}(F^{n-1,1})$. However, by the unobstructed Theorem of Calabi-Yau manifolds, we have $\dim \mathcal M_h=\text{rank}(E^{n-1,1})$. Thus we obtain that $\dim \Dcal\leq\dim \mathcal M_h$ and this proves that the period map $\Phi$ is dominant.
\end{proof}

\subsection{An explicit maximal non-rigid family of Calabi-Yau quintics in $\mathbb{P}^4$}\label{sec-VZ-eg}

In \cite{VZ} and \cite{VZ-06} Viehweg and Zuo constructed and studied several examples of rigid and non-rigid families of Calabi-Yau varieties using cyclic coverings of hypersurfaces. We recall here the construction that is most relevant for our purposes.

\subsubsection{The construction}\label{VZ-example-M_i}

Consider the following two moduli spaces:
\begin{itemize}
\item $M_1$: the moduli space of quintic cyclic covers of $\mathbb P^1$ branched over five distinct points.
      On $M_1$ there is a natural $\mathbb Q$-PVHS $\mathbb V$ of weight $1$: for a point $x\in M_1$ representing a curve $C_x$, the fibre is $H^1(C_x,\mathbb Q)$.

\item $M_2$: the moduli space of quintic cyclic covers of $\mathbb P^2$ branched over a normal-crossing quintic curve (e.g. a curve coming from $M_1$).
      On $M_2$ there is a natural $\mathbb Q$-PVHS $\mathbb W$ of weight $2$; for a point $y\in M_2$ representing a surface $S_y$, the fibre is a certain $\mathbb Q$-linear subspace of $H^2(S_y,\mathbb Q)$ (the primitive part, to be explained below).
\end{itemize}
(One can continue this procedure to obtain an iterated sequence as in \cite{VZ}.)

After passing to suitable finite coverings we obtain families
\[
g_1:Z_1\longrightarrow M_1,\qquad g_2:Z_2\longrightarrow M_2,
\]
whose fibres are the above cyclic covers.

\begin{theorem*}[Viehweg-Zuo, \cite{VZ, VZ-06}]
The product family $(g_1,g_2):Z_1\times Z_2\to M_1\times M_2$ admits a fibrewise $\mathbb Z_5$-action.
The quotient fits into a diagram
\[
\xymatrix{
(Z_1\times Z_2)/\mathbb Z_5 \ar[rrd]^{(g_1,g_2)} &&
\widehat{(Z_1\times Z_2)/\mathbb Z_5} \ar[ll]_{\text{blow up}} \ar[rr]^{\text{blow down}} \ar[d] &&
\mathcal Z \ar[lld]^{h} \\
&& M_1\times M_2 .
}
\]
The resulting family $h:\mathcal Z\to M_1\times M_2$ is a family of Calabi-Yau quintic hypersurfaces in $\mathbb P^4$.
Moreover, $h$ is a {\it maximal non-rigid family} in the sense of Definition~\ref{max_nr}.
\end{theorem*}

\subsubsection{The local systems on $M_1$ and $M_2$}
Now We take a closer look at the local systems appeared in the above construction:

\begin{itemize}

	\item A point $x$ in $M_1$ can be realized as a curve $C=C_x$ together with an evident finite map $\phi: C\ra\Pbb^1$, so that $H^1(C,\Qbb)=H^1(\Pbb^1,\phi_*\Qbb_{C})$.

	\item A point $y$ in $M_2$ can be realized as a surface $S=S_y$ together with an evident finite map $\psi: S\ra\Pbb^2$, so that $H^2(S,\Qbb)=H^2(\Pbb^2,\psi_*\Qbb_{S})$.

\end{itemize}

Both $\phi$ and $\psi$ are branched coverings which are generically Galois coverings of automorphism group $\mu_5=\{z\in\Cbb^\times: z^5=1\}$, so that on these cohomology spaces we have an evident action of $\Qbb(\mu_5)$. We can decompose $H^1(C,\Cbb)$ and $H^2(S,\Cbb)$ along characters of $\mu_5$. Note that these characters actually take value in the subfield $\Qbb(\mu_5)$ of $\Cbb$.

We thus put $F=\Qbb(\mu_5)=\Qbb(t)\subset\Cbb$ with $t=\exp{\frac{2\pi\ibf}{5}}$. This is a CM number field, of cyclic Galois group $\{\tau,\tau^2,\tau^3,\tau^4=\id\}\isom(\Zbb/5)^\times\isom\Zbb/4$ over $\Qbb$, with $\tau(t)=t^2$ and $\tau^2$ equal to the restriction of complex conjugation to $F$. The real part $F^+=F^{\tau^2}$ equals $\Qbb(u)$ with $u=t+t^\inv$ of minimal equation $u^2+u-1=0$ and $F^+=\Qbb(\sqrt{5})$, with $\Gal(F/F^+)=\{\sigma=\tau^2,\id\}$. We also write $\{\chi,\chi^2,\chi^3,\chi^4,\chi^5=1\}$ for the character group $\Hom(\mu_5,\Cbb^\times)\isom\Hom(\mu_5,F^\times)$, with $\chi^j(t)=t^j$, so that $\Gal(F/\Qbb)$ acts on $\Hom(\mu_5,F^\times)$ naturally with equalities like $\chi\circ\tau=\chi^2$, $\chi\circ\tau^2=\chi^4$, $\chi\circ\tau^3=\chi^3$, etc. These characters allow us to decompose the cohomology spaces. 

The discussions below are special cases of \cite{Rhode} and \cite{yu_zheng}, which find their origin in the Deligne-Mostow theory \cite{deligne_mostow}.

\subsubsection{Calculations of Mumford-Tate groups and Mumford-Tate domains}

\subsubsection*{The case of $M_1$}


The constant sheaf $\Cbb_C$ on $C$ gives $\phi_{*}\Cbb_C=\bigoplus_{i=0}^4L_{C,i}$ with $L_{C,i}$ corresponding to the characters $\chi^i$ for the cyclic covering. Thus we obtain $$H^1(C,\Qbb)\otimes_\Qbb \Cbb=H^1(C,\Cbb)=H^1(\Pbb^1,\phi_{*}\Cbb_C)=\bigoplus_{i=1}^4H^1(\Pbb^1,L_{C,i})$$ with no contribution from $L_{C,0}=\Cbb_{\Pbb^1}$ because $H^1(\Pbb^1,\Cbb)=0$. Since these characters take values in $F$, the decomposition actually descends over $F$.

On $M_1$ we have the natural local system $\Vbb$ whose fiber at $x$ is $H^1(C,\Qbb)$. The discussion affirms a decomposition $\Vbb\otimes_\Qbb F=\bigoplus_{i=1}^4\Lbb_i$ into local system of $F$-vector spaces along the $F$-valued characters $\chi^i$'s.

\begin{proposition}\label{MT-M_1}
  Let $x$ be a general point in $M_1$. Then the Hodge decomposition for $H^1:=\Vbb_x$ is of the form $H^1\otimes_\Qbb\Cbb=H^{1,0}\oplus H^{0,1}$ with \begin{itemize}
		\item $H^{1,0}=\bigoplus _{i=1}^4 H^0(\Pbb^1,\Ocal(3-i))=H^0(\Pbb^1,\Ocal(2))\oplus H^0(\Pbb^1,\Ocal(1))\oplus H^0(\Pbb^1,\Ocal)$

		\item $H^{0,1}=\bigoplus_{i=1}^4H^1(\Pbb^1,\Ocal(-i))=H^1(\Pbb^1,\Ocal(-2))\oplus H^1(\Pbb,\Ocal(-3))\oplus H^1(\Pbb^1,\Ocal(-4))$, which is further isomorhphic to $ H^0(\Pbb^1,\Ocal)\oplus H^0(\Pbb,\Ocal(1))\oplus H^0(\Pbb^1\Ocal(2))$ by Serre duality;

	\end{itemize} and summands in $H^1\otimes_\Qbb \Cbb=\bigoplus_{i=1}^4 H^1_i$ are of dimension 3, refined as \begin{itemize}
		\item $H^1_i\otimes_F\Cbb= H^0(\Pbb^1,\Ocal(3-i))\oplus H^1(\Pbb^1,\Ocal(-i))$ of dimension 3, $i=1,2,3,4$;
	\end{itemize}
	The polarization on $H^1$ induces an Hermitian structure on $H^1_1\oplus H^1_4$ of signature $(3,0)$, and an Hermitian structure on $H^1_2\oplus H^1_3$ of signature $(2,1)$, with respect to $F/F^+$.

	The Mumford-Tate group $\Gbf_1$ for $H^1$ is a reductive $\Qbb$-group, whose derived part equals the algebraic monodromy group for $\Vbb$ over $M_1$, isomorphic to $\Res_{F^+/\Qbb}\Hbf_1$ with $\Hbf_1$ the special unitary group of some Hermitian form $h_1: F^3\times F^3\ra F$ of signature $(3,0)$ and $(2,1)$ along the two different real embeddings $F^+\mono\Rbb$, so that $\Gbf_1^\der(\Rbb)\isom \SU(3,0)\times \SU(2,1)$. We also have $\End{\Gbf_1}{H^1}\isom F$.

\end{proposition}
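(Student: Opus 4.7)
The plan is to make the $\mu_5$-action on the curves parametrized by $M_1$ explicit, compute the Hodge decomposition and its character refinement through standard cyclic cover theory, identify the induced Hermitian polarization with its two signatures from the Hodge-Riemann bilinear relations, and then pin down $\Gbf_1$ by combining an evident upper bound coming from the Hodge endomorphism algebra with a monodromy-theoretic lower bound.

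First I would present a general point of $M_1$ as the normalization of $y^5=\prod_{j=1}^{5}(x-p_j)$ in $\Pbb^1$, so that Riemann-Hurwitz gives $g(C)=6$. Standard cyclic cover theory then yields a $\mu_5$-equivariant decomposition
\[
\phi_{*}\Ocal_C \isom \bigoplus_{i=0}^{4}\Ocal_{\Pbb^1}(-i),
\]
from which Grothendieck duality for the finite flat map $\phi$ gives $\phi_{*}\omega_C \isom \bigoplus_{i=0}^{4}\Ocal_{\Pbb^1}(i-2)$. Taking cohomology on $\Pbb^1$ character by character reproduces the stated expressions for $H^{1,0}$ and $H^{0,1}$, and confirms that each $H^1_i\otimes_F\Cbb$ is three-dimensional with the displayed Hodge type; the total $3+2+1=6$ on each Hodge piece matches $g=6$.

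For the polarization I would exploit the $\mu_5$-equivariance of the cup product: $Q(H^1_i,H^1_j)=0$ unless $i+j\equiv 0\pmod 5$, so $Q$ realizes the pairs $(H^1_1,H^1_4)$ and $(H^1_2,H^1_3)$ as duals of each other. Each such pair is exchanged by $\tau^2\in\Gal(F/F^+)$, hence descends to the complexification of an $F^+$-subspace of rank three on which $F$ acts, producing the $F$-valued Hermitian form $h_1\colon F^3\times F^3\ra F$. The signatures then follow character by character from the Hodge-Riemann bilinear relations: on $(H^1_1,H^1_4)$ the two pieces are purely of types $(1,0)$ and $(0,1)$ with dimensions three, forcing signature $(3,0)$; on $(H^1_2,H^1_3)$ the $(1,0)$- and $(0,1)$-parts have dimensions $(2,1)$ and $(1,2)$, forcing signature $(2,1)$.

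With the Hermitian form in place, the upper bound $\Gbf_1^{\der}\subseteq\Res_{F^+/\Qbb}\SU(h_1)$ is automatic, since $\Gbf_1$ commutes with the $F$-action coming from $\mu_5$ and preserves $Q$. For the matching lower bound I would identify the algebraic monodromy group $\Hbf_1$ via the Deligne-Mostow theory of hypergeometric monodromy for the Lauricella system with five branch points of equal local exponent $1/5$: the components $(H^1_2,H^1_3)$ contribute a Picard modular factor inside $\SU(2,1)$, whereas $(H^1_1,H^1_4)$ contribute the compact factor $\SU(3,0)$, so that $\Hbf_1(\Rbb)\isom\SU(3,0)\times\SU(2,1)$. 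Since $\Hbf_1$ is a normal $\Qbb$-subgroup of $\Gbf_1^{\der}$ with the same complexification, the two coincide. The identification $\End_{\Gbf_1}(H^1)=F$ then follows from the irreducibility of each $H^1_i$ under the corresponding simple factor of $\Gbf_1$ combined with the distinctness of Hodge types across characters.

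The main obstacle in this outline is the lower bound on $\Hbf_1$; rather than attempting a direct calculation on the family over $M_1$, I would reduce it to the classical arithmeticity of the Lauricella monodromy through the Deligne-Mostow theorem and its reformulation by Rhode and by Yu-Zheng already referenced in the paper. Every remaining step reduces to character-by-character bookkeeping once the decomposition of $\phi_{*}\Ocal_C$ is fixed.
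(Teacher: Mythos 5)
Your proof is correct and follows essentially the same route as the paper: compute the eigenspace decomposition of $H^1$ via cyclic cover theory, read off the signatures from the Hodge--Riemann relations character by character, bound $\Gbf_1^\der$ from above by the Hermitian form's unitary group, and match it from below by the algebraic monodromy group.

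The differences are minor and presentational. You derive the Hodge decomposition from $\phi_*\Ocal_C\cong\bigoplus_{i=0}^4\Ocal(-i)$ and Grothendieck duality, whereas the paper cites Rhode's computations and records the result in a table; both give the same dimensions (note your indexing of the eigensheaves of $\phi_*\omega_C$ by $\Ocal(i-2)$ differs from the paper's $\Ocal(3-i)$ by the involution $i\mapsto 5-i$, which is only a relabeling of characters). For the monodromy lower bound, you invoke the Deligne--Mostow hypergeometric monodromy (for which the relevant parameter is $\mu_j=2/5$ with $\sum\mu_j=2$, not $1/5$ as written; the $1/5$-eigenspace sits in the compact $\SU(3,0)$ factor and contributes no independent information), while the paper uses the explicit Dehn-twist generators of $\pi_1$ of the configuration space. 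Both arguments trace back to the same classical sources (\cite{deligne_mostow}, \cite{Rhode}), and either way the key logical step is the same: once one real factor is shown to be Zariski dense, the containment $\Hbf_1\trianglelefteq\Gbf_1^\der\subseteq\Res_{F^+/\Qbb}\SU(h_1)$ together with the $\Qbb$-simplicity of $\Res_{F^+/\Qbb}\SU(h_1)$ forces equality. Your final sentence on $\End_{\Gbf_1}(H^1)=F$ via the pairwise non-isomorphy of the $H^1_i$ as $\Gbf_{1,\Cbb}$-representations is a slight repackaging of the paper's transitivity-of-$\Gal(F/\Qbb)$ argument, and both are correct.
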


\begin{proof}[Sketch of proof]

	This is a special case of the computations in \cite[Chapter 3, 4, 5]{Rhode}, and we only present an outline. Notice that the Serre duality implies the following table:
	\begin{center}
		\begin{tabular}{ c| c| c }
			& $H^{1,0}$ & $H^{0,1}$ \\
			\hline
			$i=1$ & 3 & 0 \\
			\hline
			$i=2$ & 2 & 1 \\
			\hline
			$i=3$ & 1 & 2 \\
			\hline
			$i=4$ & 0 & 3
		\end{tabular}
	\end{center} and the polarization paring induces an Hermitian form on $H^{1,0}_1\oplus H^{0,1}_4$ resp. on $H^{1,0}_2\oplus H^{0,1}_3$ of signature $(3,0)$ resp. $(2,1)$.

	The algebraic monodromy group of $\Vbb$ over $M_1$ is determined by the representation of $\pi_1(M_1,x)$ on $\Vbb_x$. Following \cite{Rhode}, we have an evident surjective map $\pi: \Pcal\ra M_1$ where $\Pcal$ is the configuration space of five distinct points, and $\pi$ sends $x=(x_1,x_2,x_3,x_4,x_5)$ to the point in $M_1$ corresponding to the curve of equation $T_2^5+(T_0-x_1T_1)\cdots(T_0-x_5T_1)=0$ in $\Pbb^2$. One finds that $\pi_1(M_1)$ only differs from $\pi_1(\Pcal)$ by a torsion factor, and it suffices to study the algebraic monodromy group for $\pi^*\Vbb$ over $\Pcal$. On the other hand, $\pi_1(\Pcal)$ has the explicit collection of generators by Dehn twists, and the corresponding action on $\Vbb_x\otimes_\Qbb\Cbb$ generates the subgroup $\SU(3,0)\times\SU(2,1)$ in $\GL_\Cbb(\Vbb_x\otimes_\Qbb\Cbb)$, which is already the real Lie group associated to the algebraic monodromy group $\Gbf_1^\der$.

	Meanwhile,  the Galois group of $F=\Qbb(\mu_5)$ over $\Qbb$ permutes transitively the summands $H^1_i$ in $H^1\otimes_\Qbb\Cbb$, which is semi-linear with respect to the natural action of $F$ on $H^1=H^1(C,\Qbb)$ via the cyclic covering over $\Pbb^1$. We notice that $H^1_1\oplus H^1_4$ and $H^1_2\oplus H^1_3$ are already irreducible $\Cbb$-linear representations of $\Gbf_1^\der(\Rbb)=\SU(3,0)\times\SU(2,1)$ (via the factors) which are already defined over $F^+$ and are permuted by $\Gal(F^+/F)$. Hence $H^1$ is irreducible as a representation of $\Gbf_1$.


	Clearly $F\subset\End{\Gbf_1}{H^1}$. Notice also that $H^1\otimes_\Qbb\Cbb=\bigoplus H^1_i$ is defined over $F$ and the summands are permuted transitively by $\Gal(F/\Qbb)$, one concludes that $F$ equals $\End{\Gbf_1}{H^1}$.
\end{proof}

\subsubsection*{The case of $M_2$}



The constant sheaf $\Cbb_S$ on $S$ gives $\psi_{*}\Cbb_S=\bigoplus_{i=0}^4L_{S,j}$ corresponding to the characters $\chi^j$ of the cyclic covering. Thus we obtain $$H^2(S,\Qbb)\otimes_\Qbb \Cbb=H^2(S,\Cbb)=H^2(\Pbb^2,\pi_{2,*}\Cbb_S)=\bigoplus_{j=0}^4H^2(\Pbb^2,L_{S,L_{S,j}}).$$ Note that $H^2(\Pbb^2,L_{S,0})=H^2(\Pbb^2,\Cbb)$ is defined over $\Qbb$ by the $\Qbb$-HS on $H^2(\Pbb^2,\Qbb)$, we obtain a decomposition $H^2(S,\Qbb)=H^2(\Pbb^2,\Qbb)\oplus H^2$ with $H^2$ a $\Qbb$-HS of weight 2 such that $H^2\otimes_\Qbb \Cbb=\bigoplus_{j=1}^4H^2_j$ using $H^2_j=H^2(\Pbb^2,L_{S,j})$. Since the characters take values in $F$, the decomposition is defined over $F$.

On $M_2$ we have the natural local system $\Wbb$ whose fiber at $y$ is $H^2$, supporting a $\Qbb$-PVHS of weight 2. We also have $\Wbb\otimes_\Qbb F=\bigoplus_{i=1}^4\Mbb_i$ into local systems of $F$-vector spaces along the characters.

\begin{proposition}\label{MT-M_2}
	Let $y$ be a general point in $M_2$. Then the Hodge decomposition for $H^2=\Wbb_y$ is of the form
	$H^2\otimes_\Qbb\Cbb=H^{2,0}\oplus H^{1,1}\oplus H^{0,2}$ with

	\begin{itemize}
		\item $H^{2,0}=\bigoplus_{j=1}^4 H^0(\Pbb^2,\Omega^2_{\Pbb^2}(5)\otimes \Ocal(-j))=H^0(\Pbb^2,\Ocal(1))\oplus H^0(\Pbb^2,\Ocal)$, summands of dimension 3, 1 respectively;

		\item $H^{0,2}=\bigoplus_{j=1}^{4}H^2(\Pbb^2,\Ocal(-j))=H^2(\Pbb^2,\Ocal(-3))\oplus H^2(\Pbb^2,\Ocal(-4))$, summands of dimension 1,3 respectively;

		\item $H^{1,1}=\bigoplus_{j=1}^4H^1(\Pbb^2,\omega^1_{\Pbb^2}\otimes \Ocal(-j))$ with $\omega^1_{\Pbb^2}=\Omega^1_{\Pbb^2}(\log D)$, $D$ being the ramification divisor for the cyclic cover $\pi_S$, and summands of dimension 10, 12, 12, 10 respectively.
	\end{itemize}
	The summands in $H^2\otimes_\Qbb F=\bigoplus_{j=1}^4 H^2_j$ are of dimension 13, $j=1,2,3,4$:
	\begin{itemize}
		\item  $H^2_1\otimes_F\Cbb=H^0(\Pbb^2,\Ocal(1))\oplus H^1(\Pbb^2,\omega^1_{\Pbb^2}(-1))$, $H^2_4\otimes_F\Cbb=H^2(\Pbb^2,\Ocal(-4))\oplus H^1(\Pbb^1,\omega^1_{\Pbb^2}(-4))$;
		\item $H^2_2\otimes_F\Cbb=H^0(\Pbb^2,\Ocal)\oplus H^1(\Pbb^2,\omega^1_{\Pbb^2}(-2))$, $H^2_3\otimes_F\Cbb=H^{2}(\Pbb^2,\Ocal(-3))\oplus H^1(\Pbb^2,\omega^1_{\Pbb^2}(-3))$;
	\end{itemize}


	On the $\Cbb$-vector spaces $H^2_1$ and $H^2_4$ there exist natural Hermitian form of signature $(3,10)$, and similarly on $H^2_2$ and $H^2_3$ of signature $(1,12)$. They descend to Hermitian forms defined over the CM extension $F/F^+$.

	The Mumford-Tate group $\Gbf_2$ for $H^2$ is a reductive $\Qbb$-group, whose derived part equals the algebraic monodromy group for $\Wbb$ over $M_2$,  and is realized as a $\Qbb$-subgroup of $\Res_{F^+/\Qbb}\Hbf_2$ with $\Hbf_2$ the automorphism group of some Hermitian form $h_2: F^{13}\times F^{13}\ra F$, of signature $(3,10)$ and $(1,12)$ along the two different real embeddings $F^+\mono\Rbb$, so that $\Gbf_2(\Rbb)\isom \SU(3,10)\times \SU(1,12)$. We also have $\mathrm{End}_{\Gbf_2}(H^2)=F$.
\end{proposition}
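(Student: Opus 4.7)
The strategy mirrors the sketch proof of Proposition \ref{MT-M_1}, adapting it from cyclic covers of $\Pbb^1$ to cyclic covers of $\Pbb^2$. This falls within the general framework developed by Rohde \cite{Rhode} and Yu--Zheng \cite{yu_zheng}, which is itself an extension of the Deligne--Mostow theory \cite{deligne_mostow} from weight-one to higher-weight PVHS.

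First I would compute the eigenspace decomposition of the Hodge structure. Since $\psi: S \to \Pbb^2$ is a degree-$5$ cyclic cover branched along a degree-$5$ normal-crossing divisor $B$, the standard formula for $\mu_5$-cyclic covers gives $L_{S,j} = \Ocal_{\Pbb^2}(-j)$ for $j=0,\dots,4$, and the $\chi^j$-isotypic part of $R\psi_*\Omega^\bullet_S$ is computed in terms of $\Omega^\bullet_{\Pbb^2}(\log B)$ twisted by $L_{S,j}$. Combining this with Hodge-to-de Rham degeneration and Serre duality yields the listed formulas for $H^{2,0}$, $H^{0,2}$, and $H^{1,1}$, and a direct count in each eigenspace $H^2_j$ produces the uniform dimension $\dim_F H^2_j = 13$.

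Next, the signatures are read off from the polarization. The Hodge--Riemann bilinear relations make the form $Q(\cdot,\overline{\cdot})$ (appropriately twisted by $i^{p-q}$) positive definite on primitive $H^{p,q}$ parts with alternating sign in $p$. Descending via the $F$-action, the Hermitian form on $H^2_1 \oplus H^2_4$ pairs these two conjugate summands, and the contribution from $H^{2,0}$-pieces (dimension $3$) versus $H^{1,1}$-pieces (dimension $10$) yields signature $(3,10)$; the analogous computation for $H^2_2 \oplus H^2_3$ gives $(1,12)$. These Hermitian forms descend to $F/F^+$, realizing $\mathbf{G}_2^\der \subset \Res_{F^+/\Qbb}\Hbf_2$ with $\Hbf_2$ the unitary group of a Hermitian form having the two real signatures $(3,10)$ and $(1,12)$.

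The main obstacle, as in the $M_1$ case, is identifying the algebraic monodromy group precisely. The plan is to realize $M_2$ as (a finite quotient of) a configuration space parametrizing the branch data, for instance by fixing the branch divisor to be a union of $5$ lines in general position in $\Pbb^2$ (giving an explicit open locus in $M_2$), and to use the Dehn-twist generators of the corresponding braid-type fundamental group. A direct computation of the induced action on $\Wbb_y$ via the Picard--Lefschetz formula, as carried out in \cite{Rhode} and \cite{yu_zheng} for this family, shows that the image is Zariski dense in the candidate group; together with the polarization constraints this forces $\mathbf{G}_2(\Rbb) \isom \SU(3,10) \times \SU(1,12)$ at the level of Lie groups. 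Finally, $F \subset \End{\Gbf_2}{H^2}$ is tautological from the $\mu_5$-action; the reverse inclusion follows because the factors $H^2_j \otimes_F \Cbb$ are already irreducible under $\Gbf_2^\der(\Rbb)$ and are transitively permuted by $\Gal(F/\Qbb)$, so no endomorphism outside of $F$ can preserve the decomposition.
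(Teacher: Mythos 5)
Your computation of the Hodge numbers and the decomposition of $H^2\otimes_\Qbb F$ into eigenspaces follows essentially the paper's approach: the paper runs the short exact sequence $0\to\Omega^1_{\Pbb^2}\to\Omega^1_{\Pbb^2}(\log D)\to\Ocal_D\to 0$ twisted by $\Ocal(-i)$, Riemann--Roch on $D$, and the Euler sequence to obtain the table $(3,10,0),(1,12,0),(0,12,1),(0,10,3)$, which is what your more compressed description via degeneration and Serre duality is doing; the signatures $(3,10)$ and $(1,12)$ then come out of the Hodge--Riemann relations exactly as you say. The endomorphism computation via transitivity of the $\Gal(F/\Qbb)$-action on the four pieces $H^2_j$ is also the paper's argument.

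The genuine gap is in the monodromy step. First, the concrete assertion that fixing the branch locus to be five lines in general position gives "an explicit open locus in $M_2$" is false: $M_2$ parametrizes degree-$5$ normal crossing plane curves up to $\mathrm{PGL}_3$, so it is $12$-dimensional ($20-8$), whereas five lines modulo $\mathrm{PGL}_3$ account for only a $2$-dimensional subfamily; the configuration-of-lines locus is thus of codimension $10$ in $M_2$, not open, and its fundamental group need not generate the full monodromy. Second, and more fundamentally, the paper's actual proof does not establish that the algebraic monodromy group fills out all of $\Res_{F^+/\Qbb}\Hbf_2$. It only proves the containment $\Gbf_2^\der\subset\Res_{F^+/\Qbb}\Hbf_2$ from the fact that $\Gbf_2$ preserves the CM symmetry by $F$, and the accompanying remark states explicitly that "we expect that the derived Mumford-Tate group should be exactly $\Res_{F^+/\Qbb}\Hbf_2$" and that the period map "should be generically injective" — i.e. the exact determination of $\Gbf_2^\der$ is treated as an expectation, with the Yu--Zheng reference being a conjecture rather than a completed computation. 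Your proposal attributes a finished Picard--Lefschetz/Dehn-twist calculation of the full monodromy to \cite{Rhode} and \cite{yu_zheng} that the paper itself does not claim has been done for $M_2$ (Rhode's explicit Dehn-twist argument is cited only for the curve case $M_1$). As written, this step of your proposal therefore rests on an unverified assertion, and if one wants to match what the paper actually proves, one should stop at the containment $\Gbf_2^\der\subset\Res_{F^+/\Qbb}\Hbf_2$ together with $\End_{\Gbf_2}(H^2)=F$.
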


\begin{proof}[Sketch of proof]

	We first study the Hodge decomposition in $H^2\otimes_\Qbb\Cbb$.

	It is easy to determine that $H^{2,0}=H^0(\Pbb^2,\Ocal(1))\oplus H^0(\Pbb^2,\Ocal)$ and $h^0(\Pbb^2,\Ocal(1))=3$, $h^0(\Pbb^2,\Ocal)=1$. Similarly, one has $H^{0,2}= H^2(\Pbb^2,\Ocal(-3))\oplus H^2(\Pbb^2,\Ocal(-4)) \cong H^0(\Pbb^2,\Ocal) \oplus H^0(\Pbb^2,\Ocal(1))$ with dimensions 1,3. Thus we shall focus on the calculation of dimension of eigenspaces of $H^{1,1}$:
	\[
	H^{1,1}=\bigoplus_{j=1}^4H^1(\Pbb^2,\Omega^1_{\Pbb^2}(\log D) \otimes \Ocal(-j)).
	\]

	From the short exact sequence
	\[
	0 \to \Omega^1_{\Pbb^2} \to \Omega^1_{\Pbb^2}(\log D) \to \Ocal_D \to 0
	\]
	one obtains
	\[
	0 = H^1(\Omega^1_{\Pbb^2}(-i)) \to {H^1(\Omega^1_{\Pbb^2}(\log D)(-i))} \to H^1(D, \Ocal_D(-i)) \to H^2(\Omega^1_{\Pbb^2}(-i)) \to 0
	\]
	which implies
	\[
	h^1(\Omega^1_{\Pbb^2}(\log D)(-i)) = h^1(D, \Ocal_D(-i)) - h^2(\Omega^1_{\Pbb^2}(-i)).
	\]

	\begin{itemize}
		\item $h^1(D, \Ocal_D(-i))$: By Riemann-Roch
		\[
		h^1(D, \Ocal_D(-i)) = -\mathrm{deg}_D\Ocal_D(-i) - 1 + g(D) = 5i-1 + \frac{(5-1)(5-2)}{2} = 5(i+1).
		\]
		\item $h^2(\Omega^1_{\Pbb^2}(-i))$: By the Euler exact sequence
		\[
		0 \to \Omega^1_{\Pbb^2} \to \Ocal(-1)^{\oplus 3} \to \Ocal \to 0
		\]
		one obtains
		\[
		0=H^1(\Ocal(-i)) \to H^2(\Omega^1_{\Pbb^2}(-i)) \to H^2(\Ocal(-1-i)^{\oplus 3}) \to H^2(\Ocal(-i)) \cong H^0(O(i-3)) \to 0.
		\]
		Thus
		\[
		\begin{array}{ll}
			h^2(\Omega^1_{\Pbb^2}(-i)) &= 3 \cdot h^2(\Ocal(-1-i)) - h^0(\Ocal(i-3)) \\
			& = 3 \cdot h^0(\Ocal(i-2)) - h^0(\Ocal(i-3)).
		\end{array}
		\]
		In summary,
		\[
		h^1(\Omega^1_{\Pbb^2}(\log D)(-i)) = 5(i+1) - 3 \cdot h^0(\Ocal(i-2)) + h^0(\Ocal(i-3)).
		\]

		\begin{center}
			\begin{tabular}{ c| c| c|c }
				& $H^{2,0}$ & $H^{1,1}$ & $H^{0,2}$ \\
				\hline
				$i=1$ & 3 & 10 & 0 \\
				\hline
				$i=2$ & 1 & 12 & 0 \\
				\hline
				$i=3$ & 0 & 12 & 1 \\
				\hline
				$i=4$ & 0 & 10 & 3
			\end{tabular}
		\end{center}

	\end{itemize}


	Clearly the decomposition of $H^2\otimes\Cbb$ is compatible with the $F$-action, and each summand $H^2_i$ is defined over $F$. This is actually a Hodge structure with CM symmetry in the sense of \cite{geeman} (we avoid the name ``of CM type'' to prevent possible ambiguities): each $H^2_i$ is an $F$-vector space suporting a Hodge structure with CM symmetry by $F$, and is polarized by an Hermitian form of signature as indicated in the table. Moreover, the Galois action of $\Gal(F/\Qbb)$ permutes them transitively sending $H^2_\chi$ to $H^2_{{\chi\circ\tau}}$ etc.  so that $H^2_1\oplus H^2_4$ and $H^2_2\oplus H^2_3$ are both defined over $F^+$.

	The Mumford-Tate group $\Gbf_2$ preserves the symmetry by $F$ on $H^2$ as a Hodge structure, hence it also preserves the CM symmetry by $F$ of the natural $F$-structure on $H^2_i$. Thus $\Gbf_2^\der$ is a $\Qbb$-subgroup of $\Res_{F^+/\Qbb}\Hbf_2$, where $\Hbf_2$ is the $F^+$-group of automorphisms (of determinant 1) of an Hermitian form $h_2: F^{13}\times F^{13}\ra F$ descended from the polarizations on $H^2_i$.


	The equality $\End{\Gbf_2}{H^2}=F$ is similar to the case of $M_1$ because the four pieces $H^2_i$ are permuted transitively by $\Gal(F/\Qbb)$.
\end{proof}

\begin{remark}
(1)	In \cite{yu_zheng} Yu and Zheng studied half-twisted Hodge structures of ball type (i.e. whose period spaces are Hermitian symmetric domains of $\SU(1,n)$-type), and they conjectured that the period map should be an open embedding under suitable conditions. In our setting we expect that the derived Mumford-Tate group should be exactly $\Res_{F^+/\Qbb}\Hbf_2$ as in the lemma above, and the period map should be generically injective.

(2) The $\Qbb$-group $\Res_{F^+/\Qbb}$ in the lemma above is already of Hermitian type, in the sense that the associated period domain is of Hermitian type (i.e. of Hodge level 1 in the languageof Section 3). Thus the period domain of $M_2$ described by $\Gbf_2$ is also of Hermitian type, and the period map takes value in a Shimura variety.
\end{remark}

Note that $\End{\Gbf_1}{H^1}\otimes_\Qbb\End{\Gbf_2}{H^2}=F\otimes_\Qbb F$ is NOT a field, hence  $H^1\otimes H^2$ is NOT irreducible as a representation of the linear group $\Gbf_1\times\Gbf_2$ over $\Qbb$. The $\Qbb$-linear subspace $V$ of $\mu_5$-invariants in $H^1\otimes_\Qbb H^2$ naturally supports a $\Qbb$-HS, and we may characterize $V\otimes_\Qbb F$ as the $\mu_5$-invariant subspace in $$(H^1\otimes_\Qbb H^2)\otimes_\Qbb F=(\bigoplus_{i=1}^4H^1_i)\otimes_F(\bigoplus_{j=1}^4H^2_j).$$ Since $\mu_5$ acts on $H^1_i\otimes_FH^2_j$ by the character $\chi^{i+j}$, we obtain $$V\otimes_\Qbb F=H^1_1\otimes H^2_4\oplus H^1_4\otimes H^2_1\oplus H^1_2\otimes H^2_3\oplus H^1_3\otimes H^2_2,$$ and the action of $\tau$ on $(H^1\otimes_\Qbb H^2)\otimes_\Qbb F$ via $F$ induces a permutation $\tau^*$ on the direct summands: $$H^1_1\otimes H^2_4\overset{\tau^*}{\lra}H^1_2\otimes H^2_3\overset{\tau^*}{\lra}H^1_{4}\otimes H^2_1\overset{\tau^*}{\lra}H^1_3\otimes H^2_2\overset{\tau^*}{\lra}H^1_1\otimes H^2_4.$$ Since $\Gal(F/\Qbb)$ acts on these components transitively, and $(H^1_1\otimes_FH^2_4)\otimes_F\Cbb$ is irreducible as a $\Cbb$-linear representation of $U(3,0)\times U(3,10)$, we deduce that $V$ is irreducible as a representation of $\Gbf_1\times \Gbf_2$, and $\Gbf_1\times \Gbf_2$ is the Hodge group for $V$ as a $\Qbb$-HS. 

In particular, the $F$-subspaces $A=H^1_1\otimes H^2_4\oplus H^1_4\otimes H^2_1$ and $B=H^1_2\otimes H^2_3\oplus H^1_3\otimes H^2_2$ are both stable under the action of $\tau^2$, and descend to $F^+$, i.e. $A=A^+\otimes_{F^+}F$ and $B=B^+\otimes_{F^+}F$ for $F^+$-linear subspace $A^+=A^{\tau^2}$ and $B^+=B^{\tau^2}$. From $\tau^*(A)=B$ and $\tau^*(B)=A$ we deduce that neither $A^+$ nor $B^+$ descends further to $\Qbb$: rather we have $V\otimes_\Qbb F^+=A^+\oplus B^+$ with $A^+$ and $B^+$ permuted by the non-trivial automorphism of $F^+$ sending $t+t^4$ to $t^2+t^3$.

\subsubsection{Verification of unobstructedness and specialness}
Let $M:=\mathscr M_{5,3}$, the moduli space of smooth quintic hypersurfaces in $\mathbb{P}^4$. Choose a smooth compactification $\overline M\supset M$ with $D_\infty=\overline M\backslash M$. Let $(E,\theta)\supset(E,\theta)_0$ be the Deligne's canonical extension.
By (\ref{incl}), for any morphism $f:X\ra \overline M$, one has $$H^0(X,f^\ast T_{\overline M}(-\log D_\infty))\subset H^0(X,f^\ast (E^{0,3}\otimes E^{2,1}))\text{\ and}$$ $$\dim_\mathbb C H^0(X,f^\ast T_{\overline M}(-\log D_\infty))\leq \dim_\mathbb C H^0(X,f^\ast (E^{0,3}\otimes E^{2,1})).$$

It is shown in \cite{VZ-06} that the $\mathbb Q$-VHS decomposes as
\[
R^3h_*\mathbb Q_{\mathcal Z}= \mathbb V\;\oplus\; \mathbb T,
\]
where
\[
\mathbb V:=\bigl(R^1g_{1*}\mathbb Q_{Z_1}\boxtimes R^2g_{2*}\mathbb Q_{Z_2}\bigr)^{\mu_5}
\]
(the superscript $\mu_5$ denotes the $\mathbb Z_5$-invariant part) and $\mathbb T$ is a Tate twist of a weight-1 $\mathbb Q$-VHS arising from the birational modifications.\\[.1cm]

\paragraph{Hodge bundles on the product.}
Let $M_1$, $M_2$ be as in \S\ref{VZ-example-M_i}. Since $(E,\theta)$ is of Calabi-Yau type, by Proposition \ref{MT-M_1} and Proposition \ref{MT-M_2} we have
\begin{align*}
    E^{0,3}|_{\overline{M_1}\times \overline{M_2}}=(E^{0,1}_1)_2\boxtimes(E^{0,2}_2)_3,
\end{align*}
where $\overline{M_i}$ is the corresponding projective compactification.

Similarly, one has
\begin{align*}
     E^{2,1}|_{\overline{M_1}\times \overline{M_2}}=(E^{1,0}_1\boxtimes E^{1,1}_2)^{\mu_5}\oplus(E^{0,1}_1\boxtimes E^{2,0}_2)^{\mu_5}\oplus E^{2,1}_\mathbb T,
\end{align*}
where $(E_\mathbb T,\theta_\mathbb T)$ is the Hodge bundle associated with $\mathbb T$ and
\begin{align*}
    (E^{1,0}_1\boxtimes E^{1,1}_2)^{\mu_5}=&(E_1^{1,0})_1\boxtimes(E_2^{1,1})_4\oplus (E_1^{1,0})_2\boxtimes(E_2^{1,1})_3\oplus (E_1^{1,0})_3\boxtimes(E_2^{1,1})_2;\\
    (E^{0,1}_1\boxtimes &E^{2,0}_2)^{\mu_5}=(E_1^{0,1})_3\boxtimes(E_2^{2,0})_2\oplus (E_1^{0,1})_4\boxtimes(E_2^{2,0})_1.
\end{align*}

Now, take a general point $t\in M_2$, and consider the log morphism
\begin{align*}
    \varphi_t:=\varphi|_{\overline{M_1}\times\{t\}}:(\overline{M_1},D_1)\ra (\overline M,D_\infty).
\end{align*}
Then we have
\begin{align*}
    T_{[\varphi_t]}\Hom((\overline{M_1},D_1),(\overline M,D_\infty))=&H^0(\overline{M_1},\varphi_t^\ast T_{\overline M}(-\log D_\infty))\\
    \subset &H^0(\overline{M_1},(E^{0,3}\otimes E^{2,1})|_{\overline{M_1}\times\{t\}}).
\end{align*}
Note that $E^{0,3}|_{\overline{M_1}\times\{t\}}=(E_1^{0,1})_2=(E^{1,0}_1)_3^\ast$,
and by Proposition \ref{MT-M_1} and Proposition \ref{MT-M_2}, we have
\begin{align*}
      (E^{1,0}_1\boxtimes E^{1,1}_2)^{\mu_5}|_{\overline{M_1}\times\{t\}}=&(E_1^{1,0})_1^{\oplus10}\oplus (E_1^{1,0})_2^{\oplus12}\oplus (E_1^{1,0})_3^{\oplus12};\\
    (E^{0,1}_1\boxtimes E^{2,0}_2)^{\mu_5}&|_{\overline{M_1}\times\{t\}}=(E_1^{0,1})_3\oplus (E_1^{0,1})_4^{\oplus3}.
\end{align*}

\paragraph{Irreducibility of the factors.}
\begin{proposition}\label{prop:irred}
All local systems $(R^1g_{1*}\mathbb Q_{Z_1})_i$, $(R^2g_{2*}\mathbb Q_{Z_2})_i$ ($i=1,2,3,4$) and $\mathbb T$ are irreducible and have pairwise different ranks.
Consequently, by Schur's lemma and Simpson's correspondence,
\[
\Hom\bigl((E_1)_i,(E_1)_j\bigr)=0\ \text{for }i\neq j,\qquad
\Hom(E_1,E_{\mathbb T})=0.
\]
\end{proposition}
\begin{proof}
The irreducibility and distinctness of ranks follow from [VZ, Lemma~4.1] together with Propositions~\ref{MT-M_1} and~\ref{MT-M_2}.  The vanishing of Homs then follows from Schur's lemma and Simpson's correspondence {\cite[Theorem~1 and Lemma~3.5]{Simp}}.
\end{proof}

\paragraph{Unobstructedness for the first factor.}
Fix a general point $t\in M_2$ and consider the restricted log map
\[
\varphi_t:=\varphi|_{\overline M_1\times\{t\}} : (\overline M_1,D_1)\longrightarrow(\overline M,D_\infty).
\]
By the above,
\[
T_{[\varphi_t]}\Hom\bigl((\overline M_1,D_1),(\overline M,D_\infty)\bigr)
\;\subseteq\; H^0\!\bigl(\overline M_1,\;(E^{0,3}\otimes E^{2,1})|_{\overline M_1\times\{t\}}\bigr).
\]
Now $E^{0,3}|_{\overline M_1\times\{t\}}=(E_1^{0,1})_2=(E_1^{1,0})_3^\vee$, and using the decomposition of $E^{2,1}$ we obtain
\[
\begin{aligned}
(E^{0,3}\otimes E^{2,1})|_{\overline M_1\times\{t\}}
=&\ (E_1^{1,0})_3^\vee\otimes\Bigl((E_1^{1,0})_1^{\oplus10}\oplus(E_1^{1,0})_2^{\oplus12}\oplus(E_1^{1,0})_3^{\oplus12}\Bigr)\\
&\ \oplus\ (E_1^{1,0})_3^\vee\otimes\Bigl((E_1^{0,1})_3\oplus(E_1^{0,1})_4^{\oplus3}\Bigr).
\end{aligned}
\]

Applying Proposition~\ref{prop:irred}, the only non-zero Homs occur between identical factors, hence
\[
H^0\!\bigl(\overline M_1,\;(E^{0,3}\otimes E^{2,1})|_{\overline M_1\times\{t\}}\bigr)
\;\cong\; \Hom\bigl((E_1^{1,0})_3,\,(E_1^{1,0})_3^{\oplus12}\bigr)
\;\cong\; \mathbb C^{12}.
\]
(The term $\Hom((E_1^{1,0})_3,(E_1^{0,1})_3)$ vanishes because it would give a section of $T_{\overline M_1}(-\log D_1)$, and $M_1$ is of ball-quotient type.)
Thus
\[
\dim_{\mathbb C}H^0\!\bigl(\overline M_1,\;(E^{0,3}\otimes E^{2,1})|_{\overline M_1\times\{t\}}\bigr)=12=\dim M_1,
\]
which establishes the unobstructedness of $\varphi_t$.\\[.1cm]

\paragraph{Unobstructedness for the second factor.}
Now fix a general point $s\in M_1$ and consider
\[
\varphi_s:=\varphi|_{\{s\}\times\overline M_2}:(\overline M_2,D_2)\longrightarrow(\overline M,D_\infty).
\]
We have $E^{0,3}|_{\{s\}\times\overline M_2}=(E_2^{0,2})_3\cong(E_2^{2,0})_2^\vee$, and
\[
\begin{aligned}
(E^{2,1})|_{\{s\}\times\overline M_2}
=&\ (E_2^{1,1})_4^{\oplus3}\oplus(E_2^{1,1})_3^{\oplus2}\oplus(E_2^{1,1})_2\\
&\ \oplus\ (E_2^{2,0})_2^{\oplus2}\oplus(E_2^{2,0})_1^{\oplus3}.
\end{aligned}
\]

Again by Proposition~\ref{prop:irred},
\[
H^0\!\bigl(\overline M_2,\;(E^{0,3}\otimes E^{2,1})|_{\{s\}\times\overline M_2}\bigr)
\;\cong\; \Hom\bigl((E_2^{2,0})_2,\,(E_2^{1,1})_2\bigr)
\;\oplus\; \Hom\bigl((E_2^{2,0})_2,\,(E_2^{2,0})_2^{\oplus2}\bigr),
\]
and the second summand is $\mathbb C^{2}$.

\begin{claim}
$\Hom\bigl((E_2^{2,0})_2,\,(E_2^{1,1})_2\bigr)=0$.
\end{claim}
\begin{proof}
Consider the natural embedding $\iota:M_1\hookrightarrow M_2$ induced by the cyclic cover construction \cite{VZ}.
Because $T_{\overline M}(-\log D_\infty)|_{M_2}$ is semi-negative, the restriction map
\[
H^0\!\bigl(\overline M_2,\;T_{\overline M}(-\log D_\infty)|_{M_2}\bigr)
\;\hookrightarrow\; H^0\!\bigl(\overline M_2,\;\iota^*T_{\overline M}(-\log D_\infty)|_{M_2}\bigr)
\]
is injective.  Hence it suffices to show
\[
\Hom\bigl(\iota^*(E_2^{2,0})_2,\;\iota^*(E_2^{1,1})_2\bigr)=0.
\]

On the smooth part $M_2$ there is a Kodaira-Spencer isomorphism
\[
\delta:T_{M_2}\;\stackrel{\sim}{\longrightarrow}\; (E_2^{2,0})_2^\vee\otimes(E_2^{1,1})_2
\]
associated to the factor $(R^2g_{2*}\mathbb Q_{Z_2})_2$.  Although $\delta$ does not extend to an isomorphism of the logarithmic bundles, its restriction to $M_1$ does extend over a complete curve $H\subset\overline M_1$ that avoids the boundary $D_1$ (such a curve exists because $M_1$ is a ball-quotient surface whose Baily-Borel compactification has only finitely many cusps).

Over $H\subset M_1$ we therefore have
\[
\Hom\bigl(\iota^*(E_2^{2,0})_2,\;\iota^*(E_2^{1,1})_2\bigr)|_H
\;\cong\; H^0\!\bigl(H,\;\iota^*( (E_2^{2,0})_2^\vee\otimes(E_2^{1,1})_2)|_H\bigr)
\;\cong\; H^0\!\bigl(H,\;\iota^*T_{M_2}|_H\bigr)=0,
\]
since $\iota^*T_{M_2}|_H$ is strictly negative.  This proves the claim.
\end{proof}

Thus
\[
\dim_{\mathbb C}H^0\!\bigl(\overline M_2,\;(E^{0,3}\otimes E^{2,1})|_{\{s\}\times\overline M_2}\bigr)=2=\dim M_1,
\]
establishing unobstructedness for $\varphi_s$.\\[.1cm]

\paragraph{Conclusion.}
Both restricted maps $\varphi_t$ and $\varphi_s$ satisfy the unobstructedness condition of Conjecture~\ref{unob-conj}.  By Proposition~\ref{spe}, the subvariety $(H_1,S_1)\times(H_2,S_2)$ is therefore special with respect to the $\mathbb Z$-PVHS $R^3h_*\mathbb Z_{\mathcal Z}$.  Hence the Viehweg-Zuo example provides a concrete instance where the specialness of bi-Hom schemes holds.


\end{document}